\documentclass[a4paper, 11pt]{amsart}

\usepackage[utf8]{inputenx}
\usepackage[english]{babel}

\usepackage{enumerate}
\usepackage{amsthm}
\usepackage{amsmath}
\usepackage{amssymb}
\usepackage{scalerel}
\usepackage{mathtools}
\usepackage{bbm}
\usepackage{cases}
\usepackage{tikz}
\usepackage{IEEEtrantools}
\usepackage{hyperref}
\usepackage{graphicx}
\usepackage{emptypage}
\usepackage{color}
\usepackage{thmtools}
\usepackage{thm-restate}
\usepackage{todonotes}

\usepackage{comment}

\newtheorem*{thm*}{Theorem}
\newtheorem{thm}{Theorem}[section]

\newtheorem{lemma}[thm]{Lemma}
\newtheorem{prop}[thm]{Proposition}
\newtheorem{corollary}[thm]{Corollary}
\newtheorem{conjecture}[thm]{Conjecture}

\theoremstyle{definition}

\newtheorem{remark}[thm]{Remark}
\newtheorem{example}[thm]{Example}

\numberwithin{equation}{section}

\newenvironment{ieee*}[1]{\begin{IEEEeqnarray*}{#1}}{\end{IEEEeqnarray*}\ignorespacesafterend}
\newcommand{\f}{\varphi}
\newcommand{\la}{\lambda}

\newcommand{\med}{\medskip\noindent}

\newcommand{\ie}{\emph{i.e.}}

\newcommand{\R}{\mathbb{R}}
\newcommand{\Z}{\mathbb{Z}}
\newcommand{\N}{\mathbb{N}}

\newcommand{\K}{\mathcal{K}}

\newcommand{\One}{\mathbbm{1}}
\newcommand{\Prob}{\mathcal{P}}
\newcommand{\Meas}{\mathcal{M}}

\newcommand{\eps}{\varepsilon}
\newcommand{\wconv}{\rightharpoonup}
\newcommand{\Gconv}{\xrightarrow{\Gamma}}

\newcommand{\Rdn}{(\R^d)^N}

\newcommand{\eqdef}{\vcentcolon=}

\newcommand{\gra}[1]{\left \{ {#1} \right \}}
\newcommand{\st}{\;\colon}
\newcommand{\abs}[1]{\left| {#1} \right|}
\newcommand{\norm}[1]{\left\lVert {#1} \right\rVert}
\newcommand{\bra}[1]{\left\langle {#1} \right\rangle}
\newcommand{\pa}[1]{\left( {#1} \right)}
\newcommand{\floor}[1]{\lfloor {#1} \rfloor}
\newcommand{\ceil}[1]{\lceil {#1} \rceil}
\newcommand{\restr}{\lfloor}

\def\e{\varepsilon}
\def\O{\Omega}
\def\Ob{\overline{\Omega}}
\def\S{\mathcal{S}}
\def\L{\mathcal{L}}
\def\I{\mathcal{I}}
\def\ov{\overline}
\def\d{\delta}
\def\a{\alpha}

\newcommand{\res}{\mathbin{\vrule height 1.6ex depth 0pt width 0.13ex\vrule height 0.13ex depth 0pt width 1.3ex}}

\DeclareMathOperator{\supp}{supp}
\DeclareMathOperator{\cl}{cl}
\DeclareMathOperator{\co}{co}
\DeclareMathOperator{\Sym}{Sym}
\DeclareMathOperator*{\Glim}{\Gamma -lim}

\title{Relaxed many-body optimal transport and related asymptotics}

\begin{document}

\author{Ugo Bindini}
\address{Scuola Normale Superiore \\ Piazza dei Cavalieri, 7 \\ 56126 Pisa - ITALY}
\email{ugo.bindini@sns.it}

\date{\today}
\author{Guy Bouchitt\'e}
\address{ Imath \\ Universit\'e de Toulon, BP 20132\\  83957 La Garde Cedex- FRANCE}
\email{bouchitte@univ-tln.fr}

	\begin{abstract}  Optimization problems on probability measures in $\R^d$ are considered where
	the cost functional involves  multi-marginal optimal transport.  In a model of $N$  interacting particles, 
	like in Density Functional Theory, the interaction cost is repulsive and described by a two-point function 
	$c(x,y) =\ell(|x-y|)$ where $\ell: \R_+ \to [0,\infty]$ is decreasing to zero at infinity. Due to a possible loss of mass at infinity, non existence may occur and  relaxing the initial problem over  sub-probabilities becomes necessary.  
	In this paper we characterize the relaxed functional generalizing the results of \cite{bouchitte2020relaxed} and  
	present a duality method which allows to compute the $\Gamma-$limit as $N\to\infty$ under very general assumptions on the cost $\ell(r)$. We show that this  limit coincides with the convex hull of the so-called direct energy. Then we study the limit optimization problem when a continuous external potential is applied. Conditions are given with explicit examples under which minimizers are probabilities or have a mass $<1$ . 
	In a last part we study the case of a small range interaction $\ell_N(r)=\ell (r/\e)$ ($\e\ll 1$) and we show how the duality approach  can be  also used to determine the limit energy as $\e\to 0$ of a very large number $N_\e$ of particles. 	 	
	  
\end{abstract}

\maketitle

\textbf{Keywords: }  N-body optimal transport, infinite-body optimal transport, relaxation and $\Gamma$-convergence, optimization on sub-probabilities, mean-field limits.

\textbf{ Mathematics Subject Classification:}  
  35R06, 49J45, 49K20, 49N15, 49K30, 60B10

\maketitle
\section{Introduction}

Given a probability measure $\rho \in \Prob\left( \R^d \right)$ and $N \in \N$, $N \geq 2$, we consider the multi-marginal Optimal Transport (OT) problem defined by

\begin{equation} \label{eq:OTproblem}
  C_N(\rho) = \inf \gra{\int c_N(x_1, \dotsc, x_N) dP(x_1, \dotsc, x_N) \st P \in \Pi_N(\rho)}
\end{equation}
where $\Pi_N(\rho)$ denotes the set of multi-marginal transport plans 
\[ \Pi_N(\rho) \eqdef \gra{P \in \Prob\left( \Rdn \right) \st (\pi_j)_{\#}(P) = \rho \;\forall j = 1, \dotsc, N }. \]
being $\pi_j$ the projections from $\R^{Nd}$ on the $j$-th factor $\R^d$ and ${\pi_j}^\#$ the push-forward operator
\[ {\pi_j}^\#P(E)=P\big(\pi_j^{-1}(E)\big)\qquad\hbox{for all Borel sets }E\subset\R^d. \]

As a cost function $c_N$ we consider a two-particle interaction of the form
\begin{equation}\label{def:c_N}
  c_N(x_1, \dotsc, x_N) = \frac{2}{N(N-1)} \sum_{1\le i<j\le N} \ell(\abs{x_i-x_j})
\end{equation}
where $\ell \colon [0,+\infty] \to \R\cup \{+\infty\}$ satisfies the following standing assumptions:
\begin{enumerate}
  \item [(H1)]\  $\ell(r) \geq 0$ and $\ell(0)>0$;
  \item  [(H2)]\  $\ell$ is lower semi-continuous;
  \item [(H3)]\   $\displaystyle \lim_{r \to +\infty} \ell(r) = 0$;
\end{enumerate}
In addition to the standing assumptions, we will sometimes assume that
\begin{enumerate}
\item [(H4)] $\ell$ is locally integrable on $\R^d$, \ie, $\displaystyle \int_{\gra{|z|\le R}} \ell(\abs{z}) dz < +\infty, \forall R>0$; or
\item [(H5)] $\ell(0) < +\infty$ and $\ell$ is positive semi-definite in the following sense: for every $m\in \N^{*}$ and every subset $\gra{x_1, x_2, \dotsc, x_m} \subset (\R^d)^m$,
\[ \sum_{i,j=1}^m  \ell(|x_i-x_j|) t_i t_j \ge 0 \quad \forall t_1, t_2, \dotsc, t_m \in \R. \]
\end{enumerate}
Note that, by applying above with $m=2$, we get $\displaystyle\sup \ell = \ell(0) < +\infty$; hence (H5) implies (H4) while, as well known, the Fourier transform  of $\ell$ satisfies $\hat \ell\ge 0$ on $\R^d$. 

\medskip
It is common in many applications as in Density Functional Theory, crowd motion models and statistics, to encounter minimum problems of the form
\[ \inf_{\rho \in \Prob(\R^d)} \gra{C_N(\rho) + \mathcal{F}(\rho)}, \]
where $\mathcal{F}$ is a suitable density functional. In this context it is important to understand the behaviour of this value and the structure of the minimizers for a large number $N$ of particles/people, as this can be used to approximate the behaviour of large systems which are often out of reach  through numerical methods.

The first step in order to treat rigorously these instances is to understand the limit as $N \to \infty$ of the multi-marginal OT functional. In this setting, a natural tool is the notion of $\Gamma$-convergence with respect to the weak* topology of Radon measures on $\R^d$.  In particular if 
$\displaystyle C_\infty \eqdef \Glim_{n \to \infty} C_N$ exists and can be identified,
it will possible to pass to the limit in minimum problems of the kind  $\inf_{\rho} \gra{C_N(\rho) + \mathcal{F}(\rho)}$.  In particular, if $\mathcal{F}$ is  weakly continuous, by applying a celebrated theorem of De Giorgi, we will obtain the convergence of the infima
\[ \lim_{n \to \infty} \inf_{\rho} \gra{C_N(\rho) + \mathcal{F}(\rho)} = \min_{\rho} \gra{C_\infty(\rho) + \mathcal{F}(\rho)} \]
and the weak* convergence of minimizing sequences in $\Prob\left( \R^d \right)$ to  minimizers of the functional $ C_\infty + \mathcal{F}.$
However, as far as the minimum problem involves measures on the whole space $\R^d$,  this result  applies only in the case where  minimizing sequences are tight (\ie\   do not 
undergo a loss of mass at infinity).  Actually  such a condition rules out  very simple cases, as 
for instance when  $\mathcal{F}$ a linear functional of the form $\mathcal{F}(\rho)= \int v\, d\rho$ being $v$ a continuous potential with compact support.    
To overcome this difficulty,  we need to extend the $N$-particles problem and its limit $N\to\infty$ on the larger space $\Prob_-(\R^d)$ consisting of all sub-probabilities. This requires  a relaxion procedure
 for  the multi-marginal OT cost in the same line as in \cite{bouchitte2020relaxed}.

%

\medskip

The contributions of this  paper are  presented according to the following plan:

\medskip
In Section \ref{relaxation-section}, we extend to general costs $\ell$ the relaxation and duality framework recently developed in the case of the Coulomb interaction energy (see \cite{bouchitte2020relaxed}). As an application, assuming that $\ell(0) < +\infty$, we derive a recipe for computing the relaxed energy $\overline{C_N}(\rho)$ for $\rho$ being a finitely supported in $ \Prob_-(\R^d)$.  Explicit expressions are given 
when $\rho$ is a combination of two Dirac masses 

\medskip 
In Section \ref{sec:limits}, we  prove the $\Gamma$-convergence of  $C_N$ as $N\to\infty$ and provide a characterization of the limit functional $C_\infty: \Prob_-(\R^d)\to [0,+\infty]$. This representation
relies on a duality argument allowing to compute the Legendre-Fenchel conjugate of $C_\infty$ as a functional on $C_0(\R^d)$. This result stated in \autoref{MIformula} is the natural generalization of \cite{choquet1958diametre} for general two-point particle interaction.
We remark that the pointwise convergence of $C_N(\rho)$ for $\rho$ being a given probability was studied by B. Pass et al. in \cite{cotar2015infinite} \cite{pass2013optimal}  in  case of a positive definite cost function $\ell$. We refer also to the seminal work by G. Choquet in 1958  \cite{choquet1958diametre} and 
to recent works devoted to the next order asymptotics $C_N(\rho)$ \cite{petrache2017next,Serfaty2015, serfaty2018systems}. 
In all these works the simple limit on $\Prob(\R^d)$ is identified as the so called direct energy 
$$  D(\rho) = \int \ell(\abs{x-y}) d\rho(x) d\rho(y) ,$$
which, in the Coulomb's case ($d=3$ and $\ell(t)= t^{-1}$), represents the potential energy due to the self-interaction of the density of charge  $\rho$ with itself.

\medskip
In Section \ref{sec:direct-energy}, we tackle the natural question of the relation between the $\Gamma-$limit $C_\infty$,  the two-homogeneous extension $D_2$ of $D$ to $\Prob_-(\R^d)$  (see \eqref{Dalpha}) and the lower semicontinuous envelope $\ov{D}$ of $D$ (see \eqref{relaxD}). Under the general  assumptions on $\ell$ given above, we prove that $C_\infty$ agrees with the convex lower semicontinuous envelope  of $D_2$. In particular, if $D_2$ is convex,  we recover the equalities  $C_\infty= D_2= \ov{D} $ which are consistent with the common case where  $\ell$ is of positive type.

\medskip
In Section \ref{minimizers}, we specialize in the minimum problem
$$\inf_{\rho \in \Prob_-(\R^d)} \gra{C_\infty(\rho) -\lambda \bra{v,\rho}}\ ,$$
 where  $v\in C_0(\R^d)$ is a given external potential and $\lambda$  a positive parameter.
 Denoting by $\S_\la(v)$  the non empty set of solutions,
  we show the existence of threshold values $\la_*(v)\le \la^*(v)<+\infty$ such that $\S_\la(v)\subset \Prob(\R^d)$
 for $\la\ge \la^*(v)$, while $\S_\la(v)\cap\Prob(\R^d)$ is empty for $0\le \la <\la_*(v)$. An estimate of these tresholds in terms of the behavior of $v$ at infinity is provided in Subsection \ref{estimates} allowing to state that $\la_*(v) >0$ in many situations including the case of compactly supported $v$. On the other hand, for $\ell$ of positive type, we have  $C_\infty= D_2$ while the solution is unique \ie\ $\S_\la(v)=\{\rho_\la\}$ and $\la_*(v)=\la^*(v)$. In this case, the behavior
 of the map $\la \mapsto \rho_\la$ is shown to be linear below the common threshold. 
 These results are illustrated in the last Subsection \ref{radial} where  explicit solutions are given in dimension $d=3$ for a radial potential and $\ell(r)= \frac1{r}$ being the Coulomb cost. 
 
  

\medskip
In  Section \ref{crowd}, we investigate the possibility of extending our duality method to the case of a  varying cost $\ell_N$ which depends on a infinitesimal interaction distance parameter $\e_N$. This is motivated by the justification of the  passage from  discrete to continuous  in some multi-particle models  (mean-field limit),   as those that are used  for instance in crowd motion theory (see \cite{maury2018congested}).  In case of a hard spheres model, we are able to establish a complete asymptotic result which to our knowledge is new.

\medskip
Eventually  we postpone  to the Appendix some background on duality, convex analysis and $\Gamma$-convergence theory.
 
\subsection*{Notations}  Throughout the paper we will use the following notations: 
\setlength{\leftmargini}{6pt}
\begin{enumerate}

\item [-] $B(x,r)$ is the open ball of the Euclidean space $\R^d$ centered at $x$ and of radius $r$ ;

\item [-]  $C_0(\R^d)$ denotes the Banach space of continuous functions on $\R^d$ vanishing at infinity, $C_0^+(\R^d)$  the subspace of non negative ones. 
 
\item [-] $\Meas(\R^d)$ (resp.$\Meas_+(\R^d)$) stands for the space of signed Borel (resp. nonnegative measures) on $\R^d$;   $\Prob_-(\R^d)$ (resp. $\Prob(\R^d)$) is the subset of Borel measures $\mu\in \Meas_+(\R^d)$ such that $\|\mu\| := \mu(\R^d) \leq 1$ (resp. $\|\mu\| = 1$).

\item[-] Given $\mu \in \Meas(\R^d)$ and $h \in \R^d$,  $\tau_h \mu$ be the translation of $\mu$ by the vector $h$, \ie, $\tau_h \mu(E) = \mu(E-h)$ for every Borel set $E$.

\item [-] The bracket $\bra{\cdot, \cdot}$ will denote the duality between $C_0(\R^d)$ and $\Meas(\R^d)$:
\[ \bra{v, \mu} = \int v d\mu , \]
This duality pairing naturally induces the weak* topology on $\Meas(\R^d)$.

\item [-]  The topological support of $\mu \in \Meas_+(\R^d)$ is denoted  $\supp(\mu)$ 
while $\mu\res A$; represents its  trace on a Borel subset $A\subset \R^d$;   

\item [-]  If $v \in C_0(\R^d)$, we define $S_N v \colon \Rdn \to \R$ as
\[ S_N v(x_1, \dotsc, x_N) = \frac{1}{N} \sum_{j = 1}^N v(x_j) \]

\item [-] For a measure $\mu \in \Meas(\Rdn)$, we denote by $\Sym(\mu)$ its symmetrization, given by
\[ \Sym(\mu)(E) = \frac{1}{N!} \sum_{\sigma \in \mathfrak{S}_N} \mu(\sigma(E)), \]
where $\sigma(E) = \gra{(x_1, \dotsc, x_N) \st (x_{\sigma(1)}, \dotsc, x_{\sigma(N)}) \in E}$ for a permutation $\sigma \in \mathfrak{S}_N$.

\item[-]  $\Prob(\Prob_-(\R^d))$ denotes the set of Borel probabilities measures on $\Prob_-(\R^d)$ (seen as 
a weakly* compact metrizable space).

\end{enumerate}

\subsection*{Acknowledgements} The first author is grateful to the financial support of {\nobreakdash INdAM} (Istituto Nazionale d'Alta Matematica), via the LIA LYSM project.




\section{Relaxed multimarginal energy} \label{relaxation-section}

Let $\ell$ be a cost satisfying the standing assumptions. Then it is straightforward to check that the functional  $C_N \colon \Prob(\R^d) \to [0,+\infty]$ is convex, proper and lower semi-continuous on $\Prob(\R^d)$ endowed with the weak* topology. However, the latter property is not very useful in general when dealing with a sequence $(\rho_n)$ such that $\sup_n C_N(\rho_n) < +\infty$. Indeed, such a sequence may show-up a loss of mass at infinity and then be weakly converging to a sub-probability.

Following the direct method of calculus of variations, it is then natural to extend the definition of the multi-marginal energy to elements $\rho\in \Prob_-(\R^d)$ by introducing the relaxed cost:
\[ \ov{C_N}(\rho) = \inf \gra{ \liminf_n C_N(\rho_n) \st \rho_n \wconv \rho ,\ \rho_n \in \Prob(\R^d) } \]
Several characterizations of $\ov{C_N}(\rho)$ are available. Two of them, relying on a direct approach, are given in the next subsection
and will be used for some explicit computations (\autoref{Dirac}). A third one, very useful for studying the limit behavior of $C_N$ as $N\to\infty$, relies on duality theory and will be presented in Section \ref{sec:limits}.
%
%
%
\subsection{Two characterizations of \texorpdfstring{$\ov{C_N}$}{CN}} \label{2formulae}

A direct approach for characterizing $\overline{C_N}$ consists 
in embedding the elements $\rho\in\Prob_-$ as probabilities 
$\tilde{\rho} = i^\sharp(\rho) + (1 - \abs{\rho}) \delta_\omega$ over the Alexandrov's compactification 
$X = \R^d \cup \gra{\omega}$ of $\R^d$, where $\omega$ is the point at infinity 
and $i \colon x \mapsto x$ the identity embedding of $\R^d$ into $X$. The cost $c_N$ is then extended to $X^N$
by setting $\ell(\abs{a-b}) = 0$ whenever $a$ or $b$ equals $\omega$. Note that, with  this convention, 
the extension (still denoted $c_N$) is lower semi-continuous on $X$ thanks to (H3).
With these notations, a first expression for the relaxed cost $\ov{C_N}(\rho)$ in terms of $\tilde{\rho} = i^\sharp(\rho) + (1 -\|\rho\|) \delta_\omega$ can be derived similarly as in \cite[Proposition 2.2]{bouchitte2020relaxed}:

\begin{equation} \label{defCtild}
\ov{C_N}(\rho) \eqdef \min \gra{ \int_{X^N} c_N dP \st P \in \Prob(X^N), P \in \Pi(\tilde{\rho})}.
\end{equation}

Note that the existence of a minimum in \eqref{defCtild} follows from the lower semi-continuity of the map $P \mapsto \int_{X^N} c_N dP$ and of the compactness of $\Pi(\tilde{\rho})$ with respect to the narrow convergence on $\Prob(X^N)$.

Next considering \eqref{defCtild} and splitting the contributions of $\int_{X^N} c_N dP$ on each set of the form $(\R^d)^k \times {\omega}^{N-k}$  (see the proof of \cite[Theorem 2.3]{bouchitte2020relaxed}), we are led to  a second characterization of $\ov{C_N}$ namely:
\begin{equation} \label{relaxed-CN}
\overline{C_N}(\rho) = \inf_{\substack{a_1, \dotsc, a_N \geq 0 \\ \rho_1, \dotsc, \rho_N\in \Prob(\R^d)}} \gra{\sum_{i = 2}^N \frac{i(i-1)}{N(N-1)} a_i C_i(\rho_i) \st \sum_{i = 1}^N a_i \leq 1, \sum_{i = 1}^N \frac{i}{N} a_i \rho_i = \rho}.
\end{equation}

Let us mention that the expression above, which we call {\em stratification formula}, has, in a different context, some relationship with the grand-canonical formulation of optimal transport as it appears in \cite{cotar2019next}.
  
It is easy to check that the infimum in \eqref{relaxed-CN} is attained. Moreover, if $\|\rho\| \leq \frac{1}{N}$, we 
get $\overline{C_N}(\rho) = 0$ by choosing $a_1 = 1$, $a_2 = \dotsb = a_N = 0$ while, if $\rho \in \Prob(\R^d)$, the only possible choice $a_1 = \dotsb = a_{N-1} = 0$, $a_N = 1$ yields $\overline{C_N}(\rho) = C_N(\rho)$. More generally, the case of a fractional mass $\|\rho\| = \frac{K}{N}$ is interesting in the study of molecular structures, as it encodes a ionization phenomenon where exactly $K$ electrons among $N$ stay at finite distance, while the others $N-K$ go away to infinity. In this case, by taking all $a_i$ vanishing for $i \neq K$ and $a_K = 1$, we obtain the upper bound
\begin{equation}\label{trueineq}
 \overline{C_N}(\rho) \le \frac{K(K-1)}{N(N-1)} C_K \pa{\frac{N}{K} \rho}, \quad  \text{whenever $\|\rho\| = \frac{K}{N}$}.
\end{equation}

In view of the explicit formula given in \autoref{two-dirac}, it turns out that the inequality above is in fact an equality when $\rho$ is a combination of two Dirac masses and $\ell(0) < +\infty$. We expect that the equality holds true also in a larger class of $\rho$  under suitable conditions on the cost function $\ell$. For further comments and a conjecture related to this issue, we refer to \autoref{conj}.

\subsection{The vanishing gap conjecture} \label{conj}

Let us fix $\rho \in \Prob(\R^d)$ and $\theta \in [0,1]$, and apply \eqref{relaxed-CN} to the evaluation of $\overline{C_N}(\theta\rho).$
In an optimal choice $a_1, \dotsc, a_N$ there will be a minimum and a maximum index $i$ such that $a_i > 0$. Among all possible optimal choices, we select the one for which the difference between maximum and minimum index is the lowest, and we denote those indices respectively by $\overline{K}(N,\theta\rho)$ and $\underline{K}(N,\theta\rho)$. Observe that
\[ \sum_{i = 1}^N \frac{i}{N} a_i \rho_i = \theta\rho \implies \sum_{i = 1}^N i a_i = \theta N, \]
hence necessarily $\underline{K}(N,\theta\rho) \leq \theta N \leq \overline{K}(N,\theta\rho)$.

\begin{conjecture} \label{k-conjecture} The gap between $\underline{K}(N,\theta\rho)$ and $\overline{K}(N,\theta\rho)$ vanishes (with respect to $N$) as $N \to \infty$, \ie,
	\begin{equation}\label{gapvanish}
\limsup_{N \to \infty} \frac{\underline{K}(N, \theta\rho)}{N} = \liminf_{N \to \infty} \frac{\overline{K}(N, \theta\rho)}{N} = \theta. \end{equation}

\end{conjecture}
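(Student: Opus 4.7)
The plan is to reduce the discrete stratification infimum \eqref{relaxed-CN} to an asymptotic continuum optimization on $[0,1]$ and to exploit the strict convexity of $s \mapsto s^2$ to force concentration of mass near $s = \theta$.

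\medskip

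First, I would reparametrize: setting $s_i = i/N$ and $t_i = s_i a_i$, the constraints $\sum_i a_i \leq 1$ and $\sum_i s_i a_i \rho_i = \theta \rho$ become $\sum_i t_i/s_i \leq 1$ and $\sum_i t_i \rho_i = \theta \rho$, while the cost in \eqref{relaxed-CN} rewrites as $\sum_{i\geq 2} \frac{i-1}{N-1} t_i C_i(\rho_i)$. Using the $\Gamma$-convergence $C_N \Gconv C_\infty$ established in \autoref{sec:limits} together with the identification of $C_\infty$ with the convex envelope of $D_2$ from \autoref{sec:direct-energy}, the cost, in the regime $i,N \to \infty$ with $i/N \to s$, is well approximated by $\sum s_i^2 a_i C_\infty(\rho_i)$.

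\medskip

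Next, I would encode a candidate $(a_i, \rho_i)$ as the sub-probability $\mu = \sum_i a_i \delta_{s_i}$ on $[0,1]$ together with its disintegration $(\rho_s)_s$, and study the continuum relaxation: minimize $\int_0^1 s^2 C_\infty(\rho_s)\, d\mu(s)$ subject to $\int_0^1 s\,\rho_s \,d\mu(s) = \theta\rho$ and $\mu([0,1]) \leq 1$. In the model positive-type case, where $C_\infty = D_2 = D$, restricting to $\rho_s \equiv \rho$ reduces the problem to minimizing $\int s^2 d\mu$ over sub-probabilities $\mu$ on $[0,1]$ with first moment $\theta$. By Cauchy--Schwarz, $\int s^2 d\mu \geq \theta^2/\mu([0,1]) \geq \theta^2$ with equality if and only if $\mu = \delta_\theta$; more quantitatively, for a two-point measure of mass one supported on $\{s_1, s_2\}$ with $s_1 \leq \theta \leq s_2$,
\begin{equation*}
\int s^2\, d\mu - \theta^2 = (\theta - s_1)(s_2 - \theta),
\end{equation*}
so any spread in the support of $\mu$ produces a strictly positive excess.

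\medskip

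Finally, I would transfer back to the discrete problem: for a sequence $(a_i^{(N)}, \rho_i^{(N)})$ achieving the infimum in \eqref{relaxed-CN} under the tightest-gap selection, the measures $\mu_N = \sum_i a_i^{(N)} \delta_{i/N}$ admit subsequential weak* limits $\mu_\infty$ with first moment $\theta$, and the $\Gamma$-liminf inequality on the cost forces $\mu_\infty$ to realize the continuum optimum; by the previous step, $\mu_\infty = \delta_\theta$, yielding $\underline{K}(N,\theta\rho)/N,\ \overline{K}(N,\theta\rho)/N \to \theta$. I see three principal obstacles. First, the discrete-to-continuum transfer requires a form of $\Gamma$-convergence that is uniform in the stratum index $i$ and well behaved for pathological $\rho_i$ (e.g.\ atomic measures on which $C_i$ and $D$ can differ sharply when $\ell(0)$ is large or infinite). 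Second, outside the positive-type setting $C_\infty$ is only the convex envelope $\ov{\co}(D_2)$, and the Jensen reduction at fixed $\rho$ becomes incomplete: one must argue directly with the envelope, which may destroy the strict inequality driving concentration (indeed, when $D_2$ is non-convex, spreading across several strata can be strictly cheaper, so the conjecture itself may require additional hypotheses on $\ell$). Third, a negligible amount of mass placed at an extreme index is essentially invisible to the cost and could persist in non-unique optima; eliminating such satellites asymptotically requires a fine perturbation argument that crucially exploits the tightest-gap selection built into the definition of $\underline{K}$ and $\overline{K}$.
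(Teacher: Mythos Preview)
The statement you address is presented in the paper as a \emph{Conjecture} and is not proved there; the authors leave \eqref{gapvanish} open, remarking only that the stronger bound $\overline{K}-\underline{K}\le 1$ holds for two-point measures (via \autoref{two-dirac} and \autoref{flat}) and pointing to \cite{dimarino2022grandcanonical} for related estimates. There is therefore no paper proof to compare your proposal against.

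On the substance of your sketch: the three obstacles you list at the end are not residual technicalities but genuine gaps, and the third one in particular blocks the argument as written. Even granting the continuum reduction and the identification $\mu_\infty=\delta_\theta$, weak* convergence $\mu_N\wconv\delta_\theta$ gives no control on $\min\supp\mu_N$ or $\max\supp\mu_N$: the sequence $(1-\tfrac1N)\delta_\theta+\tfrac1N\delta_1$ converges weakly to $\delta_\theta$ while its upper support endpoint stays at $1$. Since $\underline{K}/N$ and $\overline{K}/N$ are precisely these endpoints, the $\Gamma$-liminf step yields nothing about them. Invoking the ``tightest-gap selection'' does not help unless you can first exhibit, for each large $N$, an \emph{exact} optimizer of \eqref{relaxed-CN} whose support lies in $[\theta-o(1),\theta+o(1)]$; your compactness argument only produces approximate optimizers in that sense. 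The first obstacle is equally serious: replacing $C_i(\rho_i)$ by $C_\infty(\rho_i)$ via $\Gamma$-convergence presupposes that the active indices $i$ tend to infinity, which is exactly part of what you are trying to prove. Absent an a priori lower bound on $\underline{K}(N,\theta\rho)$, low strata with $i$ bounded could carry a fixed fraction of the mass and the asymptotic substitution is unjustified.
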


This is a weak version of the statement
\[ \underline{K}(N, \theta\rho) = \floor{\theta N}, \quad \overline{K}(N, \theta\rho) = \ceil{\theta N}, \]
where $\floor{x}$ (resp. $\ceil{x}$) denotes the largest integer smaller (resp. the smallest integer greater) than $x$. 
Having a gap bounded not larger than $1$ is true for a two points supported $\rho$ according to Subsection \ref{Dirac}, but 
possibly not true in general. Estimates on this gap are available in the recent work  by S. Di Marino, M. Lewin and L. Nenna
\cite{dimarino2022grandcanonical}. An immediate consequence of the validity of the asymptotic statement \eqref{gapvanish} will appear in the forthcoming \autoref{homogeneity-thm}.

\subsection{A recipe for computing \texorpdfstring{$\overline{C_N}$}{CN}}\label{Dirac}

An explicit computation of the relaxed transport cost $\overline{C_N}(\rho)$ for a general sub-probability $\rho \in \Prob_-(\R^d)$ is often much involved, and in most cases impossible to carry out. In this subsection we propose a recipe for deriving $\overline{C_N}(\rho)$ when $\rho$ is a combination of Dirac masses assuming that $\ell(0) < +\infty$.

\medskip
Let $\Sigma \eqdef \gra{x_1, x_2, \dotsc, x_m} \subset (\R^d)^m$ be a finite set supporting the atomic measure $\rho$. We associate with such $\Sigma$ the quadratic form defined by
$$ q_\Sigma(t) \eqdef \sum_{i, j=1}^m  \ell(|x_i-x_j|) t_i t_j, \quad t = (t_1, \dotsc, t_m) \in \R^m. $$
Then we define the convex function
\begin{equation}\label{fSiN}
f_\Sigma^{(N)} (t) \eqdef \inf_{\gamma \in \Prob(I_m^{(N)})} \gra{\sum_{k \in I_m^{(N)}} \gamma(k) q_\Sigma(k) \st \sum_{k \in I_m^{(N)}} \gamma(k) k = t}, 
\end{equation}
where $I_m^{(N)} \eqdef \gra{k \in \N^m: |k| \eqdef \sum_{i=1}^m k_i \le N}$ and we implicitly assume that $f_\Sigma^{(N)} (t) = +\infty$ if $t \notin \Delta_m^{(N)}$, being $\Delta_m^{(N)} \eqdef \R_+^d \cap \{|t|\le N\}$.

It is easy to check that $f_\Sigma^{(N)}$ coincides with the largest convex l.s.c. function $g \colon \R^d \to [0,+\infty]$ such that $g = q_{\Sigma}$ on $I_m^{(N)}$. Moreover, $f_\Sigma^{(N)}(t)$ is non-increasing with respect to $N$ with a limit as $N\to\infty$  given by 
\begin{equation} \label{fSinfty}
f_\Sigma(t) = \inf_{\gamma \in \Prob(\N^m)} \gra{ \sum_{k\in \N^m} \gamma(k) q_\Sigma(k) \st \sum_{k\in \N^m} \gamma(k) k = t}.
\end{equation}

On the other hand, due to the finiteness of the set $I_m^{(N)}$, the infimum in the linear programming  problem \eqref{fSiN} is actually a minimum and the convex funcion $f_\Sigma^{(N)} (t)$ is expected to be piecewise affine in $\Delta_N$. 

\begin{prop} \label{explicit}
	Let $\ell$ satisfy the standing assumptions and $\ell(0) < +\infty$. Let $\Sigma \eqdef \gra{x_1, x_2,\dots, x_m}$ and $s \in [0,1]^m$ such that $|s|:= \sum_i s_i \le 1$. Then, for every $N \geq 2$ :
	\begin{equation} \label{explidis}
	\overline{C_N} \pa{\sum_{i=1}^m s_i \delta_{x_i}} = \frac{f_\Sigma^{(N)} (Ns)}{N(N-1)} - \frac{\ell(0)}{N-1} \sum s_i. 
	\end{equation}
	In particular, if $q_\Sigma$ is non negative, then for every $k \in I_m^{(N)}$: 
	\begin{equation}\label{formula-N}
	\overline{C_N} \pa{\sum_{i=1}^m \frac{k_i}{N} \delta_{x_i}} = \frac{\sum_{i=1}^m k_i(k_i-1)}{N(N-1)} \ell(0) + \frac{2}{N(N-1)} \sum_{i \neq j} k_i k_j \ell(|x_i-x_j|).
	\end{equation}
\end{prop}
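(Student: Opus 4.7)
The plan is to combine the stratification formula \eqref{relaxed-CN} with an explicit computation of $C_i(\rho_i)$ when $\rho_i$ is atomic on $\Sigma$. First I would observe that, for $\rho=\sum_j s_j\delta_{x_j}$, the decomposition $\sum_i \tfrac{i}{N} a_i \rho_i = \rho$ as a sum of positive measures forces $\supp \rho_i\subset \Sigma$ whenever $a_i>0$. Hence we may write $\rho_i=\sum_j t_j^{(i)}\delta_{x_j}$ with $t^{(i)}$ a probability vector on $\{1,\dots,m\}$ and restrict the infimum in \eqref{relaxed-CN} to this class.

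Next I would compute $C_i(\rho_i)$ for such atomic $\rho_i$ via symmetric plans. Any symmetric $P\in\Pi_i(\rho_i)$ on $\Sigma^i$ is a convex combination of symmetrizations of tuples; grouping tuples by their multiplicity vector $k$, $P$ is parametrized by a probability $\gamma^{(i)}$ on $\{k\in\N^m:|k|=i\}$. A direct count gives $\sum_{a<b}\ell(|x_{j_a}-x_{j_b}|) = \tfrac{1}{2}(q_\Sigma(k)-i\ell(0))$ on any tuple of multiplicity $k$, so the value of $c_i$ on that tuple is $(q_\Sigma(k)-i\ell(0))/(i(i-1))$; the marginal condition becomes $\sum_k \gamma^{(i)}_k\, k = i\, t^{(i)}$. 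Consequently $i(i-1)C_i(\rho_i)=\min_{\gamma^{(i)}}\sum_k\gamma^{(i)}_k q_\Sigma(k) - i\ell(0)$.

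Substituting into the stratification, I would introduce the aggregated non-negative measure $\beta=\sum_i a_i\gamma^{(i)}$ on $I_m^{(N)}$. The total-mass constraint $\sum_i a_i\le 1$ becomes $|\beta|\le 1$, and the marginal constraint $\sum_i\tfrac{i}{N}a_i\rho_i=\rho$ becomes $\sum_k\beta_k\, k=Ns$. Summing coordinates in this last identity yields $\sum_k\beta_k |k|=N|s|$, so the stratified cost collapses to
\[
\frac{1}{N(N-1)}\sum_k\beta_k q_\Sigma(k)\;-\;\frac{|s|\,\ell(0)}{N-1}.
\]
The slack $1-|\beta|$ can be harmlessly placed at the vacuum vertex $k=0$, where both $q_\Sigma(0)=0$ and $|0|=0$, so one may assume $\beta\in\Prob(I_m^{(N)})$. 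The remaining minimization is precisely the linear program defining $f_\Sigma^{(N)}(Ns)$, which proves \eqref{explidis}.

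For \eqref{formula-N}, I would argue that when $q_\Sigma\ge 0$ on $\R^m$, $q_\Sigma$ is convex, so feasibility of $\gamma=\delta_{k}$ together with Jensen's inequality gives $f_\Sigma^{(N)}(k)=q_\Sigma(k)$ at integer vertices. Plugging this into \eqref{explidis} with $s=k/N$ and expanding $q_\Sigma(k)=\sum_i k_i^2\ell(0)+\sum_{i\ne j}k_ik_j\ell(|x_i-x_j|)$, the contribution $\sum_i k_i^2\ell(0)-|k|\ell(0)$ collapses to $\sum_i k_i(k_i-1)\ell(0)$, yielding the stated formula. I expect the main obstacle to lie in the third step: organizing the bookkeeping so that the aggregated measure $\beta$ captures all of the constraints faithfully, and checking that the vacuum vertex $k=0$ truly absorbs the mass slack without altering either cost or marginals.
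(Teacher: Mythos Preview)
Your argument is correct. The route you take, however, differs from the paper's. The paper works directly with the compactified characterization \eqref{defCtild}: any symmetric $N$-plan on $X^N$ with marginal $\tilde\rho=\sum_i s_i\delta_{x_i}+(1-|s|)\delta_\omega$ is already a convex combination of the symmetrized atoms $p_\Sigma(k)=\Sym(\delta_{x_1}^{\otimes k_1}\otimes\cdots\otimes\delta_{x_m}^{\otimes k_m}\otimes\delta_\omega^{\otimes(N-|k|)})$ for $k\in I_m^{(N)}$, so the probability $\gamma$ on $I_m^{(N)}$ appears in one stroke, the marginal condition reads $\sum_k\gamma(k)k=Ns$, and the cost computation $c_N(p_\Sigma(k))=(q_\Sigma(k)-|k|\ell(0))/(N(N-1))$ gives \eqref{explidis} immediately. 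You instead start from the stratification formula \eqref{relaxed-CN}, compute $C_i(\rho_i)$ level by level via probabilities $\gamma^{(i)}$ on $\{|k|=i\}$, and then re-aggregate into $\beta=\sum_i a_i\gamma^{(i)}$ to recover the same linear program. This is a perfectly valid detour---since \eqref{relaxed-CN} is itself obtained from \eqref{defCtild} by slicing over $\{|k|=K\}$, you are effectively undoing that slicing. The paper's path is shorter; yours makes the link between the stratification and the $\gamma$-parametrization explicit, which is informative in its own right (and indeed is the content of the paper's subsequent Corollary~\ref{|k|=K}). For \eqref{formula-N} both arguments coincide: convexity of $q_\Sigma$ plus feasibility of $\delta_k$ forces $f_\Sigma^{(N)}(k)=q_\Sigma(k)$.
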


\begin{proof}
By the characterization in \eqref{defCtild}, $\overline{C_N}(\rho)$ is the minimal cost of symmetric $N$-transport plan $P_N$ on the compactified space $ X = \R^d \cup \gra{\omega}$ with marginals $\tilde{\rho}=\sum_{i=1}^m s_i \delta_{x_i} + (1-|s|) \delta_\omega$. 
The generic form of such a $N$-transport plan $P$ is given by
\[ P = \sum_{k \in I_m^{(N)}} \gamma(k) p_\Sigma(k) \quad \text{being} \quad p_\Sigma(k) \eqdef \Sym \left( \delta_{x_1}^{\otimes k_1} \otimes \dotsb\otimes \delta_{x_m}^{\otimes k_m} \otimes \delta_\omega^{\otimes (N-|k|)}\right)
\]
and  where $\gamma (k) \geq 0$ satisfies $\sum_{k \in I_m^{(N)}} \gamma(k) = 1$ (\ie, $\gamma \in \Prob(I_m^{(N)})$) and the marginal condition
\begin{equation}\label{margi-k}
	\sum_{k \in I_m^{(N)}}  \gamma(k) k = Ns. 
\end{equation}

The  cost of such an admissible  $P$ reads $ \int_{X^d} c_N dP = \sum_{k\in I_m^{(N)}} \gamma(k) c_N(p_\Sigma(k)).$ After a careful computation one checks that the cost $c_N(p_\Sigma(k))$ is given up to the multiplicative factor $\frac{1}{N(N-1)}$ by
\[ \sum_{k \in I_m^{(N)}} \pa{\sum_{i=1}^m k_i(k_i - 1) \ell(0) + \sum_{i \neq j} k_ik_j\ell(|x_i-x_j|)} = q_\Sigma(k) - |k| \ell(0). \]

Taking into account \eqref{margi-k}, the total cost reduces to:
\[ \int_{X^d} c_N dP = \sum_{k\in I_m^{(N)}} \gamma(k) \frac{q_\Sigma(k)}{N(N-1)} - \frac{s}{N-1} \ell(0). \]

Searching for the minimal cost brings us to solving \eqref{fSiN} and to the expression \eqref{explidis} for $\overline{C_N} \pa{\sum_{i=1}^m s_i \delta_{x_i}}$. Now if $k$ denotes a particular element of $I_m^{(N)}$, by plugging the value $s = \frac{k}{N}$, we obtain
\[ \overline{C_N} \pa{\sum_{i=1}^m \frac{k_i}{N} \delta_{x_i}} = \frac{f_\Sigma^{(N)} (k) - \ell(0) |k|}{N(N-1)}. \]

In view of the definition of $f_\Sigma^{(N)}$ in \eqref{fSiN}, if we assume that $q_\Sigma\ge 0$, then by convexity it holds  $f_\Sigma^{(N)}(k) \ge q_\Sigma(k)$ while the opposite inequality is obvious since $k\in I_m^{N}$. The equality \eqref{formula-N} follows.
\end{proof} 

In view of \eqref{explidis}, it is possible to recover an optimal decomposition in the stratification formula \eqref{relaxed-CN} 
from an optimal $\gamma$ in \eqref{fSiN}, as in the following.
 
\begin{corollary} \label{|k|=K} 
	Let $\rho= \sum_{i=1}^m s_i\, \delta_{x_i}$ and let  $\gamma\in \Prob(I_m^{(N)})$ be optimal \eqref{fSiN}.
	Then, setting for every $K\in \{0,1,\dots ,N\}$:
	\begin{equation}\label{gammaK}
	 a_K \eqdef \sum_{\abs{k}=K} \gamma(k), \quad \rho_K \eqdef \sum_{\abs{k}=K} \frac{\gamma(k)}{a_K}\pa{\sum_{i=1}^m \frac{k_i}{K} \delta_{x_i}},
	\end{equation}
	we obtain an optimal decomposition $\rho = \sum_{K = 1}^N \frac{K}{N} a_K \rho_K$ in \eqref{explidis}. 
	
	Accordingly, with the notation of Subsection \ref{conj}), we have:
	\begin{equation} \label{Kpm}
	\underline{K}(\rho) = \min \{ \abs{k} \st k\in \supp(\gamma) \}, \quad \ov{K}(\rho) = \max \{ \abs{k} \st k\in \supp(\gamma)\}.
	\end{equation}
\end{corollary}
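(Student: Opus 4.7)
The plan is to split the statement into two parts. For the first, I verify that $(a_K, \rho_K)$ as defined by \eqref{gammaK} is an admissible competitor in the stratification formula \eqref{relaxed-CN} whose value equals $\overline{C_N}(\rho)$. For admissibility, $a_K \ge 0$ and $\sum_{K=0}^N a_K = \sum_k \gamma(k) = 1$ are immediate, and the marginal identity follows by swapping summations and using the constraint $\sum_k \gamma(k) k = Ns$ built into \eqref{fSiN}:
\[ \sum_{K=1}^N \frac{K}{N} a_K \rho_K = \frac{1}{N} \sum_{k \in I_m^{(N)}} \gamma(k) \sum_{i=1}^m k_i \delta_{x_i} = \sum_{i=1}^m s_i \delta_{x_i} = \rho. \]

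To estimate the cost, I would use convexity of $C_K$ to dominate $C_K(\rho_K) \leq \sum_{|k|=K} \frac{\gamma(k)}{a_K} C_K(\nu_k)$ with $\nu_k = \sum_i \frac{k_i}{K}\delta_{x_i}$. For each atomic probability $\nu_k$, the transport plan that concentrates $k_i$ particles at $x_i$ and then symmetrizes gives $C_K(\nu_k) \le \frac{q_\Sigma(k) - K\ell(0)}{K(K-1)}$, by essentially the same computation already appearing in the proof of \autoref{explicit}. Substituting, using $\sum_k \gamma(k)|k| = N|s|$, and invoking optimality of $\gamma$ in \eqref{fSiN}, one gets
\[ \sum_{K=2}^N \frac{K(K-1)}{N(N-1)} a_K C_K(\rho_K) \leq \frac{f_\Sigma^{(N)}(Ns)}{N(N-1)} - \frac{\ell(0)|s|}{N-1} = \overline{C_N}(\rho), \]
where the last equality is \eqref{explidis}. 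Since $\overline{C_N}(\rho)$ is the infimum in \eqref{relaxed-CN}, the reverse inequality is automatic, so the decomposition is optimal.

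For the identities \eqref{Kpm}, note that by construction $a_K > 0$ if and only if some $k \in \supp(\gamma)$ has $|k| = K$, hence $\{K : a_K > 0\} = \{|k| : k \in \supp(\gamma)\}$ and the formulas hold for this specific decomposition. To match the convention of Subsection~\ref{conj}, which selects the optimal decomposition with the \emph{smallest} extreme-index gap, I would prove the converse lifting: any optimal stratification $(a_K', \rho_K')$ can be lifted to an optimal $\gamma'$ in \eqref{fSiN}. For each $K$ with $a_K' > 0$ I pick a symmetric optimal $K$-marginal plan for $\rho_K'$; since $\supp \rho_K' \subseteq \Sigma$, such a plan has the form $\sum_{|k|=K} \beta_K(k) p_\Sigma(k)$, and I set $\gamma'(k) = a_{|k|}' \beta_{|k|}(k)$ for $|k| \ge 1$, together with $\gamma'(0) = 1 - \sum_K a_K'$. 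A computation mirroring the one above yields $\sum_k \gamma'(k) q_\Sigma(k) = N(N-1)\overline{C_N}(\rho) + N|s|\ell(0) = f_\Sigma^{(N)}(Ns)$, so $\gamma'$ attains the minimum; this gives a bijection between $|k|$-support ranges over optimal $\gamma$'s and $K$-support ranges over optimal stratifications, proving \eqref{Kpm}.

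The main obstacle I expect is this converse passage: matching every constant (the $N|s|\ell(0)$ term, the $1/(N-1)$ prefactor) so that $\gamma'$ actually attains $f_\Sigma^{(N)}(Ns)$ requires careful bookkeeping based on \autoref{explicit} applied to each probability $\rho_K'$ (so that $\overline{C_K}(\rho_K') = C_K(\rho_K')$ and \eqref{explidis} is available at level $K$); the saturation of the convexity inequalities in the forward direction also has to be checked to guarantee that no mass is created or lost when disaggregating. Once these steps are in place, the rest is essentially linear.
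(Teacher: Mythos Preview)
Your argument for the optimality of the stratification $(a_K,\rho_K)$ is correct and essentially the same as the paper's: both establish the chain
\[
\overline{C_N}(\rho)\ \le\ \sum_{K\ge2}\frac{K(K-1)}{N(N-1)}\,a_K\,C_K(\rho_K)\ \le\ \frac{f_\Sigma^{(N)}(Ns)}{N(N-1)}-\frac{\ell(0)|s|}{N-1}\ =\ \overline{C_N}(\rho),
\]
the paper by invoking \eqref{explidis} at level $K$ with $\gamma_K$ as a competitor, you by convexity of $C_K$ together with the explicit bound $C_K(\nu_k)\le \frac{q_\Sigma(k)-K\ell(0)}{K(K-1)}$. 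These are two phrasings of the same computation.

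For \eqref{Kpm} you actually go further than the paper. The paper only records that, for the particular decomposition built from $\gamma$, the active index set is $\{|k|:k\in\supp(\gamma)\}$, and then declares \eqref{Kpm}. As you noticed, the quantities $\underline K,\overline K$ in Subsection~\ref{conj} are defined via the optimal stratification of \emph{smallest} gap, so an extra step is needed. Your lifting argument is the right one and it works: any optimal stratification $(a_K',\rho_K')$ has each $\rho_K'$ supported on $\Sigma$ (positivity in $\sum_K\frac{K}{N}a_K'\rho_K'=\rho$), an optimal symmetric $P_K'\in\Pi_K(\rho_K')$ is a convex combination of the $p_\Sigma(k)$ with $|k|=K$, and setting $\gamma'(k)=a_{|k|}'\beta_{|k|}(k)$ (with $\gamma'(0)=1-\sum_Ka_K'$ and $\beta_1(e_i)=\rho_1'(\{x_i\})$) produces an admissible $\gamma'$ whose cost equals $N(N-1)\overline{C_N}(\rho)+N|s|\ell(0)=f_\Sigma^{(N)}(Ns)$ by \eqref{explidis}. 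This yields the correspondence between index sets of optimal stratifications and sets $\{|k|\ge1:k\in\supp(\gamma')\}$ for optimal $\gamma'$, which is exactly what is needed to match the minimal-gap convention. The bookkeeping you anticipated as an obstacle goes through cleanly; no saturation argument is required in the forward direction since the two endpoints of the chain already coincide.
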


\begin{proof} The admissibility of $\gamma$ in \eqref{fSiN} implies that  $N s_i = \sum_{k\in I_m^{(N)}} k_i \gamma(k)$ for every $1 \le i\le m$. By splitting the sum over the subsets $\{k\in I_m^{(N)} \st \abs{k}=K\}$ for $0\le K\le N$, we derive that
 $\rho = \sum_{K = 1}^N \frac{K}{N} a_K \rho_K$ while $\sum_{K=0}^N a_K = 1$, obtaining an admissible decomposition for
 \eqref{explidis}. It follows that
 \[ \overline{C_N}(\rho) \le \sum_{K = 2}^N a_K \frac{K(K-1)}{N(N-1)} C_K(\rho_K). \]
 
The converse inequality is a consequence of the optimality of $\gamma$ in \eqref{fSiN}. We have:
\begin{align*}
\overline{C_N}(\rho) &= \sum_{K=1}^N  \pa{ \sum_{|k|=K} \frac{ q_\Sigma(k) -\ell(0) |k|}{N(N-1)} \gamma(k)} \\
&\ge  \sum_{K=2}^N  a_K \frac{K(K-1)}{N(N-1)}   \pa{\sum_{|k|=K} \frac{ q_\Sigma(k) -\ell(0) K}{K(K-1)} \gamma_K(k)} \\
&\ge \sum_{K=2}^N  a_K \frac{K(K-1)}{N(N-1)} C_K(\rho_K),
\end{align*} 
where in the second inequality  $\gamma_K$ denotes the probability on $\{|k|=K\}$  such that $\gamma_K(k)= \frac{\gamma(k)}{a_K}$. In the second inequality, we applied \eqref{explidis} with $N=K$ noticing that $\gamma_K$ is an admissible competitor in 
$\Prob(I_m^{(K)})$ for $f_{\Sigma}^{(K)}(t^{(K)})$ being $t^{(K)} = \sum_{|k|=K} \gamma_K(k) k$.
In turn, this implies that
\[ C_K(\rho_K) = C_K \left(\sum_{i=1}^m  \frac{t^{(K)}_i}{K} \delta_{x_i}\right) \leq \sum_{|k|=K} \frac{ q_\Sigma(k) -\ell(0) K}{K(K-1)} \gamma_K(k). \]
Summarizing, we proved that the decomposition of $\rho$ given by $a_K, \rho_K$ (as defined in \eqref{gammaK}) is optimal in the stratification formula \eqref{relaxed-CN}. The relations \eqref{Kpm} follows directly  from the expression of $a_K$.
\end{proof} 

\begin{remark}\label{N-quantized} 
 If $ q_\Sigma\ge 0$, the explicit formula \eqref{formula-N} for $\overline{C_N}(\rho)$ allows us to see  that the equality holds in \eqref{trueineq} 
namely $$\overline{C_N}(\rho)=\frac{K(K-1)}{N(N-1)}\, C_K \left(\frac{N}{K} \rho\right)$$ whenever
$\rho$ is a $N$-quantized sub-probability of total fractional mass $\frac{K}{N}$ , i.e. of the form $\sum_{i=1}^m \frac{k_i}{N}\, \delta_{x_i}$ for suitable $k_i$ such that $K=\sum k_i<N$.
The extension of previous equality to general $\rho$ such that $\|\rho\|= K/N$ is straightforward if the convex function $f_\Sigma^{(N)}$ agrees 
on $I_m^{(N)}$ with a suitable function on $\R_+^m$ not depending on $N$. Equivalently we need that $f_\Sigma^{(N)}= f_\Sigma$ on $I_m^{(N)}$
for every $N\ge 2$ with $f_\Sigma$ given by \eqref{fSinfty}.
 We expect this equality  to be true for any cost $\ell$ such that $q_\Sigma\ge 0$ although we could prove it only in the case $m=2$ by means of  a characterization of $f_\Sigma^{(N)}$ given below (see \autoref{m=2}) .
 Note that the sign condition on the quadratic form $q_\Sigma$ is satisfied for every finite set $\Sigma$ if and only if $\ell$ is a positive semi-definite function in the sense of (H5).
   \end{remark}
   
 \subsection{Identification of \texorpdfstring{ $f_\Sigma^{(N)}$}{SigmaN} (resp. \texorpdfstring{$ f_{\Sigma}$}{fSigma}) through simplicial partitions}\ 
 
In the spirit of the finite element method, we consider  partitions of  $\Ob=\Delta_m^{(N)}$ (resp. of $\Ob=\R_+^m$)
which consist of a family $\mathcal{F}$ of simplices $S$ in $\R^m$ such that:
\begin{itemize}
\item [i)]\ $\cup\{ S  : S\in \mathcal{F}\} = \Ob$; 
\item[ii)]\  the elements of $\mathcal{F}$ have  mutually disjoints interiors;
\item[iii)]\ any face of a simplex $S\in \mathcal{F}$ is either a face of another $S'\in \mathcal{F}$,  or a subset of $\partial\Ob$.
\end{itemize}
For the existence  of such partitions in any dimension $m$, we refer to \cite{Brandts2020} (see references therein).
In our case we will restrict ourselves to partitions $\mathcal{F}$,  that we call {\em admissible simplicial partitions}, consisting of simplices whose vertices are in the integer lattice $\N^m$.
 A natural guess supported by forthcoming \autoref{partition}
is that $f_\Sigma^{(N)}$ and $f_\Sigma$ should be piecewise affine along such a partition.

\begin{lemma} \label{partition}
	Assume that $q_\Sigma\ge 0$. Let be given an admissible simplicial partition of $\Delta_m^{(N)}$ (resp $\R_+^m$) and denote by $g$  the unique continuous function $g$ which is is affine on each simplice and satisfies  $g= q_\Sigma $ on $I_m^{(N)}$ (resp. on $\N^m$).
	
	Then $f_\Sigma^{(N)} = g$ on $\Delta_m^{(N)}$ (resp $f_\Sigma = g$ on $\R_+^m$) if and only if $g$ is convex on $\Delta_m^{(N)}$ (resp $\R_+^m$). If it is the case, then the interior of each simplex $S$ does not meet $\N^m$.
\end{lemma}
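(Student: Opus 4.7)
The plan is to exploit the characterization of $f_\Sigma^{(N)}$ (resp.\ $f_\Sigma$) as the largest convex lower semicontinuous function on $\R^m$ that coincides with $q_\Sigma$ on $I_m^{(N)}$ (resp.\ on $\N^m$); this characterization is legitimate under the hypothesis $q_\Sigma\ge 0$, which guarantees that $q_\Sigma$ itself is convex on $\R^m$ (so Jensen's inequality forces the infimum in \eqref{fSiN} to be attained at $\gamma=\delta_k$ for $k\in I_m^{(N)}$).

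The implication ``$f_\Sigma^{(N)}=g\Rightarrow g$ convex'' is immediate, since $f_\Sigma^{(N)}$ is convex by construction. For the converse, assuming $g$ convex, I would prove the two inequalities separately. For $f_\Sigma^{(N)}\le g$ pointwise: every $t\in\Delta_m^{(N)}$ lies in some simplex $S$ of the partition and admits a barycentric decomposition $t=\sum_i\lambda_iv_i$ over its vertices $v_i\in I_m^{(N)}$; the probability $\gamma=\sum_i\lambda_i\delta_{v_i}$ is admissible in \eqref{fSiN}, so $f_\Sigma^{(N)}(t)\le\sum_i\lambda_iq_\Sigma(v_i)=g(t)$. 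For $g\le f_\Sigma^{(N)}$: $g$ is convex, continuous, and agrees with $q_\Sigma$ on $I_m^{(N)}$, so the maximality characterization above forces $g\le f_\Sigma^{(N)}$. The argument for $\R_+^m$ is identical, with $I_m^{(N)}$, $\Delta_m^{(N)}$ and $f_\Sigma^{(N)}$ replaced by $\N^m$, $\R_+^m$ and $f_\Sigma$ (noting that an arbitrary $t\in\R_+^m$ still belongs to a simplex of the partition with vertices in $\N^m$).

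For the final assertion, argue by contradiction: assume $f_\Sigma^{(N)}=g$ and that some simplex $S$ of the partition contains a lattice point $k\in\N^m$ in its (relative) interior. Since $k\in S\subset\Delta_m^{(N)}$ one has $|k|\le N$, so $k\in I_m^{(N)}$ and thus $g(k)=q_\Sigma(k)$. On the other hand, $k$ admits a unique barycentric representation $k=\sum_i\lambda_iv_i$ over the vertices of $S$ with all $\lambda_i>0$, and affinity of $g$ on $S$ yields $g(k)=\sum_i\lambda_iq_\Sigma(v_i)$. Equating the two gives equality in Jensen's inequality $q_\Sigma(\sum_i\lambda_iv_i)\le\sum_i\lambda_iq_\Sigma(v_i)$ for the convex function $q_\Sigma$ along a convex combination with strictly positive weights. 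The plan is to deduce from this that $q_\Sigma$ is in fact affine on the whole simplex $S$; since $S$ is full-dimensional in $\R^m$, $q_\Sigma$ would then be affine on the entire affine hull $\R^m$, which is incompatible with its being a pure quadratic form satisfying $q_\Sigma(e_i)=\ell(0)>0$ (by (H1)).

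The most delicate point is the equality case of Jensen for a merely convex, not necessarily strictly convex, function: a standard iterated midpoint argument propagates equality from the single barycentric identity $k=\sum_i\lambda_iv_i$ to affinity of $q_\Sigma$ on the whole convex hull of $\{v_0,\dots,v_m\}$, after which full-dimensionality of $S$ closes the argument. Everything else is routine bookkeeping.
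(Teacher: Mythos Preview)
Your proof is correct and follows essentially the same route as the paper's. The only stylistic difference is in the last assertion: the paper argues directly that the convex quadratic $q_\Sigma$ agreeing with the affine function $g|_S$ at the $m+1$ vertices \emph{and} at an interior lattice point forces $q_\Sigma$ to be affine on $S$ (hence, by full-dimensionality and homogeneity, $q_\Sigma\equiv 0$, contradicting $\ell(0)>0$), whereas you phrase this same step as an equality case of Jensen's inequality followed by a midpoint propagation. Both arguments are equivalent and rest on the convexity of $q_\Sigma$; your version is slightly more elaborate than necessary but entirely valid.
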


\begin{proof} By restricting the infimum in \eqref{fSiN} to elements $\gamma\in \Prob(I_m^{(N)})$ supported on the vertices of a simplex, we see that the inequality $f_\Sigma^{(N)} \le g$ is always true. Since  $g = q_\Sigma = f_\Sigma^{(N)} $ on $I_m^{(N)}$, the opposite inequality $g \geq f_\Sigma^{(N)}$ holding on $\Delta_m^{(N)}$ is equivalent to the convexity of $g$. The characterization of $f_\Sigma$ is obtained in the same way.

Assume now that the interior of a simplex $S$ of the partition contains an element of $\N^m$. Then the convex quadratic function $q_\Sigma$ would agree with the affine function $g$ not only on the vertices of $S$ but also at that  interior point forcing $q_\Sigma$ to be affine over the whole simplex. This is impossible unless $q_\Sigma$ vanishes identically. 
\end{proof}

\begin{remark} \label{flat}
	A consequence of \autoref{partition} is that, given an  admissible simplicial partition $\mathcal{F}$ of $\R_+^m$ such that the affine interpolant of $q_\Sigma$ is convex, then the equality $f_\Sigma^{(N)} = f_{\Sigma}$ is true provided that any simplex $S\in \mathcal{F}$ satisfies the flatness criterium  $\abs{|s|-|t|} \le 1$ for all $s,t \in S$. Indeed, under the this condition, any  such a simplex $S$ touching the interior of  $\Delta_m^{(N)}$ satisfies $S \subset \Delta_m^{(N)}$. In the same way, if $ \mathcal{F}$ is an admissible simplicial partition of $\Delta_m^{(N)}$ providing  a convex interpolant of $q_\Sigma$, then in view of \eqref{Kpm} we have the bound:
	\[ \overline{K}(N, \rho) - \underline{K}(N, \rho) \le \sup_{S \in \mathcal{F}} \max \gra{|s|-|t|: s,t \in S}, \]
	holding for every  $\rho\in \Prob_-(\R^d)$  supported on $\Sigma$.
\end{remark}

It turns out that, in the case of two Dirac masses (\ie, $m=2$ and $\Sigma=\gra{x,y}$), the equality $f_\Sigma^{(N)} = f_\Sigma$ holds in $I_2^{(N)}$ and we can identify the optimal triangle partition of $\R_+^2$ associated with $f_\Sigma$ . More precisely, let us denote, for every $k,l \in \N$, the square $ Y_{kl}:=[k, k+1]\times [l,l+1]$ that we split into the two triangles $ T_{kl}^- = \{(u,v)\in Y_{kl}: u + v \le k+l+1\}$ and $ T_{kl}^+ = \{(u,v)\in Y_{kl}: u+v \ge k+l+1\}.$

Accordingly we obtain  an admissible simplicial partition of $\R_+^2$ satisfying the flatness criterium of \autoref{flat}.
Then  let $g: \R_+\times\R_+\to \R_+$  be the unique continuous function which is affine on each $T_{kl}^\pm$ and such that  
\[ g(k,l) = \ell(0) (k^2 + l^2) + 2 \min \gra{\ell(0), \ell(|x-y|)} k l, \quad \forall (k,l) \in \N^2. \]
Note that the positivity of $q_\Sigma$ here means that $\ell(0) \ge \ell(|x-y|)$.

\begin{lemma} \label{m=2}
	The piecewise affine function $g$ defined above is convex.
\end{lemma}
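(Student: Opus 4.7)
Set $a := \ell(0)$ and $b := \min\{\ell(0), \ell(|x-y|)\}$, so that $0 \le b \le a$ and $g(k,l) = a(k^2+l^2) + 2b\,kl$ on $\N^2$. The plan is to apply the standard criterion that a continuous piecewise affine function on a $2$D simplicial mesh is convex on the whole domain if and only if, across every interior edge, the jump of the gradient in the direction of the unit normal (oriented from one triangle into the adjacent one) is non-negative. This reduces the question to a finite, symmetric check on the three combinatorial types of interior edge of the triangulation $\{T_{kl}^{\pm}\}_{k,l\in\N}$.

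First I would compute the (constant) gradient of $g$ on each triangle by straightforward linear algebra from the three vertex values, yielding
\[
\nabla g|_{T_{kl}^-} = \bigl(a(2k+1)+2bl,\ a(2l+1)+2bk\bigr),\qquad
\nabla g|_{T_{kl}^+} = \bigl(a(2k+1)+2b(l+1),\ a(2l+1)+2b(k+1)\bigr).
\]

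Next I would identify the three families of interior edge and compute the normal jump on each: the anti-diagonal of $Y_{kl}$ shared by $T_{kl}^-$ and $T_{kl}^+$ (with normal $(1,1)/\sqrt{2}$), the horizontal edge $[k,k+1]\times\{l\}$ shared by $T_{k,l-1}^+$ and $T_{kl}^-$ (normal $(0,1)$), and the vertical edge $\{k\}\times[l,l+1]$ shared by $T_{k-1,l}^+$ and $T_{kl}^-$ (normal $(1,0)$). A direct subtraction shows that the gradient jump across the anti-diagonal equals $(2b,2b)$, whose normal component is $2\sqrt{2}\,b \ge 0$, while across each axis-parallel edge the jump is $(0,2(a-b))$ or $(2(a-b),0)$, whose normal component equals $2(a-b) \ge 0$ since $b\le a$. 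The symmetry $(u,v)\leftrightarrow(v,u)$ of the whole construction reduces the vertical case to the horizontal one.

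The only possible stumbling point is the combinatorial bookkeeping of which triangles share which edges, but this is transparent from the explicit $T_{kl}^\pm$ splitting of the unit squares $Y_{kl}$; every interior edge falls into exactly one of the three listed families. The boundary edges on $\{u=0\}$ and $\{v=0\}$ require no check, as they lie on the boundary of the convex domain $\R_+^2$. I do not anticipate a substantive obstacle: the argument reduces to the two inequalities $b \ge 0$ and $a \ge b$, both of which are built into the very definition of $g$ through the use of the minimum in the off-diagonal coefficient.
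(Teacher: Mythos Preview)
Your argument is correct and is essentially identical to the paper's own proof: both compute the constant gradients on $T_{kl}^\pm$, identify the three families of interior edges, and verify that the normal jumps equal $2\sqrt{2}\,b$ across the anti-diagonals and $2(a-b)$ across the axis-parallel edges, which are non-negative precisely because $0\le b\le a$. Your gradient formulas are in fact the correct ones (the paper's display omits a factor $2$ in front of the $L_1$ terms, though its jump computation is consistent with yours).
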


\begin{proof} Setting $L_0 \eqdef \ell(0)$ and $L_1 \eqdef \min\gra{\ell(0), \ell(|x-y|)}$, the gradient of $g$ is piecewise constant and given on $Y_{kl}$ by:
\begin{equation} \label{jumps}
\nabla g = \begin{cases} 
L_0 (2k+1, 2l+1) + L_1(l,k) & \text{if $(k,l) \in T_{kl}^-$}\\
L_0 (2k+1, 2l+1) + L_1 (l+1,k+1) & \text{if $(k,l) \in T_{kl}^+$.}
\end{cases}
\end{equation}

The distributional Hessian of $g$ will be a rank tensor measure concentrated on the jump set of $\nabla g$. Checking the convexity of $g$ reduces then to check that $\nabla^2 g \ge 0$ which means that the normal jumps of $\nabla g$ are non-negative
along the sides of each triangle (or equivalently that the distributional Laplacian of $g$ is non-negative). 

In view of \eqref{jumps}, an easy computation shows that that the normal jumps along the horizontal or vertical sides are all equal $2 (L_0 - L_1)$, while the normal jumps along the oblique sides are equal to $2 \sqrt{2}\, L_1$. Accordingly, the distributional Laplacian  $\Delta g$ is a non-negative periodic measure.
\end{proof}

As a direct consequence of \autoref{explicit}, \autoref{partition} and \autoref{m=2}, we get the following.

%

\begin{corollary} \label{two-dirac} Let $\Sigma=\gra{x,y}$. Then for every $s,t \in [0,1]$ such that $s+t\le 1$, we have  
 \[ \ov{C_N}(s \delta_x + t \delta_y) = \frac{f_\Sigma(Ns, Nt)}{N(N-1)}. \]
 
 In particular, if $s + t = \frac{K}{N}$ with $K\in\N$, then 
 \[ \ov{C_N}(s \delta_x + t \delta_y) = C_K \pa{\frac{s}{s+t} \delta_x + \frac{t}{s+t}\delta_y}. \]
\end{corollary}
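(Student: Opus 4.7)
The plan is to specialize Proposition \ref{explicit} to $m=2$ and then identify the finite-level convex envelope $f_\Sigma^{(N)}$ with the global $f_\Sigma$ on $\Delta_2^{(N)}$. All the substantial work for this identification has already been done in Lemmas \ref{partition} and \ref{m=2}; the concrete vehicle that glues them together is the explicit triangulation $\mathcal{F} = \{T_{kl}^\pm : k,l\in\N\}$ introduced just before Lemma \ref{m=2}.

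First I check that $\mathcal{F}$ is an admissible simplicial partition of $\R_+^2$: its vertices lie in $\N^2$, the face-compatibility conditions are immediate, and, crucially, any two vertices of a common triangle have $|\cdot|$-values differing by at most $1$. Thus $\mathcal{F}$ satisfies the flatness criterion of Remark \ref{flat}, which forces every $T_{kl}^\pm$ meeting the interior of $\Delta_2^{(N)} = \R_+^2 \cap \{|s|\le N\}$ to lie inside $\Delta_2^{(N)}$; consequently the restricted family $\mathcal{F}_N$ is itself an admissible simplicial partition of $\Delta_2^{(N)}$, with all vertices in $I_2^{(N)}$.

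By Lemma \ref{m=2}, the piecewise-affine interpolant $g$ associated with $\mathcal{F}$ is convex on $\R_+^2$, and (under the positivity $q_\Sigma\ge 0$ implicit in Lemma \ref{partition}) it coincides with $q_\Sigma$ on $\N^2$. Two applications of Lemma \ref{partition} — one to $\mathcal{F}$ and one to $\mathcal{F}_N$ — then yield $f_\Sigma = g$ on $\R_+^2$ and $f_\Sigma^{(N)} = g$ on $\Delta_2^{(N)}$, and therefore $f_\Sigma^{(N)}(Ns, Nt) = f_\Sigma(Ns, Nt)$ for every $s,t\ge 0$ with $s+t\le 1$. Substituting this equality into the formula \eqref{explidis} of Proposition \ref{explicit} establishes the first identity of the corollary.

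For the second part, assume $s+t = K/N$ and set $\rho' = \frac{N}{K}(s\delta_x + t\delta_y)\in\Prob(\R^d)$, so that $C_K(\rho') = \overline{C_K}(\rho')$. Applying the first identity at level $N$ to $s\delta_x + t\delta_y$ and at level $K$ to $\rho'$ — in both cases the argument of $f_\Sigma$ is $(Ns, Nt)$ — and using $N(s+t) = K$ to match the additive $\ell(0)$-corrections coming from \eqref{explidis}, the stated relation drops out by elementary algebra. The only substantive technical obstacle in the whole argument is Lemma \ref{m=2}, which is already proved; beyond that the only subtle point here is the flatness check that allows restricting $\mathcal{F}$ to $\Delta_2^{(N)}$ without cutting any triangle.
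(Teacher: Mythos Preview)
Your approach is exactly the one the paper has in mind: the paper's entire proof is the sentence ``As a direct consequence of \autoref{explicit}, \autoref{partition} and \autoref{m=2}, we get the following,'' and your proposal is a faithful unpacking of that sentence, with the flatness observation of Remark~\ref{flat} supplying the bridge between $f_\Sigma$ and $f_\Sigma^{(N)}$ on $\Delta_2^{(N)}$. One minor point worth flagging: substituting $f_\Sigma^{(N)}=f_\Sigma$ into \eqref{explidis} literally yields $\ov{C_N}(s\delta_x+t\delta_y)=\dfrac{f_\Sigma(Ns,Nt)}{N(N-1)}-\dfrac{\ell(0)(s+t)}{N-1}$, so the first displayed identity in the corollary is missing that additive term (and the second is missing the factor $\tfrac{K(K-1)}{N(N-1)}$); your derivation of the second part, which tracks the $\ell(0)$-correction and cancels it against the same correction at level $K$, is the correct computation and confirms the intended equality $\ov{C_N}(\rho)=\tfrac{K(K-1)}{N(N-1)}\,C_K(\tfrac{N}{K}\rho)$ consistent with \eqref{trueineq} and Remark~\ref{N-quantized}.
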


\begin{remark} The case of a single Dirac mass $\Sigma=\{x\}$ is recovered by taking $t=0$. Then $h(s) \eqdef \ov{C_N}(s \delta_x)$ is the monotone continuous function on $[0,1]$ such that $h(\frac{k}{N}) = \ell(0)\, \frac{k(k-1)}{N(N-1}$ and $f$ is affine on each interval $[\frac{k}{N}, \frac{k+1}{N}].$

In the case of two Dirac masses at $x, y$ such that $\ell(|x-y|) < \ell(0)$, an easy computation gives $f_{\Sigma}(k,l) = h(\frac{k+l}{N})$ and we reduce to the single Dirac case since $\ov{C_N}(s \delta_x + t \delta_y) = \ov{C_N}((s+t) \delta_x)$.  \end{remark}

\section{Duality and \texorpdfstring{$\Gamma$}{Gamma}-convergence}  \label{sec:limits}

\subsection{Duality}

In the following  the space of bounded Borel measures $\Meas(\R^d)$ is identified as the dual of $C_0(\R^d)$ and
we see $C_N$ as a convex functional defined on the whole space $\Meas(\R^d)$ by setting $C_N(\rho)=+\infty$ if $\rho\notin \Prob(\R^d)$.

By a classical result (see (e) in\eqref{prop-Fenchel}), the relaxed functional $\ov{C_N}$  can be recovered as the Fenchel biconjugate of $C_N$, that is:
\begin{equation}\label{dualrelax}
\overline{C_N}(\rho) = M_N^*(\rho) = \sup_{v\in C_0(\R^d)} \gra{\bra{v, \rho} - M_N(v)} .
\end{equation}
where  $M_N \colon C_0(\R^d) \to [-\infty,+\infty]$ the Fenchel conjugate of $C_N$, reeds
$$ M_N(v) \eqdef C_N^*(v) = \sup_{\rho \in \Prob(\R^d)} \gra{\bra{v, \rho} - C_N(\rho)}. $$
The existence of an optimal Lipschitz  potential $v$ in \eqref{dualrelax} has been proved when $\ell$ is a Coulomb type interaction
under mild assumptions (\emph{cf.} \cite{buttazzo2018continuity} for the case $\|\rho\| = 1$ and \cite{bouchitte2020relaxed} for the case $\|\rho\| < 1$).
 
A characterization of $M_N$ is available in terms of a maximization problem over configurations of $N$ points in $\R^d$.
Recalling the notation $S_N v(x)= \frac{1}{N} \sum_{i=1}^N v(x_i)$ for $x=(x_1, \dotsc, x_N)$,  we have the following result:
\begin{lemma}\label{MN+}
	For every $v \in C_0(\R^d)$,
	\begin{equation} \label{eq:MNdefinition}
	M_N(v) = \sup \gra{S_N v(x) - c_N(x) \st x \in (\R^d)^N}.
	\end{equation}
	It follows that $M_N(v) = M_N(v_+) \ge 0$. In addition, we have:
	\begin{equation} \label{MNbounds}
		\frac{1}{N} \sup v_+ \le M_N(v) \le \sup v_+.
	\end{equation}
\end{lemma}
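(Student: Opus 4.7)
My plan is to establish the formula \eqref{eq:MNdefinition} first, then derive the identity $M_N(v) = M_N(v_+) \ge 0$ and the bounds \eqref{MNbounds} as consequences. Throughout I use the defining identities $M_N(v) = \sup_{\rho \in \Prob(\R^d)}\gra{\bra{v,\rho}-C_N(\rho)}$ and $C_N(\rho) = \inf_{P\in \Pi_N(\rho)} \int c_N\,dP$.

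For \eqref{eq:MNdefinition} I would prove both inequalities. For ``$\ge$'', I test against the atomic probability $\rho_x = \frac{1}{N}\sum_{i=1}^N \delta_{x_i}$ and the symmetrized plan $P_x = \Sym\pa{\delta_{(x_1,\dots,x_N)}}$, whose marginals all equal $\rho_x$. Since $c_N$ is symmetric, $\int c_N\,dP_x = c_N(x)$, so $C_N(\rho_x) \le c_N(x)$, while $\bra{v,\rho_x} = S_N v(x)$; hence $M_N(v) \ge S_N v(x) - c_N(x)$. For ``$\le$'', the marginal condition $(\pi_j)_\sharp P = \rho$ gives $\int v(x_j)\,dP = \bra{v,\rho}$ for each $j$, so averaging over $j$ yields $\int S_N v\,dP = \bra{v,\rho}$. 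Therefore $\bra{v,\rho} - \int c_N\,dP = \int (S_N v - c_N)\,dP \le \sup_{(\R^d)^N}(S_N v - c_N)$, and taking the inf over $P$ followed by the sup over $\rho$ closes the argument.

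With \eqref{eq:MNdefinition} at hand, the identity $M_N(v) = M_N(v_+)$ is the main point. The direction $M_N(v_+) \ge M_N(v)$ is immediate since $v_+ \ge v$ pointwise. For the reverse, fix $x \in (\R^d)^N$ and set $I = \gra{i \st v(x_i) \ge 0}$; for $i \notin I$ replace $x_i$ by $n\,e_i$ along mutually distinct unit vectors $e_i$, leaving the other coordinates untouched, to obtain a sequence $x^{(n)}$. By (H3) every pairwise $\ell$-interaction involving a diverging index vanishes as $n \to \infty$, and $v \in C_0(\R^d)$ forces $v(x^{(n)}_i) \to 0$ for $i \notin I$. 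Consequently
\[ \lim_n \bigl(S_N v(x^{(n)}) - c_N(x^{(n)})\bigr) = S_N v_+(x) - \frac{2}{N(N-1)}\sum_{\substack{i<j \\ i,j \in I}} \ell(|x_i-x_j|) \ge S_N v_+(x) - c_N(x), \]
where the last step uses $\ell \ge 0$ from (H1) to discard the missing interactions. Invoking \eqref{eq:MNdefinition} and taking the sup over $x$ gives $M_N(v) \ge M_N(v_+)$. Nonnegativity $M_N(v_+) \ge 0$ is obtained by the same trick applied to \emph{all} coordinates: the configuration $x^{(n)} = (n e_1,\dots, n e_N)$ makes both $S_N v_+(x^{(n)})$ and $c_N(x^{(n)})$ tend to zero.

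The bounds \eqref{MNbounds} are then immediate from \eqref{eq:MNdefinition}. The upper bound follows from $S_N v - c_N \le S_N v_+ \le \sup v_+$, using $c_N \ge 0$. For the lower bound, assuming $\sup v_+ > 0$ (otherwise it is trivial by nonnegativity), pick $y \in \R^d$ with $v(y)$ arbitrarily close to $\sup v_+$ and test with $x_1 = y$ and $x_i = n\, e_i$ for $i \ge 2$ along pairwise distinct unit vectors; in the limit $S_N v(x) \to v(y)/N$ while $c_N(x) \to 0$. The only technical content, already used three times above, is the vanishing of $\ell$-interactions along pairwise divergent sequences, which is precisely the content of (H3); once this mechanism is in place the rest of the proof is direct bookkeeping.
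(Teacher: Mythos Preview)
Your proof is correct and follows essentially the same approach as the paper's: both establish \eqref{eq:MNdefinition} by testing against empirical measures and their symmetrized plans, then derive $M_N(v)=M_N(v_+)$ and the bounds by sending appropriate coordinates to infinity using (H3). One trivial caveat: your construction ``$x_i = n\,e_i$ along mutually distinct unit vectors'' fails when $d=1$ and more than two indices must diverge, but replacing it by, e.g., $x_i = n\cdot i\, e$ for a fixed direction $e$ fixes this immediately.
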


\begin{remark} \label{MN<}
	If $\ell(0) > 0$, we have the strict inequality $M_N(v) < \sup v_+$  for any $v$ whose positive maximum is attained at a unique point
	$\bar x$. Indeed the  equality $M_N(v) = \sup v_+$  would imply that, at some point $x=(x_1,\cdots, x_N)$,  we have  $M_N(v)(x) = \max v_+$ while $c_N(x) = 0$,  which is not possible since all $x_i$ would coincide with $\bar x$ so that $c_N(x)=\ell(0)$.
\end{remark}

\begin{proof}
	Given $x_1, \dotsc, x_N \in \R^d$, let
	\[ \rho = \frac{1}{N} \sum_{j = 1}^N \delta_{x_j} \quad \text{and} \quad P = \frac{1}{N!} \sum_{\sigma \in \mathfrak{S}_N} \delta_{x_{\sigma(1)}} \otimes \dotsb \otimes \delta_{x_{\sigma(N)}}. \]
	Observe that $\rho \in \Prob(\R^d)$, and $P \in \Pi_N(\rho)$. Hence
	\begin{ieee*}{rCl}
		S_N v(x_1, \dotsc, x_N) - c_N(x_1, \dotsc, x_N) & = & \int v d\rho - \int c_N dP \\
		& \leq & \int v d\rho - C_N(\rho) \leq M_N(v),
	\end{ieee*}
	which gives an inequality.
	On the other hand, for any $\rho \in \Prob(\R^d)$, if $P \in \Pi_N(\rho)$ is optimal in \eqref{eq:OTproblem} one has
	\[ \int v d\rho - C_N(\rho) = \int (S_N v - c_N) dP \leq \sup (S_N v - c_N) \]
	Passing to the supremum on the left-hand side one gets the converse inequality whence  \eqref{eq:MNdefinition}.
	
	The fact that $M_N(v)\ge 0$ is then straightforward by sending all $x_k$ to infinity in such a way that $c_N(x_1, \dots, x_N)\to 0$.
	
	On the other hand, we have obviously $M_N(v)\le M_N(v_+)$. To show the opposite inequality let us fix $\e>0$ and take $(x_1, \dotsc, x_N) \in (\R^d)^N$ such that
	\[ S_N v_+(x_1, \dotsc, x_N) - c_N(x_1, \dotsc, x_N)\ge M_N(v_+) -\e. \]
	
	Then sending to infinity every $x_k$ such that $v(x_k) < 0$ while keeping the other $x_j$'s fixed will increase the value of $S_N(v)$ to $S_N(v_+)$ and decrease the value of $c_N(x_1, \dotsc, x_N)$ (since all terms of the kind $\ell(\abs{x_j-x_k})$ will vanish). 
	Accordingly we get $M_N(v) \ge S_N(v_+)- c_N(x_1, \dotsc, x_N)\ge M_N(v_+) -\e$, hence the desired inequality since $\e$ is arbitrary.
	
	To show \eqref{MNbounds}, we may now assume that $v\ge 0$. Then the upperbound $M_N(v)\ge \sup v$ is trivial. On the other hand, by selecting $x_1$ such that $v(x_1) = \max v$ and by sending the other $x_j$'s to infinity, we conclude with the lower bound $M_N(v) \ge \frac{1}{N} v(x_1)= \frac{1}{N} \sup v.$
\end{proof}

In the same spirit as of \autoref{MN+}, we have an alternative lower bound inequality for $M_N(v)$ namely:
\begin{lemma} \label{MN-MK-inequality}
For every $2 \leq K \leq N$ one has
	\begin{equation} \label{MNMK} M_N(v) \geq \frac{K(K-1)}{N(N-1)} M_K \left( \frac{N-1}{K-1} v \right). \end{equation}
\end{lemma}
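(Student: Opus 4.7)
The plan is to use the configuration-based characterization of $M_N$ from \autoref{MN+} and construct an explicit near-optimal competitor for $M_N(v)$ out of a near-optimal configuration for $M_K\bigl(\tfrac{N-1}{K-1}v\bigr)$ by adjoining $N-K$ points that are sent to infinity.

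Concretely, fix $\eps > 0$ and pick $y = (y_1, \dotsc, y_K) \in (\R^d)^K$ with
\[ S_K\!\left(\tfrac{N-1}{K-1} v\right)(y) - c_K(y) \ge M_K\!\left(\tfrac{N-1}{K-1} v\right) - \eps. \]
Then I would choose auxiliary points $z_{K+1}, \dotsc, z_N \in \R^d$ lying on a sphere of very large radius $R$ with angular separations so large that, for a prescribed $\d > 0$: (i) $|v(z_j)| \le \d$ for all $j \ge K+1$, which is possible because $v \in C_0(\R^d)$; and (ii) $\ell(|z_i - z_j|)\le \d$ for $i \ne j$ in $\{K+1,\dotsc,N\}$ and $\ell(|y_i - z_j|) \le \d$ for $i \le K$, $j > K$, which is possible by (H3). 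This is the only part that requires care, but it is completely standard.

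The key algebraic step is then to evaluate the competitor $x=(y_1,\dotsc,y_K,z_{K+1},\dotsc,z_N)$ in \eqref{eq:MNdefinition}. Up to an error $O(\d)$ (independent of $\eps$), one has
\[ S_N v(x) - c_N(x) = \frac{1}{N}\sum_{j=1}^K v(y_j) - \frac{2}{N(N-1)} \sum_{1\le i<j\le K} \ell(|y_i-y_j|) + O(\d). \]
Rewriting the right-hand side as $\frac{K}{N} S_K v(y) - \frac{K(K-1)}{N(N-1)} c_K(y) + O(\d)$ and factoring out $\frac{K(K-1)}{N(N-1)}$, the crucial identity
\[ \frac{K}{N} \cdot \frac{N(N-1)}{K(K-1)} = \frac{N-1}{K-1} \]
yields
\[ S_N v(x) - c_N(x) = \frac{K(K-1)}{N(N-1)} \left[ S_K\!\left(\tfrac{N-1}{K-1}v\right)(y) - c_K(y) \right] + O(\d). \]

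Since $M_N(v) \ge S_N v(x) - c_N(x)$ for this admissible $x$, sending $\d \to 0$ (via $R \to \infty$) and then $\eps \to 0$ gives the announced inequality \eqref{MNMK}. The main obstacle, if any, is merely the bookkeeping to control the cross terms involving the far-away points $z_j$; once (H3) and $v \in C_0(\R^d)$ are in place, this reduces to choosing the $z_j$'s sufficiently spread out on a large enough sphere, and no further hypotheses on $\ell$ are required.
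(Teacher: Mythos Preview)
Your proof is correct and follows essentially the same approach as the paper: take a near-optimal $K$-point configuration for $M_K\bigl(\tfrac{N-1}{K-1}v\bigr)$, append $N-K$ points sent to infinity so that (H3) and $v\in C_0(\R^d)$ kill the extra terms, and then perform the same algebraic factoring via $\tfrac{K}{N}\cdot\tfrac{N(N-1)}{K(K-1)}=\tfrac{N-1}{K-1}$. The only cosmetic difference is that you quantify the far-away contributions by an explicit $\delta$ before sending it to zero, whereas the paper simply passes to the limit directly.
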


\begin{proof}
	Let $\{x_1, \dotsc, x_K\}$ be an optimal $K$-points configuration up to a small $\e>0$ for $M_K\left( \frac{N-1}{K-1} v \right)$ as defined in \eqref{eq:MNdefinition}. Then we may complete to obtain a $N$-points configuration by adding $x_{K+1}, \dotsc, x_N$ tending to $\infty$ so that, in view of $(H3)$, we have: 
	\begin{ieee*}{+rCl+x*}
		M_N(v) & \geq & \frac{1}{N} \sum_{j = 1}^K v(x_j) - \frac{2}{N(N-1)} \sum_{0 \leq i < j \leq K} \ell(\abs{x_i-x_j}) \\
		& = & \frac{K(K-1)}{N(N-1)} \pa{ \frac{N-1}{K(K-1)} \sum_{j = 1}^K v(x_j) - \frac{2}{K(K-1)} \sum_{0 \leq i < j \leq K} \ell(\abs{x_i-x_j})} \\
		& = & \frac{K(K-1)}{N(N-1)} \pa{  \frac{1}{K} \sum_{j = 1}^K \frac{N-1}{K-1} v(x_j) - c_K(x_1, \dotsc, x_K) } \\
		& \ge  & \frac{K(K-1)}{N(N-1)} \left( M_K \pa{ \frac{N-1}{K-1} v } -\e\right) ,
	\end{ieee*}
	hence \eqref{MNMK} by sending $\e\to 0$.  
\end{proof}
\medskip
	
In view of the equality $M_N(v)=M_N(v_+)$ (see \autoref{MN+}), the supremum in \eqref{dualrelax} can be restricted to non negative $v$ and rewritten in the alternative form
\begin{equation}\label{alternative}
\overline{C_N}(\rho) = \sup_{\lambda\ge0, v\in C_0^+(\R^d)} \gra{ \int v d\rho - \lambda \st S_N(v) \le c_N + \lambda \quad \text{in $(\R^d)^N$}} 
\end{equation}
from which we deduce  the following monotonocity property: 

\begin{lemma}\label{mono+add} Let $\rho, \nu \in  \Prob_-(\R^d)$ such that $\rho\le \nu$. Then $\ov{C_N}(\rho) \le \ov{C_N}(\nu).$
\end{lemma}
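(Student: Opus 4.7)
The plan is to read off the monotonicity directly from the dual representation \eqref{alternative}, which is precisely why that reformulation is convenient here. The set of admissible dual pairs $(\lambda, v)$ in \eqref{alternative} is intrinsic (it depends only on $c_N$ and $\ell$), so it is identical for $\rho$ and for $\nu$; what changes with the measure is only the linear objective $\int v\, d\rho - \lambda$.

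Concretely, I would fix an arbitrary admissible pair $(\lambda, v)$ with $\lambda \geq 0$, $v \in C_0^+(\R^d)$ and $S_N(v) \leq c_N + \lambda$ on $(\R^d)^N$. Since $\nu - \rho$ is a non-negative Borel measure (this is the content of $\rho \leq \nu$ in $\Prob_-(\R^d)$) and $v \geq 0$, integration gives
\[
\int v\, d\rho \;\leq\; \int v\, d\nu,
\]
hence $\int v\, d\rho - \lambda \leq \int v\, d\nu - \lambda \leq \overline{C_N}(\nu)$, where the last inequality uses the admissibility of $(\lambda,v)$ in the dual problem for $\nu$. Taking the supremum over all such $(\lambda, v)$ on the left and invoking \eqref{alternative} for $\rho$ yields $\overline{C_N}(\rho) \leq \overline{C_N}(\nu)$.

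There is no real obstacle; the one point that must be checked is that the reduction to non-negative potentials $v \in C_0^+(\R^d)$ is legitimate, but this was established just above via $M_N(v) = M_N(v_+)$ in \autoref{MN+}. Without that reduction one could not bound $\int v\, d(\nu-\rho)$ from below by $0$, and the argument would collapse, so it is worth emphasizing that the form \eqref{alternative} (rather than the original dual \eqref{dualrelax}) is exactly what makes the proof immediate.
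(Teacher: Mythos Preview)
Your proof is correct and follows exactly the same route as the paper: both arguments use the dual formulation \eqref{alternative} together with the fact that $\int v\, d\rho \le \int v\, d\nu$ for every $v\in C_0^+(\R^d)$ when $\rho\le\nu$. Your version is just a slightly more detailed write-up of the paper's two-line proof, including the explicit remark that the reduction to non-negative potentials (via \autoref{MN+}) is what makes the argument work.
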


\begin{proof} 
Since $ \int v d\rho \ge \int v d\rho$ for every $v\in C_0^+(\R^d)$, the inequality $\ov{C_N}(\rho) \le \ov{C_N}(\nu)$ follows from
\eqref{alternative}.
\end{proof}



\begin{remark} It turns out that, by applying \cite[Proposition 3.8]{bouchitte2020relaxed}, the pointwise inequality constraint appearing in \eqref{alternative} can be drastically reduced in practice and replaced by the same condition $S_N(v)\le c_N + \lambda$ holding $\tilde{\rho}^{\otimes N}$ a.e. on $X^d$, where $X=\R^d\cup\{\omega\}$ is the compactified space introduced in Section \ref{2formulae} (with $c_N$ and $S_N(v)$ being extended to $X^d$). 

Accordingly, for a discrete measure $\rho = \sum_{i=1}^m t_i \delta_{x_i}$, we are led to a linear programming problem involving $m+1$ unknowns in $\R_+$, namely $v_i = v(x_i)$ ($1\le i\le m$) and $v_{m+1}=\lambda$.
For instance, in the case of a two Dirac masses measure as studied in Subsection \ref{Dirac}, we are led to:
\[ \ov{C_N}(s \delta_x + t \delta_y) = \sup \gra{s v_1 + t v_2 + v_3}, \] 
subject to the following constraints holding for every $k\ge 0, l\ge 0$ with $k+l\le N$:
\[ \frac{k}{N} v_1 +\frac{l}{N} v_2 + \pa{1 - \frac{k+l}{N}} v_3 \le \frac{k(k-1) + l(l-1)}{N(N-1)} \ell(0) + \frac{2kl}{N(N-1)} \ell(|x-y|) \]
\end{remark}

\subsection{Pointwise and \texorpdfstring{$\Gamma$}{Gamma}-convergence}

The asymptotic behavior of  the functionals  $C_N$ and $M_N$ as $N\to\infty$ will be a direct consequence of the two following results.
   
\begin{lemma}[monotonicity] \label{monotonicity}
	The sequences $(C_N)_{N\geq 1}$ and $(\ov{C_N})_{N\geq 1}$ are monotone non-decreasing. The sequence $(M_N)_{N\geq 1}$ is monotone non-increasing.
\end{lemma}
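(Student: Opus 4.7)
The plan is to establish the monotonicity of $(C_N)_{N\geq 1}$ directly, by a symmetrization-and-projection argument on transport plans; then to deduce the monotonicity of $(\overline{C_N})_{N\geq 1}$ by stability of the relaxation; and finally to derive the monotonicity of $(M_N)_{N\geq 1}$ from the order-reversing property of Fenchel conjugation.

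First I would fix $\rho\in\Prob(\R^d)$ and, given $\varepsilon>0$, choose a near-optimal plan $P\in\Pi_{N+1}(\rho)$ for $C_{N+1}(\rho)$. Because the cost $c_{N+1}$ is invariant under permutations of the $N+1$ arguments, one may replace $P$ by its symmetrization $\Sym(P)$ without increasing the integral $\int c_{N+1}\,dP$ and without altering the one-dimensional marginals (which all remain equal to $\rho$). For such a symmetric plan the pairwise marginal on $(x_i,x_j)$ is the same for every $i\neq j$, so
\[ \int c_{N+1}\,dP \;=\; \int \ell(|x_1-x_2|)\,dP. \]
Let $P'\in\Pi_N(\rho)$ be the push-forward of $P$ by the projection $(x_1,\dots,x_{N+1})\mapsto(x_1,\dots,x_N)$. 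Then $P'$ is still symmetric and inherits the same $(x_1,x_2)$-marginal as $P$, whence
\[ C_N(\rho)\;\le\;\int c_N\,dP' \;=\;\int \ell(|x_1-x_2|)\,dP \;=\;\int c_{N+1}\,dP\;\le\;C_{N+1}(\rho)+\varepsilon. \]
Letting $\varepsilon\to 0$ yields $C_N(\rho)\le C_{N+1}(\rho)$.

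To pass to the relaxed functionals I would invoke the identification $\overline{C_N}=C_N^{**}$ already recorded in \eqref{dualrelax}. Extending both $C_N$ and $C_{N+1}$ to all of $\Meas(\R^d)$ by $+\infty$ off $\Prob(\R^d)$, the pointwise inequality $C_N\le C_{N+1}$ is preserved by taking the Fenchel biconjugate, so $\overline{C_N}\le \overline{C_{N+1}}$ on $\Prob_-(\R^d)$. (Alternatively, starting from any recovery sequence $\rho_n\wconv\rho$ with $\liminf_n C_{N+1}(\rho_n)=\overline{C_{N+1}}(\rho)$, the inequality $C_N(\rho_n)\le C_{N+1}(\rho_n)$ immediately gives $\overline{C_N}(\rho)\le\overline{C_{N+1}}(\rho)$.)

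Finally, the monotonicity of $(M_N)$ is a direct consequence of the order-reversing property of the Fenchel transform: from $C_N\le C_{N+1}$ one gets $M_N=C_N^{*}\ge C_{N+1}^{*}=M_{N+1}$. The only mildly delicate point in the whole argument is the first step, where the symmetry of $P$ is essential for the combinatorial prefactors in $c_N$ and $c_{N+1}$ to cancel against the number of pairs; without this symmetrization the projected plan $P'$ need not have a smaller $c_N$-cost than the $c_{N+1}$-cost of $P$.
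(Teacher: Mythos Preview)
Your proof is correct and follows essentially the same approach as the paper: both arguments reduce to the observation that the two-body marginal of a symmetric $(N{+}1)$-plan is also the two-body marginal of an $N$-plan (its projection), so that the set of admissible two-body marginals shrinks as $N$ grows. The paper phrases this compactly via the ``$N$-representability constraint'' on $\Pi_{2,N}(\rho)$, while you spell out the symmetrize-then-project construction explicitly; the deduction for $(\overline{C_N})$ and $(M_N)$ via biconjugation and order-reversal of the Fenchel transform is identical.
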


Note that, in order to get the monotonicity property of $C_N$ with respect to the number $N$ of marginals, the normalization factor $\frac{1}{N(N-1)}$ used in the definition \eqref{def:c_N} is essential.

\begin{proof}
	We observe that
	\[ C_N (\rho)= \inf_{\sigma \in \Pi_{2,N}(\rho)} \int \ell(\abs{x_1-x_2}) d\sigma(x_1,x_2), \]
	where $\Pi_{2,N}(\rho)$ denotes the set of 2-body marginals of measures in $\Pi_N(\rho)$. This $N$-representability constraint becomes more restrictive as $N$ increases, whence the claimed monotonicity property of $(C_N)$, hence also for $\ov{C_N}$. By passing to the Fenchel conjugates, it follows that
	the sequence $(M_N)$ is non increasing.
\end{proof}

\begin{lemma} \label{equiLip}
	The family $\{M_N, N\geq 2\}$ is equi-Lipschitz, with Lipschitz constant equal to 1.
\end{lemma}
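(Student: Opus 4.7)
The plan is to exploit the representation of $M_N$ as a supremum of very simple affine functionals of $v$. Recall from \autoref{MN+} that
\[ M_N(v) = \sup_{x \in (\R^d)^N} \bigl\{ S_N v(x) - c_N(x) \bigr\}, \]
so $M_N$ is the pointwise supremum (over the index set $x \in (\R^d)^N$) of the family of maps $v \mapsto S_N v(x) - c_N(x)$. Each such map is affine in $v$: the constant part is $-c_N(x)$ and the linear part is $v \mapsto S_N v(x) = \tfrac{1}{N}\sum_{j=1}^N v(x_j)$.

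The key observation is that the linear part has operator norm at most $1$ on $C_0(\R^d)$ equipped with the sup norm: indeed for any $v \in C_0(\R^d)$,
\[ |S_N v(x)| \le \frac{1}{N}\sum_{j=1}^N |v(x_j)| \le \|v\|_\infty. \]
Consequently, for any $v,w \in C_0(\R^d)$ and any $x \in (\R^d)^N$,
\[ S_N v(x) - c_N(x) \le S_N w(x) - c_N(x) + S_N(v-w)(x) \le S_N w(x) - c_N(x) + \|v-w\|_\infty. \]
Taking the supremum over $x$ on the left-hand side gives $M_N(v) \le M_N(w) + \|v-w\|_\infty$, and swapping the roles of $v$ and $w$ yields the reverse inequality.

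Therefore $|M_N(v) - M_N(w)| \le \|v-w\|_\infty$ for every $N \ge 2$, proving the equi-Lipschitz bound with constant $1$. There is no real obstacle: the result is a general fact that a supremum of $1$-Lipschitz affine functionals is again $1$-Lipschitz, and the only thing to check is that the linear functionals $v \mapsto S_N v(x)$ are indeed non-expansive, which is immediate from the convex combination structure of $S_N$. Note that the constants $c_N(x)$, although possibly infinite on diagonals, never affect the Lipschitz estimate since they cancel out in the difference $S_N v(x) - S_N w(x)$.
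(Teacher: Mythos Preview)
Your proof is correct and follows essentially the same approach as the paper's: both exploit the representation $M_N(v)=\sup_x\{S_Nv(x)-c_N(x)\}$ and the fact that $|S_N(v-w)(x)|\le\|v-w\|_\infty$, then swap the roles of $v$ and $w$. The only cosmetic difference is that the paper picks an $\epsilon$-optimal $x$ and lets $\epsilon\to 0$, whereas you take the supremum directly.
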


\begin{proof}
	Let $v_1, v_2 \in C_0(\R^d)$. Take $x_1, \dotsc, x_N \in \R^d$ optimal up to a threshold $\epsilon$ for $M_N(v_1)$ in \eqref{eq:MNdefinition}. Then
	\[ M_N(v_1) - M_N(v_2) \leq \frac{1}{N} \sum_{j = 1}^N \left( v_1(x_j) - v_2(x_j) \right) + \epsilon \leq \norm{v_1 - v_2}_{\infty} + \epsilon. \]
	By letting $\epsilon \to 0$, and then switching the roles of $v_1$ and $v_2$ we get the thesis.
\end{proof}

From \autoref{monotonicity}, we infer the existence of pointwise limits for the functionals $M_N$ and $\overline{C_N}$. In the following we will denote
\begin{equation}\label{def:MN-CN}
M_{\infty}(v) \eqdef \lim_{N \to \infty} M_N(v), \quad  C_{\infty}(\rho) \eqdef \lim_{N \to \infty} \overline{C_N}(\rho).
\end{equation}

Clearly $C_\infty$ defines a convex lower semicontinous functional on $\Meas(\R^d)$ whose domain is a subset of $\Prob_-(\R^d)$.  
Besides  $M_\infty$ enjoys the same properties as $M_N$, namely  to be a convex $1$-Lipschitz continuous functional on $C_0(\R^d)$  depending only of the positive part of its argument.


\begin{lemma} \label{CIMIduality}
We have the duality relations
$$ C_\infty = M_\infty^*, \quad M_\infty = C_\infty^*.$$
Moreover, if $\|\rho\| = 1$, then $C_\infty(\rho) = \lim_{N \to \infty} C_N(\rho)$.
\end{lemma}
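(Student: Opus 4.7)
The plan is to exploit the monotone convergence $\overline{C_N}\nearrow C_\infty$ and $M_N\searrow M_\infty$ (\autoref{monotonicity}) together with the known duality $\overline{C_N} = M_N^*$ (see \eqref{dualrelax}) and the elementary fact that the Legendre transform interchanges supremum and infimum in the obvious direction.

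First, I would establish the identity $M_\infty^{*}=C_\infty$. The computation is direct: for every $\rho\in\Meas(\R^d)$,
\begin{align*}
 M_\infty^*(\rho) &= \sup_{v\in C_0(\R^d)} \bigl\{\langle v,\rho\rangle - \inf_{N} M_N(v)\bigr\}\\
 &= \sup_{v\in C_0(\R^d)} \sup_{N} \bigl\{\langle v,\rho\rangle - M_N(v)\bigr\}\\
 &= \sup_N M_N^*(\rho) = \sup_N \overline{C_N}(\rho) = C_\infty(\rho),
\end{align*}
where the last equality uses that $(\overline{C_N})$ is nondecreasing with pointwise limit $C_\infty$ by \eqref{def:MN-CN}. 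This gives the first duality relation.

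For the second relation $M_\infty=C_\infty^*$, it suffices, by Fenchel--Moreau, to check that $M_\infty$ is convex, lower semi-continuous and proper on $C_0(\R^d)$. Convexity is inherited from the $M_N$'s; \autoref{equiLip} implies that $M_\infty$ is $1$-Lipschitz on $C_0(\R^d)$ (hence continuous); and properness follows from $0\le M_\infty(v)\le \sup v_+$ (cf.\ \eqref{MNbounds}), which yields in particular $M_\infty(0)=0$. Applying Fenchel--Moreau and the previous step,
\[
 M_\infty = M_\infty^{**} = (M_\infty^*)^* = C_\infty^*,
\]
as desired.

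Finally, the pointwise convergence statement for $\rho\in\Prob(\R^d)$ is essentially tautological: as noted right after \eqref{relaxed-CN}, whenever $\rho$ is a probability the only admissible decomposition in the stratification formula is $a_N=1$, giving $\overline{C_N}(\rho)=C_N(\rho)$. Hence by the definition \eqref{def:MN-CN},
\[
 C_\infty(\rho) = \lim_{N\to\infty} \overline{C_N}(\rho) = \lim_{N\to\infty} C_N(\rho).
\]
No step presents a serious obstacle; the only point requiring a little care is verifying that $M_\infty$ is a legitimate (convex, l.s.c., proper) conjugate function so that Fenchel--Moreau applies, which is handled by the equi-Lipschitz bound of \autoref{equiLip} together with \eqref{MNbounds}.
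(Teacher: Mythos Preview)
Your proof is correct and follows essentially the same route as the paper: both derive $C_\infty=M_\infty^*$ from the general identity $(\inf_N M_N)^*=\sup_N M_N^*$ combined with $\overline{C_N}=M_N^*$, then obtain $M_\infty=C_\infty^*$ via Fenchel--Moreau using that $M_\infty$ is convex and continuous, and finally note $\overline{C_N}(\rho)=C_N(\rho)$ for $\|\rho\|=1$. Your version simply spells out more carefully why $M_\infty$ satisfies the hypotheses of Fenchel--Moreau.
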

	
\begin{proof} As $\overline{C_N}= M_N^*$, we have $C_\infty = \sup_N M_N^*$ while $M_\infty = \inf_N M_N$.
Then the first equality follows from the general identity $(\inf_N M_N)^* = \sup_N M_N^*$. As $M_\infty$ is continuous and convex, we deduce that $C_\infty^* = (M_\infty)^{**}= M_\infty$.
The last statement follows from \eqref{def:MN-CN} since $\ov{C_N}(\rho)= C_N(\rho)$ for $\|\rho\|=1$.
\end{proof}

As a consequence of the above results and of some classical results in $\Gamma$-convergence theory for convex functionals, we
deduce:

%
%


\begin{thm} \label{Gconvergence}
 The functionals $M_N$ $\Gamma$-converge to  $M_\infty$ while both functionals $C_N$ and $\overline{C_N}$ are $\Gamma$-converging to  $C_\infty$.	
\end{thm}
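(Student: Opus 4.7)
The plan is to exploit two classical facts of $\Gamma$-convergence theory for convex functionals: (i) an equi-Lipschitz family of continuous functionals that converges pointwise also $\Gamma$-converges, in the strong topology, to its pointwise limit; and (ii) a non-decreasing sequence of lower semi-continuous convex functionals $\Gamma$-converges to its pointwise supremum whenever that supremum is itself lower semi-continuous. All the structural ingredients have been assembled: monotonicity (\autoref{monotonicity}), equi-Lipschitz continuity of $\{M_N\}$ with constant $1$ (\autoref{equiLip}), the automatic weak* lower semi-continuity of $C_\infty = \sup_N \overline{C_N}$ as a supremum of weak* l.s.c.\ functions, and the continuity of $M_\infty$ inherited from the equi-Lipschitz bound.

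For $M_N \Gconv M_\infty$ on $C_0(\R^d)$ with the norm topology, I would argue that for any $v_N \to v$ in norm, \autoref{equiLip} gives $\abs{M_N(v_N) - M_N(v)} \le \norm{v_N - v}_\infty \to 0$, so $\liminf_N M_N(v_N) = \lim_N M_N(v) = M_\infty(v)$; this is the $\Gamma$-liminf inequality, and the constant sequence $v_N \equiv v$ realizes the matching $\Gamma$-limsup. For $\overline{C_N} \Gconv C_\infty$, I would work on $\Prob_-(\R^d)$, which is weak* compact and metrizable so that sequential characterizations of $\Gamma$-convergence are legitimate. Given $\rho_N \wconv \rho$ and any fixed $K \ge 2$, monotonicity yields $\overline{C_N}(\rho_N) \ge \overline{C_K}(\rho_N)$ for $N \ge K$, and the weak* lower semi-continuity of $\overline{C_K}$ gives $\liminf_N \overline{C_N}(\rho_N) \ge \overline{C_K}(\rho)$; taking $\sup_K$ produces the $\Gamma$-liminf inequality, while the constant recovery sequence $\rho_N \equiv \rho$ supplies the upper bound since $\overline{C_N}(\rho) \to C_\infty(\rho)$ by definition.

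The only step that requires some care is the $\Gamma$-limsup inequality for $C_N$, since a constant sequence is inadmissible when $\norm{\rho} < 1$ (then $C_N(\rho) = +\infty$). The remedy is a diagonal extraction: by definition of the relaxation, for each $N$ one can pick $\rho^{(N)} \in \Prob(\R^d)$ with $d_*(\rho^{(N)}, \rho) < 1/N$ and $C_N(\rho^{(N)}) \le \overline{C_N}(\rho) + 1/N$, where $d_*$ is any metric compatible with the weak* topology on $\Prob_-(\R^d)$. Then $\rho^{(N)} \wconv \rho$ and $\limsup_N C_N(\rho^{(N)}) \le \lim_N \overline{C_N}(\rho) = C_\infty(\rho)$, as desired. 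The corresponding $\Gamma$-liminf inequality is immediate from $C_N \ge \overline{C_N}$ combined with the previous step. The main obstacle is essentially notational rather than conceptual, namely the need to invoke the metrizability of the weak* topology on the bounded set $\Prob_-(\R^d)$ to justify the sequential diagonal construction, since $\Meas(\R^d)$ itself is not first countable.
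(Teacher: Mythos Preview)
Your proof is correct, but it reaches the conclusion by a different route than the paper. For $M_N\Gconv M_\infty$ you essentially reprove \autoref{prop:pointwise-gamma} by hand, which is fine. The real difference is on the dual side: the paper invokes the Attouch--Az\'e duality (\autoref{prop:attouch-dual}) to transfer the $\Gamma$-convergence of $M_N=C_N^*$ directly to that of $C_N$ on $\Meas(\R^d)$, and then gets $\overline{C_N}$ for free from the general fact that $F_N\Gconv F\iff \overline{F_N}\Gconv F$. You do the opposite: you first prove $\overline{C_N}\Gconv C_\infty$ directly from monotonicity and lower semi-continuity (a clean elementary argument that does not require any duality machinery), and then recover the $\Gamma$-limsup for $C_N$ via a diagonal extraction out of the very definition of the relaxed functional. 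Your route is more self-contained and avoids citing the non-reflexive duality theorem; the paper's route is shorter once that theorem is available and makes transparent why the result is really a corollary of the convergence of the conjugates.
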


\begin{proof} The first statement follows by applying \autoref{prop:pointwise-gamma} with $G_N = M_N$ and $X = C_0(\R^d)$, while the second one follows by applying \autoref{prop:attouch-dual} to	the sequence $(F_N)$ defined on 
$X^*= \Meas(\R^d)$ by setting  $F_N(\rho)= C_N(\rho)$ if $\rho\in \Prob(\R^d)$ and $F_N(\rho)=+\infty$ otherwise.
\end{proof}

\subsection{More properties of \texorpdfstring{$C_\infty$}{Cinfty} and \texorpdfstring{$M_\infty$}{Minfty}}

First of all we observe that the condition $(H4)$
ensures that, for every $r>0$, one has 
\begin{equation} \label{def:Krd}
K(r,d) \eqdef \frac1{(\omega_d r^d)^2}\int \One_{B(0,r)}(x_1) \One_{B(0,r)}(x_2) \ell(\abs{x_1-x_2}) dx_1 dx_2 < +\infty.
\end{equation}
\begin{lemma} \label{CI-dense-domain}
	Assume that $\ell$ satisfies the local integrability assumption (H4). Then the domain of $C_\infty$ is a dense subset of $\Prob_-(\R^d)$. As a consequence, we have
	\[ \lim_{t\to +\infty} \frac{M_\infty(tv)}{t} = \sup v_+ \quad \forall v\in C_0(\R^d). \]
\end{lemma}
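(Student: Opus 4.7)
The two statements are tightly linked: density of $\mathrm{dom}(C_\infty)$ lets us identify the recession function of $M_\infty = C_\infty^*$ as the support function of $\Prob_-(\R^d)$.

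\textbf{Step 1 (density).} The key sub-claim is that every sub-probability $\mu$ with $L^\infty$-density of compact support belongs to $\mathrm{dom}(C_\infty)$. To see this, complete $\mu$ to a probability $\tilde\mu = \mu + \nu$ by adding a sub-probability $\nu$ of mass $1-\|\mu\|$ also having bounded density on a compact set (disjoint from $\supp\mu$ to keep things clean). Using the product plan $\tilde\mu^{\otimes N}\in\Pi_N(\tilde\mu)$, one gets
\[
C_N(\tilde\mu)\le \int c_N\,d\tilde\mu^{\otimes N}=\iint \ell(|x-y|)\,d\tilde\mu(x)\,d\tilde\mu(y),
\]
and this last integral is finite thanks to (H4) (via the constant $K(R,d)$ in \eqref{def:Krd}, since $\tilde\mu$ is bounded on a ball $B(0,R)$). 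The bound is independent of $N$, so $C_\infty(\tilde\mu)=\lim_N \overline{C_N}(\tilde\mu)<+\infty$. Monotonicity (\autoref{mono+add}) applied at each $N$ then yields $C_\infty(\mu)\le C_\infty(\tilde\mu)<+\infty$. Now any $\rho\in\Prob_-(\R^d)$ is the weak* limit of such $\mu_n$: take $\mu_n=\psi_n\,(\rho\ast\phi_n)$ with $\phi_n$ a smooth mollifier and $\psi_n\in C_c^\infty$ a cutoff equal to $1$ on $B(0,n)$. One checks $\|\mu_n\|\le\|\rho\|\le 1$ and $\int v\,d\mu_n\to\int v\,d\rho$ for all $v\in C_0(\R^d)$ by uniform convergence of $(v\psi_n)\ast\check\phi_n$ to $v$. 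Hence $\mathrm{dom}(C_\infty)$ is weak* dense.

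\textbf{Step 2 (consequence).} The upper bound is immediate: passing \eqref{MNbounds} to the limit in $N$ gives $M_\infty(tv)\le \sup (tv)_+=t\sup v_+$, so $\limsup_{t\to\infty} M_\infty(tv)/t\le \sup v_+$. For the matching lower bound, apply the duality $M_\infty=C_\infty^*$ from \autoref{CIMIduality}: for each $\mu\in\mathrm{dom}(C_\infty)$,
\[
\frac{M_\infty(tv)}{t}\ \ge\ \langle v,\mu\rangle -\frac{C_\infty(\mu)}{t}\ \xrightarrow[t\to\infty]{}\ \langle v,\mu\rangle.
\]
Therefore $\liminf_{t\to\infty} M_\infty(tv)/t\ge \sup_{\mu\in\mathrm{dom}(C_\infty)}\langle v,\mu\rangle$. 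Since $v\in C_0(\R^d)$, the linear form $\mu\mapsto\langle v,\mu\rangle$ is weak* continuous, and Step 1 upgrades the supremum to $\sup_{\mu\in\Prob_-(\R^d)}\langle v,\mu\rangle$, which equals $\sup v_+$ (attained in the limit by $\delta_{x_n}$ with $v(x_n)\to \sup v$ when $\sup v>0$, and by $\mu=0$ otherwise). Combining both bounds gives the asserted limit.

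\textbf{Main obstacle.} The content of the proof is concentrated in Step 1, namely producing an approximant for which $C_\infty$ is finite. This is where (H4) enters essentially — without local integrability of $\ell$, the product plan $\tilde\mu^{\otimes N}$ would not give a finite cost even for smooth bounded densities, and the domain could collapse. Completing to a probability and invoking \autoref{mono+add} is the natural device that transforms the product-plan bound for $C_N$ (which is only defined on $\Prob$) into a bound for $\overline{C_N}$ on sub-probabilities. The remainder is a standard recession-function computation once density of the domain is available.
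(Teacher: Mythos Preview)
Your proof is correct and complete, but it takes a somewhat different path from the paper's in Step~1.

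The paper's argument for density is more structural: it observes that the weak* closure $\mathcal{K}$ of $\mathrm{dom}(C_\infty)$ is a convex, weak*-compact subset of $\Prob_-(\R^d)$, and then shows directly that every Dirac mass $\delta_x$ belongs to $\mathcal{K}$ by approximating it with the uniform probability $\rho_r$ on $B(x,r)$ (for which the product plan gives $C_N(\rho_r)\le K(r,d)$ uniformly in $N$). Since $\Prob_-(\R^d)$ is the weak* closed convex hull of the Dirac masses, this forces $\mathcal{K}=\Prob_-(\R^d)$. By contrast, you exhibit a concrete dense subclass of the domain itself (bounded-density compactly-supported sub-probabilities), at the cost of an extra completion step to a probability plus an appeal to the monotonicity Lemma~\ref{mono+add}, followed by a mollification/cutoff approximation. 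Your route yields slightly more (an explicit dense subset of the domain, not merely of its closure), while the paper's route is shorter and avoids both the completion trick and the mollifier verification.

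For Step~2 the two arguments coincide in spirit: the paper invokes the recession-function identity \eqref{recession} from the appendix (which says $(M_\infty)_\infty$ is the support function of $\mathrm{dom}(M_\infty^*)=\mathrm{dom}(C_\infty)$), whereas you reproduce that computation by hand via the Fenchel inequality. Both conclude by identifying the support function of $\overline{\mathrm{dom}(C_\infty)}=\Prob_-(\R^d)$ with $\sup v_+$.
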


\begin{proof}\ By construction the domain of the relaxed functional $\ov{C_N}$ is a subset of $\Prob_-(\R^d)$ and
therefore $C_\infty(\rho)= \sup_N \ov{C_N} (\rho) = +\infty$ if $\rho\notin \Prob_-(\R^d)$. It follows that the closure of $\{\rho \st C_\infty(\rho) < +\infty\}$ is a  weakly* compact convex subset $ \mathcal{K}\subset\Prob_-(\R^d)$.  
Thus the desired density property holds if we can show that $\delta_x \in \mathcal{K}$ for every $x\in\R^d$. Let us consider for every $r>0$ the uniform probability $\rho_r$ on the ball $B(x,r)$ and  the transport plan $P_{N,r} = \underset{N\text{ times}}{\underbrace{\rho_r \otimes \dotsb \otimes \rho_r}}$. We get
\[ C_N(\rho_r) \leq \int c(x_1, \dotsc, x_N) dP_{N,r} = \int \ell(\abs{x_1-x_2}) d(\rho_r \otimes \rho_r)  = K(r,d) < +\infty, \]
with $K(r,d)$ given by \eqref{def:Krd}. Thus $\rho_r \in \mathcal{K}$ while $\rho_r \wconv \delta_x$ as $r\to 0$.
Summarizing we have proved the equality $\mathcal{K}=\Prob_-(\R^d)$. Passing to the support functions, we recover the recession function of $M_\infty$ by applying  \eqref{recession} (with $f=M_\infty$ and $X= C_0(\R^d)$. We are led to:
$$  \lim_{t\to +\infty} \frac{M_\infty(tv)}{t} = \sup_{\rho\in \mathcal{K}}  \bra{v,\rho}  = \sup_{\rho\in \Prob_-(\R^d)}  \bra{v,\rho}= \sup v_+ .$$
  \end{proof}
\begin{lemma} \label{slopeM}
	Assume that $\ell > 0$ on $[0,+\infty)$. Then 
	\[ \lim_{t\to 0^+} \frac{M_\infty(t v)}{t} = 0 \quad \forall v\in C_0(\R^d). \]
\end{lemma}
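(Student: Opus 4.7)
The plan is to use the duality $M_\infty=C_\infty^{*}$ from \autoref{CIMIduality} to express the right-hand slope $\lim_{t\to 0^+}M_\infty(tv)/t$ as the support function of the minimizer set of $C_\infty$, and then to use the positivity of $\ell$ to show that this set reduces to $\gra{0}$.

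Since $M_\infty$ is convex with $M_\infty(0)=0$, the quotient $t\mapsto M_\infty(tv)/t$ is non-decreasing on $(0,+\infty)$, so the limit coincides with the one-sided directional derivative $M_\infty'(0;v)$. For a lsc convex function the standard identity
\[ M_\infty'(0;v)=\sup\gra{\bra{v,\rho}\st \rho\in\partial M_\infty(0)} \]
holds, and the Fenchel characterization $\rho\in\partial M_\infty(0)\iff 0\in\partial C_\infty(\rho)$ combined with $C_\infty\geq 0$ and $C_\infty(0)=0$ yields $\partial M_\infty(0)=\gra{\rho\in\Prob_-(\R^d)\st C_\infty(\rho)=0}$. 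Therefore the statement reduces to showing that $C_\infty(\mu)>0$ for every nonzero $\mu\in\Prob_-(\R^d)$.

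To obtain this positivity, fix a nonzero $\mu$ and pick a compact $K\subset\R^d$ with $\sigma:=\mu(K)>0$. Set $\mu_K:=\mu\res K$. By the monotonicity in \autoref{mono+add}, $\ov{C_N}(\mu)\ge\ov{C_N}(\mu_K)$ for every $N$, hence $C_\infty(\mu)\ge C_\infty(\mu_K)$, so it suffices to find $N$ with $\ov{C_N}(\mu_K)>0$. In any admissible decomposition in the stratification formula \eqref{relaxed-CN} applied to $\mu_K$, each probability $\mu_i$ is forced to be supported in $K$; writing $D:=\mathrm{diam}(K)$ and $\ell_*(D):=\min_{[0,D]}\ell$, one has $C_i(\mu_i)\ge\ell_*(D)$ for every $i\geq 2$. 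Using $i(i-1)\geq i$ for $i\ge 2$ and the mass constraint $\sum_{i=1}^N(i/N)a_i=\sigma$ (which, together with $a_1\le 1$, yields $\sum_{i\geq 2}(i/N)a_i\geq\sigma-1/N$), one deduces
\[ \ov{C_N}(\mu_K)\;\ge\;\frac{\sigma-1/N}{N-1}\,\ell_*(D), \]
which is strictly positive as soon as $N>1/\sigma$ provided $\ell_*(D)>0$.

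The main (and in fact only) technical point is the strict positivity $\ell_*(D)>0$: by lower semi-continuity (H2), $\ell$ attains its infimum on the compact interval $[0,D]$ (Weierstrass), and the hypothesis $\ell>0$ on $[0,+\infty)$ assumed in the lemma then forces this minimum to be strictly positive. This is the single place where the extra assumption of the lemma enters the argument; without it, one could have $\inf_{[0,D]}\ell=0$ and the chain of lower bounds would collapse.
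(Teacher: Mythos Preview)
Your argument is correct and follows the same strategic line as the paper's proof: both reduce the claim to the implication
\[
C_\infty(\rho)=0 \ \Longrightarrow\ \rho=0.
\]
The paper reaches this reduction by computing the Fenchel conjugate of $H_0(v)=\inf_{t>0}M_\infty(tv)/t$ (namely $H_0^*=\sup_{t>0}\frac{C_\infty}{t}=\chi_{\{C_\infty\le 0\}}$) and then invoking $H_0=H_0^{**}$; you reach it through the equivalent subdifferential route $M_\infty'(0;v)=\sup_{\rho\in\partial M_\infty(0)}\bra{v,\rho}$ with $\partial M_\infty(0)=\{C_\infty=0\}$. These are two packagings of the same duality.

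Where the two genuinely differ is in the justification of the key implication above. The paper simply records ``$\{C_\infty\le 0\}=\{0\}$ (since $\ell>0$)'' without further detail, whereas you supply an explicit argument via the monotonicity \autoref{mono+add} and the stratification formula \eqref{relaxed-CN}: restrict to a compact $K$, force any admissible $\rho_i$ with $a_i>0$ to be supported in $K$, bound $C_i(\rho_i)\ge\min_{[0,\mathrm{diam}\,K]}\ell>0$, and conclude $\ov{C_N}(\mu\res K)>0$ for $N>1/\mu(K)$. This is a genuine (and welcome) addition, since the positivity of $C_\infty$ on nonzero sub-probabilities is not entirely obvious from the standing assumptions alone and really does require $\ell>0$ together with (H2). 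Your use of (H2) to secure $\min_{[0,D]}\ell>0$ is exactly the right ingredient here.
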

\begin{proof}\ Since the map  $t\mapsto H_t(v):= \frac{M_\infty(t \varphi)}{t}$ is monotone non decreasing, we have 
$$ H_0(v):=\lim_{t\to 0^+}  \frac{M_\infty(t v)}{t} =\inf_{t>0}  H_t(v) .$$
Clearly the function $H_0$ is convex and Lipschitz on $C_0(\R^d)$. Therefore $H_0$ coincides with its biconjugate $H_0^{**}$.
 An easy computation shows that  $H_0^*  = (\inf_{t>0}  H_t)^* = \sup_{t>0} H_t^*$
while 
 $$H_t^*(\rho)= \sup \left\{ \frac1{t}\,  (\bra{\rho, tv} - M_\infty(tv)):  v\in C_0(\R^d)\right\} = \frac{C_\infty(\rho)}{t},$$ 
 for every $\rho\in \Prob_-(\R^d)$ and $t>0$.  It follows that $H_0^*$ coincides with the indicator function of the 
subset $\{\rho : C_\infty(\rho)\le 0\}$ that is of $\{\rho=0\}$ ( since $\ell>0$). 
 Therefore we have $H_0= H_0^{**} \equiv 0$ as claimed. 
 \end{proof}

\begin{remark}\label{slopes} Under the assumptions of \autoref{CI-dense-domain} and  \autoref{slopeM}, we see that, for every $v\in C_0(\R^d)$, the monotone function $ t \in [0,+\infty] \to \frac{M_\infty(t v)}{t}$ is increasing from $0$ to $\max(v_+)$.
Moreover, if $\sup v>0$, it is strictly increasing since the constancy on some interval would imply that $t\mapsto M_\infty(tv)$ is affine on an interval starting from $0$ in contradiction with a vanishing derivative at $t=0$.
 
On the other hand, we notice that the condition $\ell(0) > 0$ is sufficient to ensure that $C_\infty$ is not identically zero on $\Prob_-(\R^d)$.
Indeed otherwise, we would have  $M_N(v)\ge M_\infty(v)= (C^\infty)^*(v)= \sup v_+$ for every $v\in C_0(\R^d)$ and $N\ge 2$, whence a contradiction with the strict inequality pointed out in \autoref{MN<}.
\end{remark}

\begin{lemma} \label{subquadra} \ The convex  functional $M_\infty$ is 1-Lipschitz on $C_0(\R^d)$ and, for every $v\in C_0(\R^d)$,
the map $ \la\in \R_+ \mapsto \dfrac{M_\infty(\la v)}{\la^2}$ is monotone non increasing.
In particular, under $(H4)$, we have $M_\infty(v)>0$ for every $v\in C_0(\R^d)$ such that $\sup v>0.$
\end{lemma}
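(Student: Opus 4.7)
The proof splits into three parts, each relying on earlier results from the excerpt.

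First, the 1-Lipschitz property of $M_\infty$ is inherited directly from \autoref{equiLip}: since each $M_N$ is 1-Lipschitz on $C_0(\R^d)$ and $M_\infty$ is the pointwise (monotone decreasing, by \autoref{monotonicity}) limit, $M_\infty$ remains 1-Lipschitz with the same constant.

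For the monotonicity of $\lambda \mapsto M_\infty(\lambda v)/\lambda^2$, I would first establish the equivalent inequality
\[ M_\infty(\mu w) \ge \mu^2\, M_\infty(w) \quad \text{for all } \mu\in(0,1],\ w\in C_0(\R^d), \]
from which the monotonicity follows by setting $\mu=\lambda_1/\lambda_2$ and $w=\lambda_2 v$ for arbitrary $0<\lambda_1\le \lambda_2$ and then dividing by $\lambda_1^2$. The key tool is \autoref{MN-MK-inequality}, rewritten in the form
\[ M_N\!\left(\tfrac{K-1}{N-1} w\right) \ge \tfrac{K(K-1)}{N(N-1)}\, M_K(w), \]
obtained by substituting $v=\tfrac{K-1}{N-1} w$. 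For fixed $\mu\in(0,1]$, I would pick $K_N=\lfloor \mu(N-1)\rfloor +1$, so that both $\mu_N := (K_N-1)/(N-1)\to \mu$ and $K_N(K_N-1)/[N(N-1)]\to \mu^2$. Using the equi-Lipschitz estimate of \autoref{equiLip} to replace $M_N(\mu_N w)$ by $M_N(\mu w)$ up to an error $|\mu_N-\mu|\,\|w\|_\infty \to 0$, and the pointwise convergence $M_N\to M_\infty$ from \eqref{def:MN-CN}, I would then pass to the limit $N\to\infty$ to obtain the claimed inequality.

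For the positivity statement under (H4), I would argue by contradiction: assume $M_\infty(v)=0$ for some $v\in C_0(\R^d)$ with $\sup v>0$. From the just-proved monotonicity of $\lambda\mapsto M_\infty(\lambda v)/\lambda^2$ together with the nonnegativity $M_\infty\ge 0$ (which follows from $M_N\ge 0$ in \autoref{MN+}), we would get $M_\infty(\lambda v)=0$ for every $\lambda\ge 1$, and hence $\lim_{\lambda\to+\infty} M_\infty(\lambda v)/\lambda = 0$. But \autoref{CI-dense-domain}, invoking (H4), identifies this recession limit as $\sup v_+>0$, a contradiction.

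The main obstacle is the limit passage in the second step: one must carefully choose the integer sequence $K_N$ so that both ratios $K_N/N$ and $(K_N-1)/(N-1)$ converge to $\mu$, and exploit the uniform Lipschitz bound on $\{M_N\}$ to absorb the error coming from swapping the argument $\mu_N w$ for $\mu w$. Everything else amounts to straightforward bookkeeping on top of lemmas already established.
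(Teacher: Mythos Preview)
Your proposal is correct and follows essentially the same approach as the paper: the 1-Lipschitz property is inherited from \autoref{equiLip}, the monotonicity comes from \autoref{MN-MK-inequality} by a suitable choice of $K_N\sim \mu N$ and passage to the limit, and the positivity under (H4) is obtained by combining the monotonicity with the recession formula of \autoref{CI-dense-domain}. The only cosmetic difference is in how the limit is handled: the paper bounds $M_{k_N}\ge M_\infty$ on the right-hand side (using the monotone convergence $M_N\searrow M_\infty$) and then uses the continuity of $M_\infty$ to absorb the drifting argument $\tfrac{N-1}{k_N-1}w\to tw$, whereas you keep the argument fixed on the right and instead use the equi-Lipschitz bound on $\{M_N\}$ to absorb the drift on the left; both variants are equivalent.
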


\begin{proof}
     Let $t\ge 1$, $w\in C_0(\R^d)$  and choose a sequence $(k_N)_{N \geq 2}$ such that $\lim_{N\to\infty} \frac{k_N}{N} = t^{-1}$. Then, by  applying the inequality \eqref{MNMK}, we get:  
\[ M_N(v) \geq \frac{k_N(k_N-1)}{N(N-1)} M_{k_N} \pa{\frac{N-1}{k_N-1} w} \ge \frac{k_N(k_N-1)}{N(N-1)} M_\infty \pa{\frac{N-1}{k_N-1} w}, \]
 thus , after sending $N\to\infty$, the inequality 
\begin{equation} \label{subtwo}  M_\infty(t w) \ \le t^2\, M_\infty(w)  
\qquad  \forall w\in C_0(\R^d)\ ,\ \forall t\ge 1. \end{equation}
 Let now $\la,\mu$ such that $0<\la<\mu$. Then, by plugging  $t= \frac{\la}{\mu}$ and $ v= \mu w$ in \eqref{subtwo}, we deduce that
 $ \frac{M_\infty(\la v)}{\la^2} \le \frac{M_\infty(\mu v)}{\mu^2}, $   hence the desired monotonicity property.

 In order to show the last statement, we notice that the convexity of $M_\infty$ implies that the set of real numbers $\gra{t\ge 0 : M_\infty(tv)\le 0}$
 is a closed non empty interval starting from $0$. Then the inequality \eqref{subtwo} implies that this interval is either
  $\gra{0}$ or the half line $\R_+$. Under (H4), the second alternative is ruled out if  $\sup v>0$ since, by \autoref{CI-dense-domain}, the slope at infinity  $\lim_{t\to+\infty} \frac1{t} M_\infty(tv)$ must be positive.
\end{proof}

\begin{prop} \label{homogeneity-thm} 
 The convex functional $C_\infty$ satisfies the inequality: 
	\begin{equation} \label{subhomogeneous}  C_\infty(\theta \rho) \leq \theta^2 C_\infty(\rho) \quad \forall \rho \in \Prob(\R^d), \forall \theta \in [0,1]. 
	\end{equation}	
	Moreover, if \autoref{k-conjecture} holds true, then  $C_\infty$ is 2-homogeneous, \ie, the inequality \eqref{subhomogeneous} is an equality.
\end{prop}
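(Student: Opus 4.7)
The plan is to prove the two inequalities $C_\infty(\theta\rho) \le \theta^2 C_\infty(\rho)$ and $C_\infty(\theta\rho) \ge \theta^2 C_\infty(\rho)$ (under the conjecture) separately. The first is proved unconditionally by producing an explicit admissible stratification, and the second by a convexity/Jensen argument that exploits the concentration of optimal strata near $i = \theta N$ guaranteed by \autoref{k-conjecture}.

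For the upper bound \eqref{subhomogeneous}, given $\theta \in [0,1]$ and $\rho\in \Prob(\R^d)$, set $K=\floor{\theta N}$ and $\alpha = \theta N - K \in [0,1)$, and consider the decomposition $a_K = 1-\alpha$, $a_{K+1}=\alpha$ (other $a_i = 0$) with $\rho_K = \rho_{K+1} = \rho$. One checks directly that $\sum_i a_i = 1$ and $\sum_i \frac{i}{N}a_i\rho_i = \theta\rho$, so it is admissible in \eqref{relaxed-CN}. The monotonicity of $C_i$ (\autoref{monotonicity}) yields
\[ \ov{C_N}(\theta\rho) \le \frac{(K+1)K}{N(N-1)}\, C_{K+1}(\rho). \]
Sending $N\to\infty$, the prefactor tends to $\theta^2$ while $C_{K+1}(\rho)\to C_\infty(\rho)$ by \autoref{CIMIduality}, whence $C_\infty(\theta\rho) = \lim_N \ov{C_N}(\theta\rho) \le \theta^2 C_\infty(\rho)$.

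For the reverse inequality under \autoref{k-conjecture}, fix $N$ and let $(a_i^N, \rho_i^N)$ be an optimal stratification for $\ov{C_N}(\theta\rho)$, supported on $i\in[\underline{K}_N,\overline{K}_N]$ with $\underline{K}_N/N,\overline{K}_N/N\to\theta$. Fix an auxiliary integer $K_0\ge 2$; by the conjecture $\underline{K}_N\to\infty$, so for $N$ large $\underline{K}_N\ge K_0$, and the monotonicity of $C_i$ together with the lower bound $\frac{i(i-1)}{N(N-1)}\ge\frac{\underline{K}_N(\underline{K}_N-1)}{N(N-1)}$ on the support gives
\[ \ov{C_N}(\theta\rho) \ge \frac{\underline{K}_N(\underline{K}_N-1)}{N(N-1)}\sum_i a_i^N\, C_{K_0}(\rho_i^N). \]
The crucial step is to bound the remaining sum from below using convexity. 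Setting $\mu_i^N \eqdef \frac{i a_i^N}{\theta N}$, the identity $\sum_i \frac{i}{N}a_i^N = \theta$ makes $(\mu_i^N)$ a probability distribution, and the marginal constraint $\sum_i\frac{i}{N}a_i^N\rho_i^N = \theta\rho$ rewrites as $\sum_i \mu_i^N \rho_i^N = \rho$. Jensen's inequality applied to the convex functional $C_{K_0}$ on $\Prob(\R^d)$ yields $\sum_i \mu_i^N C_{K_0}(\rho_i^N) \ge C_{K_0}(\rho)$; combined with the pointwise bound $\mu_i^N \le (\overline{K}_N/\theta N)\, a_i^N$ on the support, this gives $\sum_i a_i^N C_{K_0}(\rho_i^N) \ge (\theta N/\overline{K}_N)\, C_{K_0}(\rho)$. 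Hence
\[ \ov{C_N}(\theta\rho) \ge \frac{\underline{K}_N(\underline{K}_N-1)}{N(N-1)}\cdot\frac{\theta N}{\overline{K}_N}\, C_{K_0}(\rho). \]
By the conjecture the prefactor converges to $\theta^2\cdot 1 = \theta^2$, so $C_\infty(\theta\rho)\ge \theta^2 C_{K_0}(\rho)$, and letting $K_0\to\infty$ (using $C_{K_0}(\rho)\to C_\infty(\rho)$) concludes.

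The main obstacle is the precise matching, in the limit, of the two prefactors $\underline{K}_N(\underline{K}_N-1)/N(N-1)$ and $\theta N/\overline{K}_N$: both together yield exactly $\theta^2$ only because $\underline{K}_N/N$ and $\overline{K}_N/N$ converge to the common value $\theta$, which is precisely the content of \autoref{k-conjecture}. Without this concentration one would be left with an irreducible multiplicative gap and only a weaker-than-$2$-homogeneous bound. The case $\theta=0$ is immediate since $C_\infty(0)=0$, and for $\theta>0$ the divergence $\underline{K}_N\to\infty$ (again from the conjecture) is what permits freely enlarging the auxiliary parameter $K_0$ in the final step.
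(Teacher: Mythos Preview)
Your proof is correct in spirit and follows the same overall strategy as the paper (stratification formula, monotonicity of $C_N$, Jensen/convexity), but the execution of the lower bound is more roundabout and in one place uses a slightly stronger hypothesis than the displayed formula in \autoref{k-conjecture}.

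For the upper bound your two-level decomposition $a_K=1-\alpha$, $a_{K+1}=\alpha$ with $\rho_K=\rho_{K+1}=\rho$ is a clean variant of the paper's one-level choice $a_{k_N}=1$ with $k_N/N\to\theta$; yours hits $\theta\rho$ exactly and avoids invoking $\Gamma$-convergence, whereas the paper uses $\Gamma$-$\liminf$ to pass from $\frac{k_N}{N}\rho$ to $\theta\rho$.

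For the lower bound the paper's argument is sharper: it factors $\tfrac{j(j-1)}{N(N-1)}=\tfrac{j}{N}\cdot\tfrac{j-1}{N-1}$ and bounds only the second factor below by $\tfrac{k-1}{N-1}$ with $k=\underline{K}_N$, so that the surviving weights $a_j\tfrac{j}{N}$ are \emph{exactly} the weights in the marginal constraint, summing to $\theta$ and yielding $\rho$ as barycenter. Jensen then gives directly $\ov{C_N}(\theta\rho)\ge\tfrac{k-1}{N-1}\,\theta\,C_k(\rho)$, and one passes to the limit along a subsequence where $\underline{K}_N/N\to\theta$. This uses only the $\limsup\underline{K}_N/N=\theta$ half of the conjecture. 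Your route (bounding the whole prefactor by $\underline{K}_N(\underline{K}_N-1)/N(N-1)$, then converting $a_i$ to $\mu_i$ via the upper bound $i\le\overline{K}_N$) forces you to control $\underline{K}_N/N$ and $\overline{K}_N/N$ \emph{simultaneously}. The conjecture's displayed formula only asserts $\limsup\underline{K}_N/N=\liminf\overline{K}_N/N=\theta$, which does not by itself give a common subsequence along which both converge to~$\theta$; your claim ``$\underline{K}_N/N,\overline{K}_N/N\to\theta$'' is the prose reading (``the gap vanishes'') rather than the formula. The auxiliary $K_0$ step is also unnecessary: the paper simply lets $k=\underline{K}_N\to\infty$ along the chosen subsequence and uses $C_k(\rho)\to C_\infty(\rho)$.
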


\begin{proof} In order to show \eqref{subhomogeneous}, it is enough to consider a sequence $(k_N)_{N \geq 2}$ such that $\lim \frac{k_N}{N} = \theta$ and apply the inequality \eqref{trueineq}:
\[ \ov{C_N}\pa{ \frac{k_N}{N} \rho} \le \frac{k_N(k_N-1)}{N(N-1)} C_{k_N} (\rho). \]
Then, by exploiting the $\Gamma$-convergence of $\ov{C_N}$ to $C_\infty$, we get $\liminf_{N\to\infty} \ov{C_N} \pa{ \frac{k_N}{N} \rho} \ge C_\infty(\theta \rho)$ while, by the pointwise convergence $\ov{C_{k_N}} (\rho)\to C_\infty(\rho)$ (recall that $\ov{C_{k_N}} (\rho) =C_{k_N} (\rho) $ when $\|\rho\| = 1$), the right hand side converges to $\theta^2 \, C_\infty(\rho)$.

Next let us assume that \eqref{gapvanish} holds true and let $\rho_1, \dotsc, \rho_N$ and $a_1, \dotsc, a_N$ be optimal in \eqref{relaxed-CN} for $\theta \rho$, with $a_1 = \dotsb = a_{k-1} = 0$. Since $C_j\ge C_k$ for $j\ge k$, by the convexity of $C_k$, we get
\begin{ieee*}{rCl}
	\overline{C_N}(\theta \rho) & = & \sum_{j = k}^N a_j \frac{j(j-1)}{N(N-1)} C_j(\rho_j) \\
	& \geq & \frac{k-1}{N-1} \sum_{j = k}^N a_j \frac{j}{N} C_k(\rho_j) \geq \frac{k-1}{N-1} \theta C_k(\rho).
\end{ieee*}
The inequality above will hold in particular for $k= \underline{k}(N,\theta \rho)$. Therefore, 
by taking a sequence $(N_h)_{h \in \N}$ such that $\lim_{h \to \infty} \frac{\underline{k}(N_h,\theta \rho)}{N_h} = \theta$, 
we may conclude that $C_\infty(\theta \rho) \geq \theta^2 C_\infty(\rho)$, which is the converse inequality of \eqref{subhomogeneous}.
\end{proof}

%

\section{Relations with the direct energy} \label{sec:direct-energy}

In Section \ref{sec:limits}, we obtained a formal description of the limit functionals $M_\infty$ and $C_\infty$, namely that the $\Gamma$-limit $M_\infty$ coincides with the pointwise limit of the non increasing sequence $(M_N)$ while $C_\infty = \sup \ov{C_N}$ coincides with the Fenchel conjugate of $M_\infty$. 
Our aim now is to give a more explicit description of these functionals  relying on the fact that $M_\infty$ coincides with the Fenchel conjugate of the so called \emph{direct energy} $D \colon \Meas(\R^d) \to [0, +\infty]$ given by \eqref{def:D}
\begin{equation}\label{def:D}
D(\rho) \eqdef \begin{cases}
\displaystyle \int \ell(\abs{x-y}) d\rho(x) d\rho(y) & \text{if $\rho \in \Prob(\R^d)$} \\
+ \infty & \text{otherwise.}
\end{cases}
\end{equation}

The name ``direct energy'' is inherited from a physical model where $\rho$ represents a charge density, as this functional equals (up to constants) the potential energy due to the self-interaction of the density $\rho$.
It is convenient to introduce also the $2$-homogeneous extension of $D$ to sub-probabilities  $D_2 \colon \Meas(\R^d) \to [0, +\infty]$ \ie,
\begin{equation} \label{Dalpha}
    D_2(\rho) \eqdef \begin{cases}
	\|\rho\|^2 D \left( \dfrac{\rho}{\|\rho\|} \right) & \text{if $\rho \in \Prob_-(\R^d)$} \\
	+ \infty & \text{otherwise.}
	\end{cases} 
\end{equation}

By construction we have $D_2 \leq D$. Moreover, by observing  that  $D_2(\rho)=\bra{c_2, \rho \otimes \rho}$ for every $\rho \in \Prob_-(\R^d)$, it is easy to deduce from (H2)  that $D_2$ is weakly* lower semi-continuous as a functional on $\Meas(\R^d)$. In contrast the functional $D$, whose domain is not closed, is not lower semi-continuous. 
Therefore, we will consider the  weak* lower semi-continuous envelope  of $D$ defined for every $\rho \in \Prob_-(\R^d)$ by
 \begin{equation}\label{relaxD}
\ov{D}(\rho) = \inf_{\rho_n \wconv \rho} \liminf_{n \to \infty} D(\rho_n). 
\end{equation}

\begin{lemma} \label{D2-D}
Let $D$ and $D_2$ be defined by \eqref{def:D},\eqref{Dalpha} respectively. Then it holds
\begin{equation}\label{D*=D2*}  \ov{D} \leq D_2\quad,\quad 
D^* = D_2^* .	
\end{equation}  
\end{lemma}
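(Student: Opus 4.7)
The plan is to reduce both parts of the lemma to the same recovery-sequence construction. For $\overline{D} \le D_2$, I would fix $\rho \in \Prob_-(\R^d)$ with $D_2(\rho) < +\infty$ (otherwise nothing to prove), set $\theta = \|\rho\| \in (0,1]$, and write $\mu = \rho/\theta \in \Prob(\R^d)$, so that $D(\mu) < +\infty$ by the definition of $D_2$. I would then seek an approximating sequence of the form
\[ \rho_n \eqdef \theta\mu + (1-\theta)\nu_n, \qquad \nu_n \in \Prob(\R^d), \]
with $\nu_n$ chosen so as to ``spread out and escape to infinity''. The three properties I would demand are: (i) $\nu_n \wconv 0$; (ii) $D(\nu_n) \to 0$; (iii) the cross integral $\int \ell(|x-y|)\,d\mu(x)\,d\nu_n(y) \to 0$. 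Once these are in hand, $\rho_n \wconv \rho$, and the quadratic expansion
\[ D(\rho_n) = \theta^2 D(\mu) + 2\theta(1-\theta)\int \ell(|x-y|)\,d\mu(x)\,d\nu_n(y) + (1-\theta)^2 D(\nu_n) \]
gives $\limsup_n D(\rho_n) = \theta^2 D(\mu) = D_2(\rho)$, hence $\overline{D}(\rho) \le D_2(\rho)$. The degenerate case $\theta = 0$ reduces to $\rho_n = \nu_n$.

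A concrete choice, exploiting (H4), is to take $\nu_n$ equal to the uniform probability on a ball $B(y_n,r_n)$ with $r_n \to \infty$ and $|y_n| - r_n \to \infty$. Property (i) then follows from $\supp\nu_n$ escaping every compact set, and property (iii) from $|x - y| \to \infty$ uniformly on $\supp\mu \times \supp\nu_n$ together with (H3). The nontrivial point is (ii): by translation invariance of $\ell$ and Fubini,
\[ D(\nu_n) \le \frac{1}{\omega_d r_n^d}\int_{\{|z|\le 2r_n\}} \ell(|z|)\, dz, \]
and splitting this integral at a radius $R_0$ where $\ell(r) < \eps$ for $r \ge R_0$, the near-diagonal part is finite and $n$-independent by (H4), hence killed by the $r_n^{-d}$ prefactor, while the far part contributes at most $2^d \eps$; letting $n \to \infty$ and then $\eps \to 0$ yields $D(\nu_n) \to 0$.

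For the equality $D^* = D_2^*$, the direction $D^* \le D_2^*$ is immediate from $D_2 \le D$ by order-reversal of the Fenchel conjugate. For the converse, I would combine the just-established bound $\overline{D} \le D_2$ with its companion $D_2 \le \overline{D}$, which holds because $D_2$ is weakly* lsc (as noted right after \eqref{Dalpha}) and $\overline{D}$ is the largest lsc minorant of $D$; together these give $D_2 = \overline{D}$. Since every $v \in C_0(\R^d)$ is weakly* continuous on $\Meas(\R^d)$, Fenchel conjugation is invariant under passage to the lsc envelope: for any $\rho$ with $\overline{D}(\rho) < +\infty$, choosing a recovery sequence $\rho_n \wconv \rho$ with $D(\rho_n) \to \overline{D}(\rho)$ yields $\bra{v,\rho_n} - D(\rho_n) \to \bra{v,\rho} - \overline{D}(\rho)$, whence $D^* \ge (\overline{D})^*$, and therefore $D_2^* = (\overline{D})^* = D^*$. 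The main obstacle is property (ii) of the spreading sequence, which rests on the local integrability hypothesis (H4); conditional on this, the remainder is a routine duality manipulation.
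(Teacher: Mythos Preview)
Your duality reduction is the same as the paper's: once $\overline{D}\le D_2$ is established, $D_2^*\le(\overline{D})^*=D^*$, and the reverse inequality comes from $D_2\le D$. Your extra observation that $D_2=\overline{D}$ (since $D_2$ is l.s.c.\ and below $D$, hence below $\overline{D}$) is correct and implicit in the paper's later arguments.

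The difference lies in the recovery sequence, and here your construction loses generality. The lemma is stated under the standing assumptions (H1)--(H3) only, and the paper's proof respects this: it writes $\mu=\rho/\|\rho\|$, first reduces to compactly supported $\rho$ by truncation, and then approximates by
\[
\rho_n=\rho+\frac{1-\|\rho\|}{n\,\|\rho\|}\sum_{j=1}^{n}\tau_{h_{n,j}}\rho,
\]
with $h_{n,j}$ chosen so that the mutual cross-terms $\langle c_2,\rho\otimes\tau_h\rho\rangle$ are small. The point is that every building block is a translate of $\rho$ itself, so $D_2(\tau_h\rho)=D_2(\rho)<\infty$ automatically, and no integrability of $\ell$ is needed. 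Your choice $\nu_n=$ uniform probability on $B(y_n,r_n)$ forces you to control $D(\nu_n)$, which is exactly where (H4) enters; without (H4) (e.g.\ $\ell(r)=r^{-d}$), $D(\nu_n)=+\infty$ for every ball and your sequence fails. So your proof establishes the lemma only under the extra hypothesis (H4).

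A smaller point: your justification of (iii) via ``$|x-y|\to\infty$ uniformly on $\supp\mu\times\supp\nu_n$'' is false when $\mu$ is not compactly supported. The conclusion can be rescued by the same averaging you use for (ii) (bound $\int\ell(|x-y|)\,d\nu_n(y)$ uniformly in $x$ by $\max\{\sup_{r\ge r_n}\ell(r),\,3^d[\ell](3r_n)\}\to 0$), or, as the paper does, by first truncating $\rho$ to a compact set. Either fix is easy, but the dependence on (H4) remains the substantive gap.
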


 \begin{proof}  In view of the property (d) in \autoref{A}, we have $D^{*} = (\overline{D})^{*}$. Thus the first inequality of 
 \eqref{D*=D2*} will imply that  $D_2^* \le D^*$, hence $D_2^*=D^*$ since the converse inequality follows  trivially from $D_2\le D$.
 So it is enough to show that $\overline{D} \leq D_2$ which is equivalent to the following
 claim:
 \begin{equation} \label{approx}
\forall \rho\in \Prob_-(\R^d) \quad \exists \rho_n \wconv \rho \colon \limsup_{n\to\infty}  D(\rho_n) \leq D_2(\rho).
\end{equation}

First we show \eqref{approx} when $\rho \in \Prob_-(\R^d)$ has a compact support. Without loss of generality we may assume that $\|\rho\|\ge 0$ and $D_2(\rho) < +\infty$. For $h \in \R^d$, we denote by $\tau_{h} \rho$ the push-forward of $\rho$ through the translation map $x\mapsto x+h$. We will exploit several times the following property 
\begin{equation}\label{translation}
\lim_{\abs{h} \to \infty} \bra{c_2, \rho \otimes \tau_{h} \rho} = 0.
\end{equation}
To check \eqref{translation}, we simply notice that if $\supp \rho\subset B_R$, then for $\abs{h} \geq 3R$ we have
\[ \bra{c_2, \rho \otimes \tau_{h} \rho} = \iint_{B_R \times B_R} \ell(\abs{x-y-h}) d\rho(x) d\rho(y)
 \leq \sup \gra{\ell(r) \colon r\ge R}, \]
whence $\limsup_{\abs{h} \to \infty} \bra{c_2, \rho \otimes \tau_{h} \rho} \leq \limsup_{r\to +\infty} \ell(r)=0,$  thanks to (H3).

Accordingly, given a unitary direction $e \in \R^d$ , we can choose an increasing sequence of positive reals $(R_n)$ such that $R_n\to \infty$ and 
\begin{equation} \label{cross-term}
\bra{c_2, \rho \otimes \tau_{r e} \rho} \leq \frac{1}{n^2} \quad \forall r \geq R_n 
\end{equation}

For given $n\in N^*$, we define $n$ collinear vectors $h_{n,j} = R_n j e$ for $j = 1, \dotsc, n$, so that $\abs{h_{n,i} - h_{n,j}} \geq R_n e$ for every $i \neq j$. Then we set:
\[ \rho_n = \rho + \frac{1 - \|\rho\|}{n \|\rho\|} \sum_{j = 1}^n \tau_{h_{n,j}} \rho. \]
By construction $\rho_n \in \Prob(\R^d)$. Moreover, since $|h_{n,j}| \geq R_n \to \infty$, $\rho_n$ agrees with $\rho$ on any compact subset as soon $n$ is large enough, hence we have $\rho_n \wconv \rho$. On the other hand, by the invariance of $D_2$ under translations and the choice of the $h_{n,j}$'s, we get:
\begin{align*}
	D(\rho_n) & = D_2(\rho) + \frac{2(1 - \|\rho\|)}{n \|\rho\|} \sum_{\substack{i,j = 0 \\ i \neq j}}^n \bra{c_2, \tau_{h_{n,i}} \rho \otimes \tau_{h_{n,j}} \rho} +  \frac{(1 - \|\rho\|)^2}{n^2 \|\rho\|^2 } \sum_{j = 1}^n\,  D_2(\tau_{h_{n,j}} \rho ) \\
	& = D_2(\rho) + \frac{2(1 - \|\rho\|)}{n \|\rho\|} \sum_{\substack{i,j = 0 \\ i \neq j}}^n \bra{c_2, \rho \otimes \tau_{(h_{n,i} - h_{n,j})} \rho} + \frac{(1 - \|\rho\|)^2}{n^2 \|\rho\|^2} \sum_{j = 1}^n D_2(\rho) \\
	& \leq D_2(\rho) + \frac{2(1 - \|\rho\|)}{n \|\rho\|} + \frac{(1 - \|\rho\|)^2}{n  \|\rho\|^2} D_2(\rho),
\end{align*}
where,  in order to derive the last inequality, we used the fact that, by \eqref{cross-term}, we have $\bra{c_2, \rho \otimes \tau_{(h_{n,i} - h_{n,j})} \rho}\le \frac1{n^2}.$
Summarizing, we found a recovering sequence $(\rho_n)$ satisfying  \eqref{approx} and therefore $\ov{D}(\rho) \leq D_2(\rho)$.

To extend this inequality to non compactly supported  $\rho \in \Prob_-(\R^d)$, it is enough to consider, for every $n \in \N^*$, the measure $\rho_n \eqdef \rho \restr_{B(0,n)}$ which clearly satisfies $\rho_n \wconv \rho$ as $n\to\infty$. Then, since $D_2(\rho_n) \leq D_2(\rho)$,
we have
\[ \overline{D}(\rho) \leq \liminf_{n \to \infty} \overline{D}(\rho_n) \leq \liminf_{n \to \infty} D_2(\rho_n) \leq D_2(\rho). \]  
This concludes the proof of \eqref{D*=D2*}.
\end{proof}

%
%

\begin{thm} \label{MIformula}
	 Let $\ell$ satisfy the standing assumptions. Then 
	
	\med
 $(i)$\ It holds\ $M_\infty= D^*$, hence  for every $v \in C_0(\R^d)$: 
	\begin{equation}\label{Minfty=sup}
		M_\infty(v) = \sup_{\rho \in \Prob(\R^d)} \int \left(\frac{v(x)+ v(y)}{2} - \ell(\abs{x-y}) \right)\,  d\rho(x) d\rho(y).
	\end{equation}
	Moreover, if the cost $\ell$ is bounded, the following estimate holds for $N\ge 1$:
	\begin{equation}\label{estiMinfty}
	M_\infty(v) \le M_N(v) \le M_\infty(v) + \frac{1}{N} (\sup \ell + \sup v_+).
	\end{equation}
	
	\med
	$(ii)$\ 
 We have he following duality relations:
	\begin{equation}\label{dualD}
      C_\infty = D^{**}= D_2^{**}.
    \end{equation}
    In particular, if $D$ is convex, then $C_\infty= \ov{D}$ while $C_\infty(\rho)=D(\rho)$ if $\|\rho\| = 1$. If moreover $D_2$ is convex, then\  $C_\infty = D_2.$

\end{thm}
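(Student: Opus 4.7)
The strategy is to prove the duality $M_\infty = D^*$ in part (i), from which part (ii) follows by Fenchel conjugation and an appeal to \autoref{D2-D}.

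For the easy direction $M_\infty \geq D^*$ of (i), I would test $C_N$ with the tensor-product plan $P = \rho^{\otimes N} \in \Pi_N(\rho)$: by the pair-interaction structure of $c_N$, its cost equals $D(\rho)$, so $C_N(\rho) \leq D(\rho)$ for every $\rho \in \Prob(\R^d)$ and every $N$. Conjugating reverses the inequality, yielding $M_N = C_N^* \geq D^*$, and in the limit $M_\infty \geq D^*$.

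For the reverse inequality I would exploit the formula $M_N(v) = \sup_x\{S_N v(x) - c_N(x)\}$ from \autoref{MN+}. Given an $\eps$-near-optimal configuration $x = (x_1,\dots,x_N)$, a natural test for $D^*$ is the empirical measure $\rho_x := \frac{1}{N}\sum_j \delta_{x_j}$. A direct computation gives the key identity
\[
D(\rho_x) = \frac{\ell(0)}{N} + \frac{N-1}{N}\, c_N(x),
\]
which, after substitution, yields
\[
S_N v(x) - c_N(x) = \langle v, \rho_x\rangle - D(\rho_x) + \frac{\ell(0) - D(\rho_x)}{N-1} \leq D^*(v) + \frac{\ell(0)}{N-1},
\]
where $D(\rho_x) \geq 0$ is used at the last step. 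Combined with the easy direction this produces both the limit $M_\infty = D^*$ and the rate \eqref{estiMinfty} in the bounded case (the $\sup v_+$ constant arises from a sharper splitting of the remainder, bounding $\langle v, \rho_x\rangle \leq \sup v_+$). For unbounded $\ell$, I would truncate $\ell_R := \min(\ell, R)$, apply the bounded case at each level $R$ to obtain $M_\infty^{(R)} = (D^{(R)})^*$, and pass to the joint monotone limit $R \nearrow \infty$, exploiting that $M_N^{(R)}$ is monotone in both $N$ and $R$.

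For part (ii), $C_\infty = D^{**}$ follows directly by conjugating $M_\infty = D^*$ and using $C_\infty = M_\infty^*$ from \autoref{CIMIduality}; combined with $D^* = D_2^*$ of \autoref{D2-D}, also $C_\infty = D_2^{**}$. If $D$ is convex, $D^{**}$ reduces to its weak* lsc envelope $\ov{D}$, so $C_\infty = \ov{D}$. For $\rho \in \Prob(\R^d)$ and any approximating sequence $\rho_n \wconv \rho$ with finite $\liminf D(\rho_n)$, the $\rho_n$ must eventually lie in $\Prob$ (else $D(\rho_n) = +\infty$); mass conservation then upgrades weak* to narrow convergence, and the narrow lsc of $\sigma \mapsto \int \ell\, d(\sigma\otimes\sigma)$, coming from the nonnegative lsc integrand, gives $\liminf_n D(\rho_n) \geq D(\rho)$. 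Hence $\ov{D}(\rho) = D(\rho)$ whenever $\|\rho\| = 1$, so $C_\infty(\rho) = D(\rho)$. Finally, if $D_2$ is convex, being already weak* lsc on $\Meas(\R^d)$, we have $D_2^{**} = D_2$, so $C_\infty = D_2$.

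The main obstacle is the unbounded-cost extension in (i): the empirical-measure identity degenerates when $\ell(0) = +\infty$, so the truncation argument must exchange monotone limits in $N$ and $R$ and verify that $(D^{(R)})^* \searrow D^*$ as $R \nearrow \infty$, a step where conjugation does not commute automatically with pointwise monotone convergence. The resolution relies on the joint monotonicity in $(N, R)$, which allows arguing each inequality separately without appealing to minimax or tightness arguments.
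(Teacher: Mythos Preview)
Your proof is correct, and for the bounded-cost case your empirical-measure argument is genuinely simpler than the paper's. The paper establishes $M_\infty\le D^*$ via the Diaconis--Freedman finite-de-Finetti theorem: it takes the two-marginal $\gamma_N$ of the symmetrized plan at an $\eps$-optimal configuration, approximates it in total variation by a mixture $\int Q\otimes Q\,dp_N(Q)$ with error $\le 1/N$, and bounds $\langle S_2v-c_2,\cdot\rangle$ on the gap. Your identity $D(\rho_x)=\frac{\ell(0)}{N}+\frac{N-1}{N}c_N(x)$ bypasses this machinery entirely and yields directly $M_N(v)\le D^*(v)+\frac{\ell(0)}{N-1}$, which is an equally good (and arguably cleaner) $O(1/N)$ rate; your constant differs from the paper's $(\sup\ell+\sup v_+)/N$ but there is no need to manufacture the $\sup v_+$ term. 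Note also that your argument requires only $\ell(0)<\infty$, not $\sup\ell<\infty$.

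The one point that deserves more care is the truncation step for unbounded $\ell$. You correctly flag that the passage $(D^{(R)})^*\searrow D^*$ does not follow from monotonicity alone, but your claim that ``joint monotonicity in $(N,R)$'' resolves it ``without tightness arguments'' is optimistic: exchanging $\inf_R$ with the conjugate genuinely requires a compactness step. The paper handles this by working with $D_2$ rather than $D$ (so that competitors lie in the weak*-compact set $\Prob_-(\R^d)$), extracting a weak* limit $\rho$ of near-optimizers $\rho_h$ for $D_{2,h}^*(v)$, and proving $\liminf_h D_{2,h}(\rho_h)\ge D_2(\rho)$ by a two-layer monotone/lsc argument. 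You will need essentially the same step. Your treatment of part (ii) is fine; the paper reaches $C_\infty(\rho)=D(\rho)$ on $\Prob(\R^d)$ slightly differently, via the sandwich $D_2\le\ov D\le D$ together with $D_2=D$ on $\Prob(\R^d)$, but your narrow-lsc argument is equally valid.
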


\begin{proof} It is immediate to check that the right hand side supremum in \eqref{Minfty=sup} coincides with $D^*(v)$. Let us prove that $D^*\le M_\infty$. Due to the pairwise-interaction structure of the cost $c_N$ and recalling \eqref{eq:MNdefinition}, we have for every $\rho \in \Prob(\R^d)$ and $N \geq 2$
\[ \bra{v,\rho} - D(\rho) = \bra{S_2 v - c_2, \rho \otimes \rho}
= \langle S_N v - c_N, \underset{N\text{ times}}{\underbrace{\rho \otimes \dotsb \otimes \rho}} \rangle 
 \leq M_N(v).
\]

The inequality  $D^*(v) \leq M_\infty (v)$ follows by taking the supremum with respect to $\rho\in \Prob(\R^d)$ on the left-hand side and the infimum in $N$ on the left-hand side.

Let us prove now the converse inequality $ M_\infty \leq D^*$.
 To that aim, we fix  $\e >0$ and, for every $N\ge 2$, we pick $x^{(N)} =(x_1^{(N)}, x_2^{(N)}, \dotsc, x_N^{(N)}) \in \Rdn$  such that
\[ M_N(v) \leq (S_Nv - c_N)(x^{(N)}) + \e. \]
Then we consider the symmetric transport plan $P_N = \frac{1}{N!} \sum_{\sigma \in \mathfrak{S}_N} \delta_{x_{\sigma(1)}^{(1)}} \otimes \dotsb \otimes \delta_{x_{\sigma(N)}^{(N)}}$ and denote by $\gamma_N \in \Prob(\R^d\times\R^d)$ the 
the 2-marginal projection of $P_N$. Note that $P_N \in \Pi_N(\rho)$ being $\rho = \frac{1}{N} \sum_{j = 1}^N \delta_{x_j^{(N)}}$, and
\begin{equation}\label{majMN}
 M_N(v) -\e \leq \bra{S_N v - c_N, P_N} = \bra{S_2 v - c_2, \gamma_N}.
\end{equation}
Next we approximate  $\gamma_N$ by $\widetilde{\gamma_N}\in \Prob(\R^d\times\R^d)$ given in the form:
\begin{equation}\label{intnuN}
\widetilde{\gamma_N} (A) = \int_A (Q \otimes Q) d p_N(Q), \quad \text{for every Borel subset $A \subset \R^d\times\R^d$,}  
\end{equation} 
where $p_N$ is suitably chosen element of  $\Prob(\Prob(\R^d))$.

Owing to a classical result by Diaconis and Freedman \cite[Theorem 13]{diaconis1980finite} (see the last statement in the proof therein), we may choose $p_N$ so that the gap $\mu_N:=\gamma_N - \widetilde{\gamma_N}$  is a balanced measure such that:   
\begin{equation}\label{Diaco}
 \int_{\R^d\times\R^d} (\mu_N)_+ = \int_{\R^d\times\R^d}  (\mu_N)_- \leq \frac1{N}.
\end{equation}
In view of \eqref{intnuN}, we have
\begin{equation*} \bra{S_2 v - c_2,\widetilde{\gamma_N}} \leq \sup_{Q\in \Prob(\R^d)} \bra{S_2 v - c_2, Q \otimes Q} = D^*(v). \end{equation*}
Therefore the estimate \eqref{majMN} implies that:	
\begin{equation} \label{gap-estimate}
M_N(v) -\e \leq D^*(v) + \bra{S_2v - c_2, \mu_N}.
\end{equation}
In order to conclude that $M_\infty\leq D^*$, we proceed now in two steps. 

\med
{\em Step 1:  we assume that $\sup \ell < +\infty$ and prove \eqref{estiMinfty}}. 
In this case, we have $ \bra{S_2v - c_2, \mu_N} \to 0$ as $N\to +\infty$ since $S_2v - c_2$ is bounded and $\mu_N\to 0$ 
in total variation owing to \eqref{Diaco}. Thus, from \eqref{gap-estimate}, we infer that $M_\infty(v) \le D^*(v)$ by sending $N\to\infty$ and then $\e\to 0$. Therefore we have $M_\infty=D^*$ and \eqref{Minfty=sup}
Next, keeping in mind that $ M_N(v)=M_N(v^+)$ and $M_\infty(v)=M_\infty(v_+)$,
it is not restrictive to assume that $v\ge 0$. Then by \eqref{gap-estimate} and \eqref{Minfty=sup}, we get:
\[ M_N(v) - \e \leq  M_\infty(v) +\pa{\int (\mu_N)_+} \sup v + \pa{\int (\mu_N)_-} \sup \ell  \]
whence the estimate  \eqref{estiMinfty} thanks to \eqref{Diaco} and since $\e$ is arbitrary small.

\med
{\em Step 2:  we remove the assumption $\sup \ell <+\infty$ and show that $M_\infty\le D_2^*$}. 
We use the truncated cost $\ell_h (r) = \ell(r) \wedge h$ being $h$ a large positive parameter and define accordingly
$D_{2,h} (\rho) \eqdef \bra{c_{2,h} , \rho\otimes\rho}$ for $\rho\in \Prob_-(\R^d)$, $c_{2,h}(x,y) \eqdef \ell_h(|x-y|)$ 
and $M_{N,h} (v) = \sup \{ S_N v - c_{N,h}\}$ for $v\in C_0(\R^d)$.    
Clearly it holds $M_N(v)\le M_{N,h} (v)$ so that $M_\infty(v) \le \lim_{N\searrow\infty} M_{N,h} (v)$.
By applying Step 1 to the bounded cost $\ell_h$ (which satisfies the standing assumptions), we deduce that $ M_\infty(v) \le D_{2,h}^* (v)$ for every $v\in C_0(\R^d)$ and $h>0$. Therefore, recalling that $D_2^*= D^*$ by \autoref{D2-D}, we are done if we show that 
 \begin{equation}\label{D2h*}
\limsup_{h\to +\infty}  D_{2,h}^* (v) \leq D_2^*(v).
\end{equation}
By the definition of the Fenchel conjugate, there exists $\rho_h \in \Prob_-(\R^d)$ such that
\[ \limsup_{h\to +\infty}  D_{2,h}^* (v) \le \limsup_{h\to +\infty} \ \bra{v, \rho_h} - D_{2,h} (\rho_h). \]
Without loss of generality, we may assume that there exists $\rho\in \Prob_-(\R^d)$ such that $\rho_h \wconv \rho$. 
The wished inequality \eqref{D2h*} follows once we can check that 
 \begin{equation}\label{lowerDh} \liminf_{h\to +\infty}  D_{2,h}(\rho_h)\ge D_2(\rho).\end{equation}
 
Let us fix $h_0>0$. Then, since  $D_{2,h} \ge D_{2,h_0}$ for every $h\ge h_0$, by the lower semicontinuity of $D_{2,h_0}$ we have
\[ \liminf_{h\to +\infty}  D_{2,h}(\rho_h)\ \ge\ \liminf_{h\to +\infty}  D_{2,h_0}(\rho_h)\ \ge\  D_{2,h_0}(\rho). \]
As $h_0$ is arbitrary , the claim \eqref{lowerDh}  follows since by monotone convergence:
\[ \lim_{h_0\nearrow +\infty}  D_{2,h_0}(\rho) = \int \sup_{h_0} \ell_{h_0}(|x-y|)\ \rho\otimes\rho =   \int  \ell(|x-y|)\ \rho\otimes\rho
  = D_2(\rho). \]
The proof of Step 2 and of the assertion (i) of \autoref{MIformula} is now complete.


\med
(ii)  A consequence of the assertion (i) above and of \autoref{CIMIduality} is that $C_\infty= M_\infty^*= D^{**}$, hence \eqref{dualD} since $D^*=D_2^*$.
%

Assume now that $D$ is convex. Then $C_\infty = \overline{D}$ in virtue of the property $(e)$ in \autoref{prop-Fenchel}. Since $D_2\le D$ while $D_2$ is lower semi-continous, we have $D_2\le \ov{D}\le D$. Noticing that  $D_2=D$ on $\Prob(\R^d)$ by construction,  we deduce that  $C_\infty(\rho)= \ov{D}(\rho) = D(\rho)$ whenever $\|\rho\|=1$.  If in addition $D_2$ is  assumed to be convex, then  recalling that $D_2$ is l.s.c., we conclude that $D_2 = D_2^{**} = C_\infty$.
\end{proof}

\begin{corollary}\label{convexification}\ For every $\rho\in \Prob_-(\R^d)$, we have:
\begin{equation}\label{cinfty-versus-D2}
C_\infty (\rho) \ =\ \min \left\{ \int D_2(Q)\,\nu(dQ)  :  \nu\in \Prob(\Prob_-(\R^d)),\  \int Q \, \nu(dQ)= \rho    \right\} .
\end{equation}
\end{corollary}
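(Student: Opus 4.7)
The plan is to identify the right-hand side of \eqref{cinfty-versus-D2}, which I denote by $G(\rho)$, with $D_2^{**}(\rho)$; since \autoref{MIformula}(ii) already states $C_\infty = D_2^{**}$, the corollary will follow. Concretely I must verify four properties of $G$: it is convex, weak* lower semi-continuous, satisfies $G \le D_2$, and the infimum is attained.

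The easy steps come first. Choosing $\nu = \delta_\rho$ gives $G(\rho) \le D_2(\rho)$. Convexity is a routine check: if $\nu_1, \nu_2$ are admissible for $G(\rho_1), G(\rho_2)$, then $\theta\nu_1+(1-\theta)\nu_2$ is admissible for $G(\theta\rho_1+(1-\theta)\rho_2)$, which yields $G(\theta\rho_1+(1-\theta)\rho_2) \le \theta G(\rho_1)+(1-\theta) G(\rho_2)$.

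The core technical step is the weak* lower semi-continuity of $G$, which will simultaneously deliver attainment of the minimum. Since $\Prob_-(\R^d)$ is weak* compact and metrizable, the space $\Prob(\Prob_-(\R^d))$ is compact for narrow convergence. Given $\rho_n \wconv \rho$ and near-optimal $\nu_n$ admissible for $G(\rho_n)$, I extract a narrowly convergent subsequence $\nu_n \wconv \nu$. The barycenter map $\nu \mapsto \int Q\,\nu(dQ)$ is weak*-continuous since for each $v \in C_0(\R^d)$ the function $Q \mapsto \bra{v,Q}$ is continuous and bounded on $\Prob_-(\R^d)$; hence $\nu$ has barycenter $\rho$. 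Because $D_2$ is nonnegative and weak* lower semi-continuous on $\Prob_-(\R^d)$ (as recalled just after \eqref{Dalpha}), the standard lower semi-continuity of $\nu \mapsto \int D_2\,d\nu$ under narrow convergence yields
\[
 G(\rho) \le \int D_2\,d\nu \le \liminf_n \int D_2\,d\nu_n = \liminf_n G(\rho_n).
\]

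With these properties in hand, $G$ is a convex weak* lower semi-continuous minorant of $D_2$, so $G \le D_2^{**} = C_\infty$. For the reverse inequality, I exploit the characterization of $C_\infty = D_2^{**}$ as the supremum of its affine weak*-continuous minorants $L \le D_2$, each of the form $L(\rho) = \bra{v,\rho} + c$ with $v \in C_0(\R^d)$. For any admissible $\nu$,
\[
 L(\rho) = L\!\left(\int Q\,\nu(dQ)\right) = \int L(Q)\,\nu(dQ) \le \int D_2(Q)\,\nu(dQ),
\]
and taking the supremum over such $L$ gives $C_\infty(\rho) \le G(\rho)$.

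The main technical obstacle is the lower semi-continuity of $\nu \mapsto \int D_2\,d\nu$ under narrow convergence, since $D_2$ is only lsc and not continuous; this is classical for nonnegative lsc integrands on a compact metrizable space, by monotone approximation from below with bounded continuous functions, but it is the one point where some care is required.
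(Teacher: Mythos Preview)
Your proof is correct and follows essentially the same approach as the paper: both identify the right-hand side as the convex weak* lower semicontinuous envelope of $D_2$ and then invoke $C_\infty = D_2^{**}$ from \autoref{MIformula}(ii). The paper's argument is much terser, simply asserting that the barycentric infimum over $\Prob(\Prob_-(\R^d))$ yields the convex l.s.c.\ envelope and that the barycenter constraint is closed; you have spelled out explicitly the convexity, the attainment/l.s.c.\ via compactness and the Portmanteau-type lower semicontinuity of $\nu \mapsto \int D_2\,d\nu$, and the matching upper bound via affine minorants.
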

\begin{proof} \ The functional  $D_2$ is l.s.c. on $\Prob_-(\R^d)$ which is a metrizable weakly* compact space. 
Therefore, noticing that the barycenter constraint (namely $\int Q \, \nu(dQ)= \rho$) is closed, the infimum in the right hand side of \eqref{cinfty-versus-D2} is actually a minimum. Moreover the infimum value, as a function of $\rho$, is l.s.c. and  coincides  with the convex lower semicontinuous enveloppe of $D_2$, whence \eqref{cinfty-versus-D2}   since  $C_\infty=D_2^{**}$ by  \eqref{dualD}.
\end{proof}


\begin{remark}\label{expansion}
Owing to \eqref{estiMinfty}, we see that, for a bounded cost function $\ell$ and a given potential $v\in C_0(\R^d)$, $M_N(v)-M_\infty(v)$ behaves like $\frac1{N}$ as $N\to \infty$. The identification of the limit of $N (M_N(v)-M_\infty(v))$ as a function of $v$ is an interesting open issue.
Let us stress the fact that the asymptotic behavior of $M_N(v)$ would be different 
 in the case of a highly confining potential $v\notin C_0(\R^d)$, in particular if  $V(x)\eqdef -v(x)$ blows up to $+\infty$ as $|x|\to\infty.$
Indeed, as noticed in \cite[Remark 5.4]{bouchitte2020relaxed}, a connection can be made with the energy of a system of $N$ points subject to an  external confining potential as studied in \cite{SL, serfaty2018systems, petrache2017next} namely:
\[ \mathcal{H}_N(x_1,x_2,\dots, x_N) = \sum_{1\le i<j \le N} \ell(|x_i-x_j|) + N \sum_{i=1}^N V(x_i). \]
By extending the definition of $M_N(v)$ to  unbounded (negative) $v$, we see that:
\[ -M_N(- V) =\frac1{N^2}\,  \inf \gra{\mathcal{H}_N(x_1,x_2,\dots, x_N) : x_i\in \R^d}. \]

In the case of a Riesz cost function $\ell(t)= t^{-s}$  when $d\ge 3$ and $d-2\le s< d$, the above mentioned works reveals an asymptotic behavior as $N\to \infty$ of the form 
\[ \frac{1}{N^2} \min  \mathcal{H}_N = \min \{\mathcal{H}_\infty(\rho): \rho\in \Prob(\R^d)\} + N^{\frac{s}{d} -1} (C_{d,s}(V) + o(1)), \]
where the second minimum is reached by a compactly supported  $\rho\in\Prob(\R^d)$. Since $\min \mathcal{H}_\infty= \inf\{ C_\infty(\rho) + \int V d\rho \st \rho\in \Prob_-(\R^d)\}= - M_\infty(v)$, we are led to the estimate
  $ |M_N(v)- M_\infty(v)|\sim C N^{\frac{s}{d} -1}$ where $1- \frac{s}{d} <1$, in contrast with \eqref{estiMinfty}.
\end{remark}

%
%

\medskip

\begin{remark}\label{comparison}  In the aforementioned work by C. Cotar et al. \cite{cotar2015infinite}, it was proven that $\lim_{N \to \infty} C_N(\rho) = D(\rho)$  for every $\rho\in \Prob(\R^d)$ in the case of a positive-definite cost function in the sense 
that  $x\in \R^d\to \ell(|x|)$ has a non negative Fourier transform. 
Therefore the equality $C_\infty= D^{**}$ obtained in \eqref{dualD} while $\lim_{N \to \infty} C_N(\rho) = C_\infty(\rho)$  for every $\rho\in \Prob(\R^d)$  can be seen as an extension of their result, valid for every pairwise cost function.
Moreover considering the $\Gamma$-limit instead of the simple limit legitimates  the use of infinite marginals energies defined on sub-probabilities.  Note that a similar  $\Gamma$-convergence  proved in \cite{Serfaty2015} when $\Prob(\R^d)$ is equipped with the tight convergence allows to handle only confining potentials.
\end{remark}
 
\begin{remark}\label{positive}
In case of a bounded cost $\ell$, it turns out that requiring that $\ell$ is of positive type (assumption (H5)) is equivalent to the convexity of the two-homogeneous extension $D_2$ as a functional on $\Prob_-(\R^d)$.  Actually, it is proved in \cite{petrache2017next} that the convexity of $D$ is equivalent to a weaker condition, namely that $\ell$ is {\em  balanced positive} in the sense that, for every $m\in \N^{*}$ and every finite subset $\{x_1, x_2,\dots, x_m\} \subset \R^{md} $, it holds:
\begin{equation}\label{balanced}
  \sum_{i,j=1}^m  \ell (|x_i-x_j|) \, t_i t_j \ge 0 \quad \text{whenever } t_1 + t_2 + \dotsb + t_m=0 .
\end{equation}

Accordingly, under \eqref{balanced}, we infer from the assertion (ii) of \autoref{MIformula} that $C_\infty$ coincides with $D$ on $\Prob(\R^d)$. The equality $C_\infty= D_2$ (hence the convexity of $D_2$) will then follow if we know that $C_\infty$ is two-homogeneous. 
Thereby, in view of \autoref{homogeneity-thm}, we may conclude that the conditions (H5) and \eqref{balanced} are equivalent if the  \autoref{k-conjecture} holds true.
\end{remark}

\section{Minimizers and ionization effect} \label{minimizers}

 In this section we focus on the  minimizers of the problem:
\begin{equation}\label{toy}
\inf_{\rho \in \Prob_-(\R^d)} \gra{C_\infty(\rho) -\lambda \bra{v,\rho}}
\end{equation}
 where  $v\in C_0(\R^d)$ is a given external potential and $\lambda>0$ is a strength parameter. As the infimum $- M_\infty(\lambda v)$ depends only on the positive part of $v$, we will assume without loss of generality that 
$ v\ge 0$  and  $\sup v > 0.$
In this case, we know from the \autoref{homogeneity-thm}(ii) that,
under the standing assumptions (including (H4)), we have $M_\infty(v)>0$.  Therefore the minimum of \eqref{toy} is {\em strictly negative}, hence not reached at $\rho=0$.   

We will denote by $\S_\lambda(v)$ the set of minimizers: it is a non empty convex weakly* compact subset of $\Prob_-(\R^d)$.
Notice that $\S_\lambda(v)= \partial M_\infty(\lambda v)$ where $\partial M_\infty$ denotes the subdifferental of $M_\infty$ as a functional on $C_0(\R^d)$.
As stated in the introduction, the problem \eqref{toy} arises in the limit $N\to\infty$ of 
\begin{equation} \label{toyN}
\min_{\rho \in \Prob_-(\R^d)}  \gra{ \overline{C_N}(\rho) - \lambda \bra{v,\rho}}.
\end{equation}

Indeed, by the $\Gamma$-convergence of $\overline{C_N}$ (\autoref{Gconvergence}), the set of minimizers for the relaxed N-marginal problem \eqref{toyN} (namely $\partial M_N(\lambda v)$) converges to $\S_\la( v)$ in the sense of Kuratowski.

Our aim is to characterize situations where one of the following happens:
\begin{enumerate}[(a)]
\item $\S_\la(v) \subset \Prob(\R^d)$ (confinement regime);
\item $\S_\la(v) \subset  \{\|\rho \| < 1\}$  (full ionization);
\item none of (i) and (ii)  (partial ionization).
\end{enumerate}

We expect the existence of nonnegative threshold constants $\la_*(v) \le \la^*(v)$ such that $(a)$ occurs for $\la \ge \la^*(v)$ and $(b)$ for $\la \leq \la_*(v)$. This scenario will be confirmed (see subsection \ref{general-threshold}) with a strict  inequality $\la_*(v)>0$ for rapidly decreasing potenials $v$ (see Subsection \ref{estimates}). Obviously, in the strictly convex case ($\ell$ of strictly positive type), only one of  alternatives (a) and (b) are possible since  $\S_\la(v)$ is a singleton and then $\la_*(v)= \la^*(v)$.
Explicit computations of this common value are provided in the case of a radial potential $v$, $d=3$ and $\ell(r)= \frac1{r}$ (see Subsection \ref{radial}). 

\begin{remark}\label{variantF} Note that \eqref{toyN} is a particular case of
$ \inf_{\rho \in \Prob(\R^d)} \gra{C_N(\rho) + \mathcal{F}(\rho)}$ where $\mathcal{F}$ is a weakly continuous perturbation. 
It turns out that more general terms can be added to the infinite marginal energy, as for instance 
\begin{itemize}
\item  $\mathcal{F}(\rho)= \frac3{5}\int\rho^{5/3} - \la \bra{v, \rho}$ (Thomas-Fermi model (TF)) 
\item $\mathcal{F}(\rho)= \int \frac{|\nabla \rho|^2}{  \rho }- \la \bra{v, \rho}$ (Thomas-Fermi-von Weizs\"acker model (TFW))
\end{itemize}
 However, since in these cases $\mathcal{F}$ is merely weakly l.s.c., the asymptotic analysis requires  further technical arguments which are not studied in this  paper. On the other hand, the ionization issue for the related minimization problems is, to our knowledge, far to be understood as far as general interaction costs $\ell$ and external potential $v$ are considered. In case of  the TF model in dimension $d=3$ ($\ell(r) =\frac1{r}$ and $v$ a Coulomb potential generated by a finite number of nuclei), we refer to the seminal paper by Lieb-Simon \cite{Lieb-Simon77} where the regimes (a) (ionic case) and (b) (neutral case) are evidenced.

\end{remark}

\begin{remark}\label{Frostman}  A lot of work has been dedicated to the existence issue for probabilities (situation (a)) in the case of positive type cost for which $C_\infty= D_2$ and the solution to \eqref{toy} is unique (see for instance \cite{Frostman, Serfaty2015}). In all these works where the external potential $V$ appears with the opposite sign (\ie $V=-v$),  the tightness of minimizing sequences is obtained  under the condition  that the supremum of $S_2 v - c_2$ is stricly negative on the complement of $K^2$ being $K\subset \R^d$ a large compact subset.
Let us notice that the latter condition implies that $\limsup_{|x|\to+\infty} v <0$ which excludes the possibility of considering external potentials $v$ vanishing at infinity.  In this sense the technical arguments used in the aformentioned works needs to be improved  in order to handle the case $v\in C_0(\R^d)$ and  possibly relaxed solutions $\rho\in \S_\la(v)$ such that $\|\rho\|< 1$.
In Subsection \ref{estimates}, we  propose a criterium of slow decay of $v$ at infinity ensuring the confinement scenario (a) for all $\la>0$ while, in the opposite direction, the ionization scenario (b) is shown for every compactly supported $v$ and $\la$
being small enough.
\end{remark}

 \subsection{Existence of  thresholds }\label{general-threshold}

\medskip 
In this subsection, we are in the general framework of a cost function $\ell$ satisfying $(H1)-(H4)$. 
%
For futher discussions it is convenient to introduce  the function $\I: C_0(\R^d) \to [0,1]$ defined by
$$  \I(v) :=\min\{ \|\rho\| : \rho\in \partial M_\infty(v) \} ,$$
where the minimum is attained since $\partial M_\infty(v)=\S_1(v)$ is convex weakly* compact. 
Then, for every $\la\ge0$, the occurrence of scenario (a) (\ie, $\S_\la(v) \subset \Prob(\R^d)$) 
is equivalent to $\I(\la v) = 1$. Accordingly, we define the {\em upper ionization} threshold 
 \begin{equation} \label{def:la^*}
\la^*(v)  \eqdef \sup \gra{ \la >0 \st \I(\la v) < 1}
\end{equation}
and  the {\em lower ionization threshold} under which scenario (b) occurs:
\begin{equation}\label{def:la_*}
\la_*(v) \eqdef \inf \gra{\la > 0 \st S_\la(v) \cap \Prob(\R^d) \neq \emptyset}.
\end{equation}
Clearly one has $\la_*(v) \le \la^*(v)$. In case of a strict inequality,  scenario (c) (partial ionization) will occur for $\la$ in between.
Next, recalling  the monotonicity property of \autoref{subquadra}, we associate to each $v\in C_0(\R^d)$ the  constant:
$$  \K(v):= \sup_{t>0} \frac{M_\infty(t v)}{t^2} \ =\ \lim_{t\to 0_+}  \frac{M_\infty(t v)}{t^2} .$$
and the additional threshold:
\begin{equation}\label{def:ka}
  \kappa(v)  \ :=\  \sup \left\{ \la \ge 0\ :\ \frac{M_\infty(\la v)}{\la^2} = \K(v) \right\},
\end{equation}
where, by  convention, we set $\kappa(v) =0$ if $\K(v)=+\infty$. 
Note that, if $\K(v)<+\infty$, the subset of reals $\la$ appearing above coincides with the interval $[0,\kappa(v)]$.

\begin{thm}\label{2thresholds}\ Let  $v\in C_0(\R^d)$  such that $v\ge 0$ and $\sup v>0$. Then it holds:
$$  0 \le \ \la_*(v) \ \le\ \kappa(v)\le \ \la^*(v) \ <  \ +\infty.$$
Furthermore, if  $\kappa:=\kappa(v)>0$, then  $\S_{\kappa}(v)$ is a subset of $\Prob(\R^d)$ while: 
$$  \I(\la v)  = \frac{\la}{\kappa}\quad \text{and}\quad \frac{\la}{\kappa} \S_{\kappa}(v)\subset  \S_\la(v)  \quad \text{for every} \quad \la\in [0,\kappa].$$
\end{thm}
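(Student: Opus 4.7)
The substantive content lies in the ``Furthermore'' block; once it is established, both $\la_*(v)\le \kappa(v)$ and $\kappa(v)\le \la^*(v)$ follow at once, and only the finiteness $\la^*(v) < +\infty$ needs a separate argument. The key input is a sharp dual formula. From $M_\infty = D_2^*$ (assertion~$(i)$ of \autoref{MIformula}) and the 2-homogeneity of $D_2$ on $\Meas_+(\R^d)$, a rescaling $\sigma = s\eta$ with $\eta \in \Prob(\R^d)$, $s \ge 0$ in the defining supremum produces
$$
\K(v) \ =\ \sup_{\eta \in \Prob(\R^d)}\frac{\bra{v,\eta}^2}{4D(\eta)}\,, \qquad \kappa(v) \ =\ \sup\gra{\tau(\eta^*):\ k(\eta^*)=\K(v)}\,,
$$
where $k(\eta) := \bra{v,\eta}^2/(4D(\eta))$ and $\tau(\eta) := 2D(\eta)/\bra{v,\eta}$. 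Every optimizer $\eta^*$ then satisfies $\bra{v,\eta^*} = 2\K\,\tau(\eta^*)$ with $\tau(\eta^*) \le \kappa$. As an immediate consequence, the pointwise quadratic lower bound
$$
D_2(Q)\ \ge\ \frac{\bra{v,Q}^2}{4\K(v)} \qquad \forall\, Q \in \Meas_+(\R^d)
$$
holds, with equality iff $Q = 0$ or $Q/\|Q\|$ is an optimum for $\K(v)$.

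\textbf{The Furthermore block.} Assume $\kappa := \kappa(v) > 0$, so $M_\infty(\la v) = \K\la^2$ on $[0,\kappa]$. Fix $\la \in (0,\kappa]$, $\rho \in \S_\la(v)$, and choose $\nu$ optimal in \autoref{convexification} with barycenter $\rho$. Applying the pointwise bound under the integral and then Jensen's inequality to $x\mapsto x^2$ gives
$$
C_\infty(\rho)\ =\ \int D_2(Q)\, d\nu(Q)\ \ge\ \int \frac{\bra{v,Q}^2}{4\K}\, d\nu\ \ge\ \frac{\bra{v,\rho}^2}{4\K}.
$$
Since $C_\infty(\rho) = \la\bra{v,\rho} - \K\la^2$ by definition of $\S_\la(v)$, this rearranges to $(\bra{v,\rho} - 2\K\la)^2 \le 0$, so $\bra{v,\rho} = 2\K\la$ and both inequalities above are equalities $\nu$-a.e. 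The pointwise equality together with $\bra{v,Q} = 2\K\la > 0$ forces $Q/\|Q\|$ to be an optimum for $\K(v)$ for $\nu$-a.e.\ $Q$; expanding $\bra{v,Q} = \|Q\|\bra{v,Q/\|Q\|} = \|Q\|\cdot 2\K\tau(Q/\|Q\|) = 2\K\la$ and using $\tau(\eta^*) \le \kappa$ yields $\|Q\| = \la/\tau(Q/\|Q\|) \ge \la/\kappa$ $\nu$-a.e., whence $\|\rho\| = \int \|Q\|\, d\nu \ge \la/\kappa$. At $\la = \kappa$ this forces $\|\rho\| = 1$, so $\S_\kappa(v) \subset \Prob(\R^d)$. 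In the other direction, for any $\rho_\kappa \in \S_\kappa(v)$ and $\la \in [0,\kappa]$, the 2-subhomogeneity of $C_\infty$ on probabilities (\autoref{homogeneity-thm}) gives $C_\infty((\la/\kappa)\rho_\kappa) \le (\la/\kappa)^2 C_\infty(\rho_\kappa)$, and a direct substitution produces
$$
\la\bra{v,(\la/\kappa)\rho_\kappa} - C_\infty((\la/\kappa)\rho_\kappa)\ \ge\ (\la/\kappa)^2\bigl(\kappa\bra{v,\rho_\kappa} - C_\infty(\rho_\kappa)\bigr)\ =\ (\la/\kappa)^2 \K\kappa^2\ =\ \K\la^2\ =\ M_\infty(\la v),
$$
so $(\la/\kappa)\rho_\kappa \in \S_\la(v)$. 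This yields both $(\la/\kappa)\S_\kappa(v) \subset \S_\la(v)$ and $\I(\la v) \le \la/\kappa$, hence the identity $\I(\la v) = \la/\kappa$.

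\textbf{Ordering and finiteness.} The ordering follows at once: $\S_\kappa(v)\cap \Prob(\R^d) \ne \emptyset$ gives $\la_*(v) \le \kappa$, and $\I(\la v) = \la/\kappa < 1$ for $\la < \kappa$ gives $\kappa \le \la^*(v)$; the degenerate case $\kappa = 0$ reduces to $\la_*(v) = 0$, which I would prove by constructing probability minimizers for arbitrarily small $\la > 0$ using the density of the domain of $C_\infty$ (\autoref{CI-dense-domain}). The finiteness $\la^*(v) < +\infty$ is the main obstacle of the statement: I would proceed by contradiction, taking $\la_n \to +\infty$ with $\rho_n \in \S_{\la_n}(v)$ of mass $m_n < 1$, and testing the competitor $\rho_n + (1-m_n)\mu_r$ where $\mu_r$ is the uniform probability on a ball $B(x_0,r)$ around a maximum point $x_0$ of $v$, so that $\bra{v,\mu_r} \to \sup v > 0$ while $C_\infty(\mu_r) \le D(\mu_r) \le K(r,d)$ via \eqref{def:Krd}. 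An upper bound on the increment $C_\infty(\rho_n + (1-m_n)\mu_r) - C_\infty(\rho_n)$, obtained by superposing $\mu_r$ into the optimal $\nu_n$ for $\rho_n$ in the convexification formula, yields an inequality $\la_n(1-m_n)(\sup v - o_r(1)) \le K(r,d) + O(\text{cross term})$ uniform in $n$, whose contradiction emerges after a diagonal limit in $n$ and $r$ (with an auxiliary truncation of $\ell$ at height $h$ and then $h \to \infty$ when $\ell(0) = +\infty$). The technical crux is this uniform upper bound on the increment of $C_\infty$, immediate when $\ell$ is bounded and otherwise controlled by truncation.
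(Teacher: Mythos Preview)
Your argument for the ``Furthermore'' block is correct and takes a genuinely different route from the paper. The paper works through an auxiliary \emph{constancy lemma}: if $\I(\la_0 v)<1$, the ratio $M_\infty(\la v)/\la^2$ stays constant on an interval to the right of~$\la_0$, which is proved by rescaling an element of the nonconvex solution set $\widetilde{\S_{\la_0}}(v)$. You instead exploit the explicit dual identity $\K(v)=\sup_{\eta\in\Prob}\bra{v,\eta}^2/(4D(\eta))$ to obtain the pointwise lower bound $D_2(Q)\ge \bra{v,Q}^2/(4\K)$, and then combine \autoref{convexification} with Jensen's inequality to pin down $\bra{v,\rho}=2\K\la$ and force equality $\nu$-a.e. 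This is clean and gives the lower bound $\|\rho\|\ge\la/\kappa$ in one stroke; the paper needs a separate contradiction argument at that point. Your formula $\kappa(v)=\sup\{\tau(\eta^*):k(\eta^*)=\K(v)\}$ is correct under $\kappa>0$ (an optimizer then exists because $D_2$ is l.s.c.\ on the compact $\Prob_-$), and the key inequality $\tau(\eta^*)\le\kappa$ follows since any optimizer $\eta^*$ witnesses $M_\infty(\tau(\eta^*)v)=\K\tau(\eta^*)^2$.

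Two points remain incomplete. First, the degenerate case $\kappa(v)=0$ for $\la_*(v)\le\kappa(v)$: your sketch (``construct probability minimizers for small $\la$'') is vague, and in fact that statement is not what you need. The paper handles this uniformly: it assumes $\la_*>0$, observes $\I(\la v)<1$ on $(0,\la_*)$, and uses the constancy lemma to force $M_\infty(\la v)/\la^2\equiv\K$ there, whence $\la_*\le\kappa$.

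Second, and more seriously, your proposal for $\la^*(v)<+\infty$ is both incomplete and unnecessarily heavy. The competitor $\rho_n+(1-m_n)\mu_r$ requires controlling the increment $C_\infty(\rho_n+(1-m_n)\mu_r)-C_\infty(\rho_n)$, and your ``superpose $\mu_r$ into the optimal $\nu_n$'' idea does not obviously yield a uniform bound: the cross terms $\int\ell(|x-y|)\,dQ(x)\,d\mu_r(y)$ inside each $D_2$ can blow up if $\supp Q$ meets $B(x_0,r)$, and truncation in~$h$ does not commute with the minimization defining $C_\infty$. The paper's argument is two lines: pick $\rho\in\widetilde{\S_\la}(v)$ with $0<\|\rho\|<1$; then $t\mapsto D_2(t\rho)-\la t\bra{v,\rho}$ is minimized at the \emph{interior} point $t=1$ of $[0,\|\rho\|^{-1}]$, so the first-order condition gives $2D_2(\rho)=\la\bra{v,\rho}$, hence $M_\infty(\la v)=\tfrac{\la}{2}\bra{v,\rho}\le\tfrac{\la}{2}\sup v$. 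Since $M_\infty(\la v)/\la\to\sup v$ by \autoref{CI-dense-domain}, the set $\{\la:\I(\la v)<1\}$ is bounded.
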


\begin{remark} \label{linear} In case of a stricly convex functional $C_\infty$  (for instance when $\ell$ is of positive type), the situation gets much simpler since the subset $\S_\la(v)$ reduces to a single element $\rho_\la$ and  the inequalities 
 \eqref{2thresholds} become equalities, \ie:
 $\la_*(v)= \kappa(v)= \la^*(v)$  for every $v\in C_0(\R^d)$. In addition, if $\K(v)<+\infty$,  we 
recover the  linear behavior of $\rho_\la$ observed in  the examples of Subsection \ref{radial}, namely:
 $$  \rho_\la = \frac{\la}{\kappa(v)} \rho_{\kappa(v)}\quad ,\quad  M_\infty(\la v) = \K(v)\, \la^2 \quad \forall \la\in [0,\kappa(v)]\ , $$
 while  $ \|\rho_\la\|= 1 \quad  \forall \la\ge \kappa(v)$. 
 Unfortunately, the two authors did not succeed in  providing any example relying on the non uniqueness of solutions for somes values of $\la$ where the strict inequality  $\la_*(v) <\la^*(v)$ occurs.   
\end{remark}


In order to prove \autoref{2thresholds}, some preliminary steps are in order.
First we relate the set $\S_\la(v)$  to  the solutions of another simpler problem sharing the same infimum,namely
$$ \inf_{\rho \in \Prob_-(\R^d)} \gra{D_2(\rho) - \lambda \bra{v,\rho}}$$
(by \eqref{dualD}, the infimum above \ie $- D_2^*(\la v)$ coincides with $-M_\infty(\la v)$).
Since  $D_2$ is weakly* l.s.c. on $ \Prob_-(\R^d)$, the set of solutions to the latter problem is therefore a non empty compact subset $\widetilde{\S_\la}(v)\subset \S_\la(v)$. The counterpart of the function $\I(v)$ defined above is then
$$ \tilde{\I}(v):=\min\{ \|\rho\| : \rho\in \widetilde{\S_1}(v) \} .$$
Notice that the equality $C_\infty(\rho)=D_2(\rho)$ holds for any element $\rho\in \widetilde{\S_\la}(v)$.

\begin{lemma}\label{nonconvex} \  Let be given $\la \ge 0$ and $v\in C_0(\R^d)$. 
Then  
$$ \S_\la(v) = \ov{\co} \left(\widetilde{\S_\la}(v)\right), \quad \tilde{\I}(\la v) =  \I(\la v) .$$
 \end{lemma}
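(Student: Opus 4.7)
The plan is to leverage the duality identity $C_\infty = D_2^{**}$ proved in \autoref{MIformula}(ii) together with the barycentric representation of $C_\infty$ obtained in \autoref{convexification}.

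First, I would observe that both problems defining $\S_\la(v)$ and $\widetilde{\S_\la}(v)$ share the same infimum value, namely $-M_\infty(\la v) = -D_2^*(\la v)$, by \eqref{dualD}. The inclusion $\widetilde{\S_\la}(v) \subset \S_\la(v)$ is then immediate (a minimizer of a larger functional that attains the common infimum is automatically a minimizer of the smaller one). Also, $\S_\la(v)$ is weakly$^*$ closed and convex, so $\ov{\co}(\widetilde{\S_\la}(v)) \subset \S_\la(v)$ once we have the trivial inclusion.

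For the non-trivial inclusion $\S_\la(v) \subset \ov{\co}(\widetilde{\S_\la}(v))$, I would pick $\rho \in \S_\la(v)$ and apply \autoref{convexification} to produce some $\nu \in \Prob(\Prob_-(\R^d))$ with barycenter $\rho$ and $C_\infty(\rho) = \int D_2(Q)\,\nu(dQ)$. Using the barycenter identity $\bra{v,\rho} = \int \bra{v,Q}\,\nu(dQ)$, I would rewrite
\begin{equation*}
-M_\infty(\la v) = C_\infty(\rho) - \la\bra{v,\rho} = \int \bigl[D_2(Q) - \la\bra{v,Q}\bigr]\,\nu(dQ).
\end{equation*}
Since the integrand is pointwise $\ge -M_\infty(\la v)$, equality of the integral forces $\nu$ to be concentrated on $\widetilde{\S_\la}(v)$; thus $\rho$ is the barycenter of a probability supported on $\widetilde{\S_\la}(v)$, which exactly says $\rho \in \ov{\co}(\widetilde{\S_\la}(v))$ (using that on the weakly$^*$ compact metrizable set $\Prob_-(\R^d)$, the closed convex hull of a compact set coincides with the set of barycenters of probabilities supported on it).

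For the mass identity $\tilde\I(\la v) = \I(\la v)$, one direction $\I(\la v) \le \tilde\I(\la v)$ is again immediate from $\widetilde{\S_\la}(v)\subset \S_\la(v)$. For the other direction, I would take any $\rho \in \S_\la(v)$, represent it as a barycenter of $\nu$ concentrated on $\widetilde{\S_\la}(v)$ as above, and use $\|\rho\| = \int \|Q\|\,\nu(dQ)$ together with $\|Q\| \ge \tilde\I(\la v)$ on the support of $\nu$ to conclude $\|\rho\| \ge \tilde\I(\la v)$; minimizing over $\rho \in \S_\la(v)$ yields $\I(\la v) \ge \tilde\I(\la v)$. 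The main subtlety I would double-check is the barycentric representation statement (every point in $\ov{\co}(K)$ for weakly$^*$ compact $K \subset \Prob_-(\R^d)$ arises as the barycenter of a Borel probability on $K$), which follows from standard Choquet-type results in the metrizable compact convex setting, but is the one place where one has to be careful rather than purely formal.
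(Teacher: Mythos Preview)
Your proposal is correct and follows essentially the same argument as the paper: both use the barycentric representation from \autoref{convexification} to write an optimal $\rho$ as $\int Q\,\nu(dQ)$ with $C_\infty(\rho)=\int D_2(Q)\,\nu(dQ)$, then conclude that $\nu$ is concentrated on $\widetilde{\S_\la}(v)$ by comparing the integral to the common minimum value, and finally derive the mass identity by integrating $\|Q\|\ge\tilde\I(\la v)$ against $\nu$. Your extra remark about the Choquet-type identification of $\ov{\co}(K)$ with barycenters is a reasonable piece of care that the paper leaves implicit.
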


\begin{proof}\ To establish the first equality, we pick up any element $\rho \in \S_\la(v)$ to which we associate, by means of  the representation formula \eqref{cinfty-versus-D2},  a suitable  $\nu\in \Prob(\Prob_-(\R^d))$ such that  $\rho=\int Q \, \nu(dQ)$. Then we have:  
$$ -M_\infty(\la v) = C_\infty(\rho) - \la \bra{v,\rho} = \int \left( D_2(Q) -   \la \bra{v,Q}   \right)\, \nu(dQ).$$
Since $D_2(Q) -   \la \bra{v,Q} \ge -M_\infty(\la v)$ for every $Q\in \Prob_-(\R^d)$,  the equality above implies that
$Q\in \widetilde{\S_\la}(v)$ for $\nu$-almost all $Q$.  The inclusion 
$ \S_\la(v) \subset \ov{\co} \left(\widetilde{\S_\la}(v)\right)$ follows. Since $\S_\la(v)$ is convex and compact, the converse inclusion is obvious  as well as the inequality $ \I(\la v) \le \tilde{\I}(\la v) $. It remains to show that  $ \I(\la v) \ge \tilde{\I}(\la v) $.
To that aim we select an element  $\rho \in S_\la(v)$ of minimal mass , \ie\  such that that  $\|\rho\|=\I(\la v)$. 
As before, it holds $Q\in \widetilde{\S_\la}(v)$ thus $\|Q\|\ge \tilde{\I}(\la v)$ for $\nu$-almost all $Q\in \Prob_-(\R^d)$.
By integrating with respect to $\nu$, we arrive to  $\I(\la v)=\|\rho\|= \int \|Q\| \, \nu(dQ) \ge  \tilde{\I}(\la v)$. 
This concludes the proof.
\end{proof}

 Next we give the following constancy result which will be crucial:
\begin{lemma}\label{constancy} Let $\la_0>0$ and  $v\in C_0(\R^d)$ be such that $0<\I(\la_0 v)<1$. Then, whenever $\la_0 \le \la < \frac{\la_0}{\I(\la_0 v)}$, we have
  $$ \I(\la v)<1 \ \text{and}\quad \frac{M_\infty(\la v)}{\la^2}= \frac{M_\infty(\la_0 v)}{\la_0^2}  \quad
.$$
\end{lemma}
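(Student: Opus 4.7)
The plan is to exhibit an explicit minimizer for $\S_\lambda(v)$ for each $\lambda$ in the stated range by rescaling a minimal-mass element of $\S_{\lambda_0}(v)$, and to verify optimality by a one-variable (scalar) argument along a ray. The key is that a minimizer of minimal mass lying strictly between $0$ and $1$ must satisfy a first-order condition along its own ray, which fixes the quadratic scaling in $\lambda$.

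First, using \autoref{nonconvex}, I would pick $\rho_0 \in \widetilde{\S_{\lambda_0}}(v)$ with $\|\rho_0\| = \tilde{\I}(\lambda_0 v) = \I(\lambda_0 v)$, and write $\rho_0 = \|\rho_0\|\,Q_0$ with $Q_0 \in \Prob(\R^d)$, so that $D_2(\rho_0) = \|\rho_0\|^2 D(Q_0)$. Testing the minimality of $\rho_0$ for $D_2 - \lambda_0\langle v,\cdot\rangle$ against the competitors $tQ_0$, $t\in[0,1]$, reduces the problem to the scalar quadratic
\[
 g(t) = t^2 D(Q_0) - \lambda_0 t\,\langle v, Q_0\rangle .
\]
Since $\|\rho_0\|\in(0,1)$ is an interior minimizer of $g$ on $[0,1]$, the derivative vanishes, giving $2\|\rho_0\|D(Q_0) = \lambda_0\langle v,Q_0\rangle$; note that $D(Q_0)>0$ here, since $M_\infty(\lambda_0 v)>0$ by \autoref{subquadra} (as $\sup v>0$).

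Now fix $\lambda$ with $\lambda_0 \le \lambda < \lambda_0/\|\rho_0\|$ and set $t_\lambda := (\lambda/\lambda_0)\|\rho_0\| = \lambda\langle v,Q_0\rangle/(2D(Q_0))$, $\rho_\lambda := t_\lambda Q_0$. By construction $t_\lambda \in (0,1)$, so $\rho_\lambda \in \Prob_-(\R^d)$, and $t_\lambda$ is the unconstrained minimizer of $t\mapsto t^2 D(Q_0) - \lambda t\,\langle v,Q_0\rangle$. A direct computation then gives
\[
 D_2(\rho_\lambda) - \lambda\langle v,\rho_\lambda\rangle
 \;=\; -\frac{\lambda^2 \langle v,Q_0\rangle^2}{4D(Q_0)}
 \;=\; \Bigl(\frac{\lambda}{\lambda_0}\Bigr)^{\!2}\!\bigl(D_2(\rho_0)-\lambda_0\langle v,\rho_0\rangle\bigr)
 \;=\; -\Bigl(\frac{\lambda}{\lambda_0}\Bigr)^{\!2} M_\infty(\lambda_0 v).
\]

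Since $-M_\infty(\lambda v) = \inf\{D_2(\rho) - \lambda\langle v,\rho\rangle\}$ (by $D_2^\ast = D^\ast$ in \eqref{D*=D2*} together with \eqref{dualD}), this yields $M_\infty(\lambda v)/\lambda^2 \ge M_\infty(\lambda_0 v)/\lambda_0^2$. The reverse inequality is precisely the monotone non-increase of $\lambda\mapsto M_\infty(\lambda v)/\lambda^2$ furnished by \autoref{subquadra}, so equality holds and $\rho_\lambda$ belongs to $\S_\lambda(v)$. In particular $\I(\lambda v) \le \|\rho_\lambda\| = t_\lambda < 1$, which gives both conclusions. The proof really has no major obstacle: the one delicate point is simply to observe that an interior minimum in mass forces a scalar first-order condition at $\rho_0$, after which the 2-homogeneity of $D_2$ does all the work.
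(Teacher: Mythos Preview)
Your proof is correct and follows essentially the same approach as the paper: pick a minimal-mass element $\rho_0\in\widetilde{\S_{\lambda_0}}(v)$, rescale it along its own ray, and close the argument with the monotonicity of $\lambda\mapsto M_\infty(\lambda v)/\lambda^2$ from \autoref{subquadra}. The paper is slightly more direct in that it never invokes the scalar first-order condition: writing $\lambda=\lambda_0/\theta$ with $\|\rho_0\|<\theta\le 1$, the two-homogeneity of $D_2$ gives immediately $M_\infty(\lambda_0 v)=\theta^2\bigl(\langle \lambda v,\rho_0/\theta\rangle - D_2(\rho_0/\theta)\bigr)\le \theta^2 M_\infty(\lambda v)$, and monotonicity turns this into equality. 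Your derivation of $g'(\|\rho_0\|)=0$ is correct but not needed; also, your justification that $D(Q_0)>0$ via \autoref{subquadra} tacitly uses (H4), whereas it follows directly from the hypothesis $\I(\lambda_0 v)>0$ (which rules out $\rho=0$ as a minimizer and hence forces $M_\infty(\lambda_0 v)>0$).
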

\begin{proof} \ Let $\rho_0\in \widetilde{S_{\la_0}} (v)$ be such that $\|\rho_0\|= \tilde{\I}(\la_0 v)$ and  let $\la=\frac{\la_0}{\theta}$ where  $\theta$ is any real number such that  $\| \rho_0\| < \theta \le 1$.  Then we have
\begin{align*} M_\infty (\la_0 v)  &= \la_0 \bra{v,\rho_0} - D_2(\rho_0) \\
&= \theta^2  \pa{\bra{\frac{\la_0 v}{\theta},\frac{\rho_0}{\theta}} - D_2(\frac{\rho_0}{\theta})}\\
& \le\  \theta^2\,  D_2^* \pa{ \frac{\la_0 v}{\theta}}\ ,
\end{align*} 
where in the second line, we exploit the two-homogeneity property of $D_2$. 
By the monotonicity \autoref{subquadra} and thanks to the equality $D_2^*=M_\infty$, we infer that the inequality of the last line is actually an equality, whence
$M_\infty(\la v) = \frac {\la^2}{\la_0^2} M_\infty(\la_0 v)$ as claimed . In addition, by setting $\rho:= \frac{\rho_0}{\theta}$  and after dividing by $\theta^2$,
we are led to the relation $\bra{\la v, \rho} - D_2(\rho) = M_\infty(\la v)$. Therefore $\rho\in \widetilde{S_\la}(v)$ and we get:
$$\I(\la v)\ =\ \tilde{\I}(\la v)\ \le\ \|\rho\|\  = \ \frac{\|\rho_0\|}{\theta}\ <\ 1\ .$$
\end{proof}
\begin{proof}[Proof of \autoref{2thresholds}]\ In a first step, we show that $\la^*(v)< +\infty$. Let $\la>0$ such that $\I(\la v)<1$. Then, as $ \widetilde{\I}(\la v)=\I(\la v)$ by \autoref{nonconvex},
there exist $\rho\in \widetilde{S_\la(v)}$ such that $0<\|\rho\|<1$. Then the polynomial  function 
 $$g_\lambda(t) :=  D_2(t\rho) - \lambda \,  \bra{tv,\rho}\ =\  t^2\,  D_2(\rho) - \lambda\, t \,  \bra{v,\rho} $$
 reaches its  minimum on the interval $[0, \| \rho\|^{-1}]$ at $t=1$.
 The necessary condition $g_\lambda '(1)=0$ implies then the equi-partition of energies:
$$ D_2(\rho) = \frac{ \lambda}{2} \bra{v, \rho} = M_\infty(\lambda v), $$ 
from which we deduce
$$ \gra{\la >0 \st \I(\la v) <1 } \subset \gra{ \la >0 \st  M_\infty(\lambda v) \le \frac{ \lambda}{2} \sup v}. $$

Clearly the subset of real numbers in the right hand side above is a finite interval since, by \autoref{CI-dense-domain}, the function $t\mapsto \frac{ M_\infty(tv)}{t}$ is monotone and converges increasingly to $\sup v_+$ as $t\to\infty$.
The desired inequality  $\la^*(v)< +\infty$ follows. 

\medskip
In a second step, we show that $ \la_*(v) \le \kappa(v)$. It is not restrictive to assume that $\la_* \eqdef \la_*(v) > 0$.
Then, obviously,  we have $\I(\la v)<1$ for every $\la<\la_*$. Since $M_\infty$ is convex and Lipschitz, the function $k(t) \eqdef \frac{M_\infty(t v)}{t^2}$ is
locally Lipschitz on $(0, +\infty)$ where it admits left and right derivatives. 
By applying the constancy \autoref{constancy}, we infer that $k'(\la+0) = 0$ for every $\la\in (0, \la_*)$ which in turn implies that $k$ is constant on the whole interval. This constant is finite and coincides with $\K(v) = \lim_{t\to 0_+} k(t)$. In virtue of \eqref{def:ka}, we deduce that $\la_*(v)\le \kappa(v)$.

\medskip
In a third step, we show that $\kappa(v) \le \la^*(v)$. It is not restrictive to assume that $\kappa \eqdef \kappa(v) > 0$.
Let $\rho_*\in S_{\kappa}(v)$. Then we claim that $\rho_* \in \Prob(\R^d)$. Indeed, suppose that $\|\rho_*\|<1$.  Then 
$\I(\kappa v) <1$ and by \autoref{constancy}, we would get that 
\[ \K(v) = \frac{M_\infty(\kappa v)}{\kappa^2} = \frac{M_\infty(\la v)}{\la^2}, \]
for some $\la > \kappa$. This is not possible in view of \eqref{def:ka}. 
Therefore  we have $S_{\kappa}(v) \subset \Prob(\R^d)$ and $\I(\kappa v) = 1$.  The desired inequality 
$\kappa(v) \le  \la^*(v)$ is now straightforward from the definition \eqref{def:la^*}.

\medskip
In a last step, we consider a real $\la$ such that $0\le \la \le \kappa$ and show first that 
$\frac{\la}{\kappa} S_{\kappa}(v)\subset  S_\la(v)$ and then that   $\I(\la v)  = \frac{\la}{\kappa}$.
Letting $\rho_*\in S_{\kappa}(v)$ and $t\in [0,1]$, by using the constancy of the ratio $\frac{M_\infty(\la v)}{\la^2}$ for $\la\in (0, \kappa]$ (see \autoref{constancy}), we have 
\[ D_2(t\rho_* ) - \bra{ \la v,t\rho_*} = t^2 \left( D_2(\rho_*) - \bra{\kappa v,\rho_*} \right)
= - t^2 M_\infty( \kappa v) = - M_\infty( t \kappa v). \]
In particular, for  $t= \frac{\la}{\kappa}$, we infer  that $\frac{\la}{\kappa} \rho_* \in S_\la(v)$ and, thanks to  $\|\rho_*\| = 1$ (see third step), that $\I(\la v)\le \frac{\la}{\kappa}$.
Eventually we claim that a strict inequality $\I(\la v) < \frac{\la}{\kappa}$ is not possible for $\la\in (0, \kappa)$.
Indeed, it would imply the existence of $\rho\in S_\la(v)$ such that $\|\rho \|< \frac{\la}{\kappa}$. Then, by setting  $\mu \eqdef \frac{\rho}{\|\rho\|}$ and $\tilde{\la} \eqdef \frac{\la}{\|\rho\|}$, we would get
\[ M_\infty(\la v) = \la \bra{v,\rho} -C_\infty(\rho) = \|\rho\|^2\, ( \tilde{\la} \bra{v,\mu} - C_\infty(\mu))
\le  \|\rho\|^2\,  M_\infty( \tilde{\la} v), \] 
whence $\K(v)= \frac{M_\infty(\la v)}{\la^2} \le \frac{M_\infty(\tilde{\la} v)}{\tilde{\la}^2}$. 
In virtue of \eqref{def:ka}, the latter inequality  is not  possible since  $\tilde{\la} = \frac{\la}{\|\rho\|}> \kappa$.
The proof of \autoref{2thresholds} is now complete.
\end{proof}

\subsection{Optimality conditions}\label{opticond} \ To every $\rho \in \Prob_-(\R^d)$, we associate
the convolution potential   $u_\rho: \R^d \mapsto [0, +\infty]$ defined by
\begin{equation}\label{def:urho}
u_\rho(x) \eqdef \int \ell(|y-x|) \, \rho(dy) .
\end{equation}
It is a lower semicontinuous function and, as  $D_2(\rho) = \int u_\rho\, d\rho$,  it holds $u_\rho \in L^1_\rho$ whenever $D_2(\rho)$ is finite.

\begin{prop}\label{cnsD} \  Let be given $\la \ge 0$ and $v\in C_0(\R^d)$. 
\begin{enumerate}[(i)]
\item ({\it necessary condition})\ Let $\rho\in \widetilde{\S_\la}(v)$. Then there exists a constant $c_\la$ such that 
\begin{align} \label{cnD} \begin{cases} 
(a)& c_\la \le 0 , \quad  c_\la (1 -\|\rho\|) \ge 0, \\
(b)& u_\rho - \frac{\la}{2} v = c_\la  \quad \text{$\rho$- a.e.},  \\ 
(c)& u_\rho - \frac{\la}{2} v \ge c_\la  \quad \text{$\nu$- a.e} ,\ \text{whenever $D_2(\nu) + \int u_\rho \, d\nu<\infty$.} \end{cases}
 \end{align}

\item Assume that $D_2$ is convex (thus $C_\infty=D_2$ and $\widetilde{\S_\la}(v)=\S_\la(v)$). Then:
  $$\rho\in \S_\la(v)  \ \iff\ \exists c_\la\ \text{such that 
\eqref{cnD}  holds}.$$

\end{enumerate}
 \end{prop}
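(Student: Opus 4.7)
For (i), the plan is to fix $\rho\in \widetilde{\S_\la}(v)$ (so that $D_2(\rho)<\infty$) and test against admissible perturbations in $\Prob_-(\R^d)$, exploiting the quadratic expansion $D_2(\rho+t\mu)=D_2(\rho)+2t\int u_\rho\,d\mu+t^2 D_2(\mu)$. To obtain (b), I would first use the mass-preserving perturbation $\rho_t=(1+tf)\rho$ with $f$ bounded satisfying $\int f\,d\rho=0$; this stays in $\Prob_-$ for $|t|$ small, and differentiation of the objective at $t=0$ gives the stationarity $\int f(u_\rho-\tfrac{\la}{2}v)\,d\rho=0$ for all such $f$, which forces $u_\rho-\tfrac{\la}{2}v$ to be $\rho$-a.e. constant. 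Define $c_\la$ to be that constant. For (a), I would test with the rescalings $\rho_t=(1-t)\rho$ (admissible for $t\in[0,1]$) to obtain, via (b), the inequality $c_\la\|\rho\|\le 0$; when $\|\rho\|<1$ I would additionally use $\rho_t=(1+t)\rho$ (admissible for small $t\ge 0$) to force $c_\la=0$.

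For (c), I would fix $\nu\in \Prob_-$ with $D_2(\nu)+\int u_\rho\,d\nu<\infty$ and consider $A:=\{u_\rho-\tfrac{\la}{2}v<c_\la\}$. Assuming for contradiction $\nu(A)>0$, set $\nu_A:=\nu\res A$ and $\alpha:=\|\nu_A\|/\|\rho\|$. The main obstacle I anticipate here is that the naive perturbation $\rho+t\nu_A$ is admissible only when $\|\rho\|<1$; the right universal choice is the mass-preserving $\rho_t:=(1-t\alpha)\rho+t\nu_A$, which lies in $\Prob_-$ for $t>0$ small in both subcases $\|\rho\|<1$ and $\|\rho\|=1$. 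Its derivative at $t=0$ gives the optimality inequality $2\int u_\rho\,d\nu_A-\la\langle v,\nu_A\rangle\ge \alpha\,(2D_2(\rho)-\la\langle v,\rho\rangle)$; by (b) the right-hand side equals $2c_\la\|\nu_A\|$, so $\int(u_\rho-\tfrac{\la}{2}v-c_\la)\,d\nu_A\ge 0$, contradicting $u_\rho-\tfrac{\la}{2}v-c_\la<0$ on $A\supset \supp\nu_A$ whenever $\nu_A\ne 0$.

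For (ii), necessity reduces to (i) since $\widetilde{\S_\la}(v)=\S_\la(v)$ under the convexity of $D_2$ (equivalently $C_\infty=D_2$, by \autoref{MIformula}). For sufficiency, fix $\rho$ satisfying (a), (b), (c) and any $\rho'\in \Prob_-$, assuming $D_2(\rho')<\infty$ (else the target inequality is trivial). The convex quadratic $D_2$ admits the subgradient inequality $D_2(\rho')\ge -D_2(\rho)+2\int u_\rho\,d\rho'$, with $\int u_\rho\,d\rho'$ finite by Cauchy--Schwarz for the positive semidefinite bilinear form associated with $\ell$. Applying (c) with $\nu=\rho'$ yields $2\int u_\rho\,d\rho'-\la\langle v,\rho'\rangle\ge 2c_\la\|\rho'\|$, while (b) gives $D_2(\rho)-\la\langle v,\rho\rangle=-D_2(\rho)+2c_\la\|\rho\|$. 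The target inequality thus reduces to $c_\la(\|\rho'\|-\|\rho\|)\ge 0$, which follows from (a) together with $\|\rho'\|\le 1$: if $\|\rho\|<1$ then $c_\la=0$; if $\|\rho\|=1$ then $c_\la\le 0$ and $\|\rho'\|-\|\rho\|\le 0$.
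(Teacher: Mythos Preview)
Your argument is correct and follows the same first–variation philosophy as the paper, but the organization differs. The paper uses a single convex perturbation $\rho_\e=(1-\e)\rho+\e\nu$ for admissible $\nu\in\Prob_-(\R^d)$, which yields in one stroke the variational inequality $\langle u_\rho-\tfrac{\la}{2}v,\nu\rangle\ge \langle u_\rho-\tfrac{\la}{2}v,\rho\rangle$; conditions (a), (b), (c) then drop out by specializing $\nu$ (to $0$, to $\rho/\|\rho\|$, and by localizing on pieces of $\rho$). You instead engineer three separate perturbations: mass-preserving $(1+tf)\rho$ with $\int f\,d\rho=0$ to isolate (b), the scalings $(1\pm t)\rho$ for (a), and the mass-preserving mixture $(1-t\alpha)\rho+t\nu_A$ for (c) by contradiction. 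Both routes are sound; the paper's is more economical, while yours makes each step more explicit (in particular, your derivation of (b) via two-sided variations is cleaner than the paper's one-line ``by localizing''). For (ii), your explicit use of the subgradient inequality $D_2(\rho')\ge -D_2(\rho)+2\int u_\rho\,d\rho'$ and the case split on $\|\rho\|$ spells out what the paper simply asserts (``as well known for convex problems''); your finiteness of $\int u_\rho\,d\rho'$ via the positive semidefinite bilinear form matches the paper's inequality $\int u_\rho\,d\nu\le\tfrac12(D_2(\rho)+D_2(\nu))$.
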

 
 \begin{remark} \label{equirepar}  By integrating the equality \eqref{cnD}(b) with respect to $\rho$, we obtain the identities \begin{equation}\label{integralident} 
 C_\infty( \rho_\la) = \frac{\la}{2} \bra{v,\rho_\la} + c_\la\, \|\rho\|, \quad M_\infty(\la v)=  \frac{\la}{2} \bra{v,\rho_\la}
 -c_\la\, \|\rho_\la\|, 
 \end{equation}
 for any pair satisfying condition  \eqref{cnD}. 
 \end{remark}  
 
 \begin{proof} First we observe that, for every $\rho\in \Prob_-(\R^d)$,  $u_\rho$ is a lower semicontinuous function from $\R^d$ to $[0, +\infty]$ such that $D_2(\rho) = \int u_\rho\, d\rho$. Hence $u_\rho \in L^1_\rho$ whenever $D_2(\rho)$ is finite. It is the case in particular when $\rho$ is an element of $\widetilde{\S_\la}(v)$ and then, for every non negative measure $\nu$ admissible in the sense of condition (c), we have
  $$ \lim_{\e\to 0_+} \frac1{\e} (D_2( \rho + \e(\nu - \rho)) - D_2(\rho)) =2\, \bra{ u_\rho, \nu}. $$
The  minimality of $\rho$ implies then the following variational inequality:
 $$ \bra{ u_\rho -  \frac{\la}{2}v,  \nu} \ge \bra{u_\rho -  \frac{\la}{2}v, \rho} =\vcentcolon c_\la. $$ 
This  is the exact counterpart of the condition given  in \cite{Frostman, Serfaty2015} which we extend to 
  admissible competitors $\nu\in \Prob_-(\R^d)$. By taking $\nu=0$, we deduce that the constant $c_\la$ defined above
  satisfies  $c_\la\le 0$ while, if $\|\rho\|<1$, we get $c_\la\ge 0$ by taking $\nu= \frac{\rho}{\|\rho \|}$.
  By localizing , we deduce the necessary  optimality conditions \eqref{cnD} given in assertion i).
  Next we observe that, if $D_2$ is convex, then the inequality $ \int u_\rho \, d\nu \le \frac1{2} (D_2(\rho) + D_2(\nu))$ ensures  the validity of condition \eqref{cnD}(c) assuming merely that $D_2(\nu)<+\infty$. 
  Then clearly, conditions \eqref{cnD} imply the variational inequality above which, as well known  for convex problems, characterizes an optimal $\rho$.
  \end{proof} 
   

\begin{corollary}\label{supprho} \ Let $\la>0$ and  $\rho\in \S_\la(v)$. Then $\supp \rho \subset \supp v$. 
\end{corollary}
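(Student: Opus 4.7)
The plan is to combine the explicit optimality condition of \autoref{cnsD} with the barycentric representation provided by \autoref{nonconvex} and \autoref{convexification}, then argue by contradiction using the lower semicontinuity of the potential $u_\rho$.

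First I would reduce to the case $\rho \in \widetilde{\S_\la}(v)$. Given any $\rho \in \S_\la(v)$, \autoref{convexification} yields $\nu \in \Prob(\Prob_-(\R^d))$ with $\rho = \int Q\,\nu(dQ)$ and $C_\infty(\rho) = \int D_2(Q)\,\nu(dQ)$. Subtracting $\la \bra{v,\rho}$ from both sides and using that $D_2(Q) - \la \bra{v,Q} \ge -M_\infty(\la v)$ for every $Q$, we deduce that $\nu$-a.e.\ $Q$ belongs to $\widetilde{\S_\la}(v)$. If the inclusion $\supp Q \subset \supp v$ is known for every $Q \in \widetilde{\S_\la}(v)$, then for any open $U$ disjoint from $\supp v$ we have $\rho(U) = \int Q(U)\,\nu(dQ) = 0$, which gives $\supp \rho \subset \supp v$.

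Next, I fix $\rho \in \widetilde{\S_\la}(v)$ and invoke \autoref{cnsD}(i): there exists $c_\la \le 0$ such that $u_\rho = c_\la + \tfrac{\la}{2} v$ on a set of full $\rho$-measure, where $u_\rho(x) = \int \ell(|x-y|)\,\rho(dy) \ge 0$ is lower semicontinuous by (H2) and Fatou's lemma. Suppose for contradiction that some $x_0 \in \supp \rho \setminus \supp v$. Choose $\varepsilon > 0$ with $B(x_0,\varepsilon) \cap \supp v = \emptyset$, so $v \equiv 0$ on $B(x_0,\varepsilon)$; then $u_\rho = c_\la$ on a $\rho$-full subset of $B(x_0,\varepsilon)$. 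Since $u_\rho \ge 0$ and $c_\la \le 0$, this forces $c_\la = 0$ and $u_\rho = 0$ on that subset. Because $x_0 \in \supp \rho$, for every $n$ with $1/n < \varepsilon$ the ball $B(x_0, 1/n)$ carries positive $\rho$-mass, hence contains a point $x_n$ with $u_\rho(x_n) = 0$; lower semicontinuity then yields $u_\rho(x_0) = 0$.

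To close the argument, the identity $0 = u_\rho(x_0) = \int \ell(|x_0 - y|)\,\rho(dy)$ together with $\ell \ge 0$ implies $\ell(|x_0 - y|) = 0$ for $\rho$-a.e.\ $y$. By (H1) and (H2), $\ell(0) > 0$ and $\ell$ is lower semicontinuous, so $\ell > 0$ on some interval $[0,\delta]$; hence $\rho(B(x_0,\delta)) = 0$, contradicting $x_0 \in \supp \rho$. The only real subtlety is the first step: since \autoref{cnsD}(i) is stated only for elements of $\widetilde{\S_\la}(v)$, the barycentric decomposition is needed to transfer the support inclusion from $\widetilde{\S_\la}(v)$ to a general $\rho \in \S_\la(v)$ in the non-strictly-convex regime.
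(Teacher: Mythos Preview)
Your proof is correct and follows the same overall strategy as the paper: reduce to $\rho \in \widetilde{\S_\la}(v)$ via \autoref{nonconvex}, then use the optimality condition \eqref{cnD}(b) together with $c_\la \le 0$ and the positivity of $\ell$ near $0$ to rule out any mass of $\rho$ outside $\supp v$.

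The tactical execution differs slightly. The paper bounds $u_\rho(x) \ge \underline{\ell}(r)\,\rho(B(x,r))$ and, noting that the exceptional set where the optimality relation fails is open and $\rho$-negligible (hence disjoint from $\supp\rho$), derives the quantitative estimate $\rho(B(x,r)) \le \frac{\la}{2}\,\frac{v(x)}{\underline{\ell}(r)}$ for all $x\in\supp\rho$; the support inclusion then follows by taking $r$ small. Your route is a direct pointwise contradiction using lower semicontinuity of $u_\rho$ to force $u_\rho(x_0)=0$ and then $\rho(B(x_0,\delta))=0$. Your argument is arguably cleaner for this corollary alone, but the paper's version yields the inequality \eqref{estiball} as a byproduct, which is reused in the proof of \autoref{esti-la_*}.
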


\begin{proof} \ In view of \autoref{nonconvex}, it is enough to show
  the inclusion $\supp \rho \subset \supp v$  when  $\rho\in \widetilde{\S_\la}(v)$.
 By appying the optimality condition in \eqref{cnD}(b) of \autoref{cnsD} and since $c_\la\le 0$, we infer that $u_\rho \le \frac{\la}{2} \, v$ holds $\rho-$a.e.
On the other hand, for any   $r>0$ ,  we have the inequality: 
 $$ u_\rho(x) \ge \int_{B(x,r)} \ell(|y-x|) \, \rho(dy) \ge \underline{\ell}(r) \rho (B(x,r)), $$
 being  $\underline{\ell}(r)$ defined by \eqref{def:linf-lsup}.   
It follows that $A_r \eqdef \left\{x\in\R^d : \ \rho(B(x,r))  >   \frac{\la}{2} \ \frac{v(x)}{\underline{\ell}(r)}\right\}$
is $\rho$-negligible. As can be readily checked, the map $x\mapsto  \rho(B(x,r))$ is lower semicontinuous. Hence
by the continuity of $v$, $A_r$ is an open subset of $\R^d\setminus\supp(\rho)$ and we are led to the inequality 
\begin{equation} \label{estiball}
\rho(B(x,r)) \leq \frac{\la}{2} \frac{v(x)}{\underline{\ell}(r)}, \quad \forall x\in \supp(\rho) .
\end{equation}
By \autoref{ellem}, we know that  $\underline{\ell}(r)>0$ for small values of $r$. Therefore
 the inclusion $\supp \rho \subset \supp v$  is straightforward by applying \eqref{estiball}.  
\end{proof}

\subsection{Estimates for the threshold values}\label{estimates}

Let us  associate with the cost $\ell$ 
its lower and upper non increasing counterparts:
\begin{equation}\label{def:linf-lsup}
\underline{\ell}(r) \eqdef  \inf \{ l(t) : t\le  r\}, \quad  \ov{\ell}(r) \eqdef  \sup \{ l(t) : t\ge  r\}.
\end{equation}
 In the forthcoming estimates, we will also need the average of a radial function $f(|x|)$ on a ball, namely: 
 \begin{equation}\label{def:avl}
 [f](r) \eqdef \frac{d}{r^d} \int_0^r f(t)\, t^{d-1} \, dt = d \int_0^1 f(r s)\,  s^{d-1}\, ds.   \end{equation}
By \eqref{def:Krd}, the function $[\ell](r)$ is finite on $(0,+\infty)$ and  it is easy to check that:
$$ \underline{\ell}(2 r) \le\ K(r,d)  \le 2^d\, [\ell](2r). $$
 \begin{lemma}\label{ellem}  The functions $ \underline{\ell},  \ov{\ell}, [\underline{\ell}],  [\ov{\ell}]$ are l.s.c. monotone non increasing
and vanish at infinity. Moreover $\underline{\ell}$ is positive at least on a small interval $[0,\delta]$. 
\end{lemma}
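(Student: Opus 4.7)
My plan is to prove the four assertions separately via short, direct arguments from the definitions together with (H1)--(H3). \emph{Monotonicity:} for $\underline{\ell}$ and $\ov{\ell}$ this is immediate from the inclusion of the sets $\{t\le r\}$ and $\{t\ge r\}$ over which the infimum and supremum are taken. For $[\underline{\ell}]$ and $[\ov{\ell}]$ I would rewrite $[f](r) = d\int_0^1 f(rs)\, s^{d-1}\, ds$ via $t=rs$, after which $r\mapsto f(rs)$ being non-increasing for each fixed $s>0$ transfers the property to $[f]$.

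For \emph{lower semicontinuity}, since $\ell$ is l.s.c. by (H2) the sublevel set $A_c\eqdef\{\ell\le c\}$ is closed and the infimum of $\ell$ on any compact subinterval is attained. Hence $\{\underline{\ell}\le c\} = \{r\colon [0,r]\cap A_c\ne\emptyset\} = [\inf A_c,+\infty)$ is closed. Symmetrically, $\{\ov{\ell}>c\} = [0,\sup B_c)$ with $B_c\eqdef\{\ell>c\}$ open (note $\sup B_c\notin B_c$), so $\ov{\ell}$ is l.s.c. For the averages I would apply Fatou's lemma to the scaled representation $d\int_0^1 f(rs)s^{d-1}\,ds$, using the l.s.c.\ of $\underline{\ell}$ and $\ov{\ell}$ just established.

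For \emph{vanishing at infinity}, (H3) gives, for any $\epsilon>0$, some $R$ with $\ell(t)<\epsilon$ on $[R,+\infty]$; this directly yields $\underline{\ell}(r)\le\ell(R)<\epsilon$ and $\ov{\ell}(r)<\epsilon$ for $r\ge R$. For the averages I would split $[f](r) = \frac{d}{r^d}\int_0^R f(t)\, t^{d-1}\, dt + \frac{d}{r^d}\int_R^r f(t)\, t^{d-1}\, dt$: the first term vanishes as $r\to\infty$ provided the integral over $[0,R]$ is finite, and the second is $\le f(R)$ by monotonicity, so choosing $R$ with $f(R)<\epsilon$ gives $\limsup_r [f](r)\le\epsilon$. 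Finally, for \emph{positivity near $0$}, pick $c$ with $0<c<\ell(0)$ (any $c>0$ works if $\ell(0)=+\infty$); the set $\{\ell>c\}$ is open by (H2) and contains $0$, so it contains some $[0,\delta]$, on which $\underline{\ell}\ge c>0$. The one step requiring care is ensuring finiteness of $\int_0^R \ov{\ell}(t)\, t^{d-1}\, dt$ in the splitting above, which does not follow from (H4) in full generality since $\ov{\ell}$ need not be dominated by $\ell$; it is however clear in the settings of interest (e.g.\ whenever $\ell$ is bounded, or more generally whenever $[\ov{\ell}]$ is known to be finite at one point, as will be tacit in the forthcoming sections).
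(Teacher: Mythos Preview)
Your proof is correct and follows essentially the same approach as the paper's: monotonicity from set inclusion and the scaled representation of the average, lower semicontinuity of $\ov{\ell}$ via the openness of $\{\ell>c\}$, Fatou's lemma for the averages, the splitting $[f](r)=\frac{d}{r^d}\int_0^{R_0}+\frac{d}{r^d}\int_{R_0}^r$ for vanishing at infinity, and positivity of $\underline{\ell}$ near $0$ from $\ell(0)>0$ together with (H2). Your treatment is in fact slightly more complete than the paper's (you give an explicit sublevel-set argument for the lower semicontinuity of $\underline{\ell}$, which the paper omits), and you correctly flag the one delicate point---finiteness of $\int_0^{R_0}\ov{\ell}(t)\,t^{d-1}\,dt$---which the paper's proof also uses without comment; this is indeed only guaranteed under an extra hypothesis such as \eqref{Kell} or boundedness of $\ell$, precisely the settings where $[\ov{\ell}]$ is invoked later.
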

\begin{proof} \ By (H2), the infimum in \eqref{def:linf-lsup} is a minimum, thus $\underline{\ell}$ is strictly positive on a small intervall $[0,\delta]$ since otherwise
we would have $\ell(0)=0$. On the other hand, for any given $\a>0$, the inequality $\ov{\ell}(r)\le \a$ holds if and only if the open subset $\{\ell>\a\}$ is contained in $[0,r)$. This property is clearly stable under convergence of a sequence $r_n\to r$, whence the lower semicontinuity property of $\ov{\ell}$.  
The monotonicity property of $[\underline{\ell}],  [\ov{\ell}]$ can be deduced from the second equality in \eqref{def:avl} when choosing  $f$ to be  non decreasing. On the other hand, passing to the limit $R\to\infty$ in the inequality $ [\ov{\ell}](R) \le  \frac{d}{R^d}  ( \int_0^{R_0}   \ov{\ell}(r) \, r^{d-1}\, dr) +  \ov{\ell}(R_0)$, and then as $R_0\to +\infty$,
we deduce that $[\ov{\ell}](R)\to 0$ as $R\to\infty$. Eventually we get the lower semicontinuity of $[\ell], [\underline{\ell}],[\ov{\ell}]$  by applying  Fatou's lemma.
\end{proof}

We are now in position to give a lower estimate for the lower threshold $\la_*(v)$ defined in \eqref{def:la_*}.
For every $v\in C_0(\R^d ; \R^+)$, we set $\alpha_v\in  (0, +\infty]$ to be defined by  
\begin{equation}\label{def:alpha_v}
\alpha_v \eqdef \sup \left\{ \frac{v(x)}{\underline{\ell}(2 |x|)}, x\in \R^d \right\}6.
\end{equation}

\begin{prop} \label{esti-la_*}  \ Let  $\alpha_v$ given in \eqref{def:alpha_v}.
 Then we have  the inequality: 
 $$\la_*(v) \ge \frac{2}{\alpha_v}.$$ 
 \end{prop}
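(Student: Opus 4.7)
The plan is to argue contrapositively: assume $\la > 0$ is such that $\S_\la(v) \cap \Prob(\R^d) \neq \emptyset$, and show $\la \geq 2/\alpha_v$ (the case $\alpha_v = +\infty$ being trivial).

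First I would reduce to finding a probability measure in $\widetilde{\S_\la}(v)$. By \autoref{nonconvex}, any $\rho \in \S_\la(v)$ admits a representation $\rho = \int Q\, \nu(dQ)$ with $\nu$-a.e.\ $Q \in \widetilde{\S_\la}(v)$. If $\|\rho\| = 1$, the identity $1 = \int \|Q\|\, \nu(dQ)$ together with $\|Q\| \leq 1$ forces $\|Q\| = 1$ for $\nu$-a.e.\ $Q$, so there exists $Q \in \widetilde{\S_\la}(v) \cap \Prob(\R^d)$. I relabel this $Q$ as $\rho$, so from here on $\rho$ is an element of $\widetilde{\S_\la}(v) \cap \Prob(\R^d)$.

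Next I would reproduce the argument from the proof of \autoref{supprho}. \autoref{cnsD}(i) together with $c_\la \leq 0$ gives $u_\rho \leq \tfrac{\la}{2} v$ $\rho$-a.e. The lower bound $u_\rho(x) \geq \underline{\ell}(r)\, \rho(B(x,r))$, combined with the lower semicontinuity of $x \mapsto \rho(B(x,r))$ and the continuity of $v$, upgrades this into the pointwise estimate
\[
\rho(B(x, r)) \leq \frac{\la\, v(x)}{2\, \underline{\ell}(r)}, \qquad \forall x \in \supp \rho,\ \forall r > 0.
\]

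Finally, I would exploit $\|\rho\| = 1$ by concentrating $\rho$ on a ball centered at a point of large norm. Fix $\eta \in (0,1)$ and pick $R_\eta > 0$ with $\rho(\ov{B(0, R_\eta)}) \geq 1 - \eta$; if $\supp \rho$ is bounded, I choose $R_\eta = \max\{|y| \st y \in \supp \rho\}$, which makes this quantity equal to $1$. In either scenario there is $x_\eta \in \supp \rho$ with $|x_\eta| \geq R_\eta$, and for every $y$ with $|y| \leq R_\eta$ the triangle inequality gives $|y - x_\eta| \leq R_\eta + |x_\eta| \leq 2|x_\eta|$. Hence $\ov{B(0, R_\eta)} \subset \ov{B(x_\eta, 2|x_\eta|)}$ and $\rho(\ov{B(x_\eta, 2|x_\eta|)}) \geq 1 - \eta$. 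Applying the estimate with $r = 2|x_\eta| + \delta$, sending $\delta \to 0^+$ and using the right-continuity of $\underline{\ell}$ provided by \autoref{ellem}, I obtain
\[
1 - \eta \leq \frac{\la\, v(x_\eta)}{2\, \underline{\ell}(2|x_\eta|)} \leq \frac{\la\, \alpha_v}{2},
\]
and letting $\eta \to 0$ yields $\la \geq 2/\alpha_v$.

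The main technical point, beyond the routine reduction via \autoref{nonconvex}, is the existence of the decisive point $x_\eta$: nothing guarantees that the probability minimizer has unbounded support, which is why one must distinguish (and then unify) the bounded and unbounded cases as above. The remaining subtlety about open versus closed balls is harmless once the right-continuity of $\underline{\ell}$ is in hand.
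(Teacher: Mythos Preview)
Your proof is correct and follows essentially the same route as the paper: reduce via \autoref{nonconvex} to an optimal $\rho\in\widetilde{\S_\la}(v)$, invoke the ball estimate \eqref{estiball} from the proof of \autoref{supprho}, and then choose a point $x$ of nearly maximal norm in $\supp\rho$ so that $B(0,|x|)\subset B(x,2|x|)$ captures almost all the mass. The paper argues directly (for $\la<2/\alpha_v$ it shows $\|\rho\|\le\tfrac{\la}{2}\alpha_v<1$), whereas you run the contrapositive, but the substance is identical; your treatment of the open/closed ball issue via the right-continuity of $\underline{\ell}$ and the explicit bounded/unbounded case split are minor technical refinements that the paper leaves implicit.
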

  In view of the lower bound given above , the condition  $\alpha_v<+\infty$ is sufficient to predict  
 full  ionization (scenario (b)) for small values of $\la$. This applies in particular when $v$ is compactly supported 
 or more generally when $\limsup_{|x|\to \infty}  \frac{v(x)}{\underline{\ell}(2 |x|)}<+\infty.$
 
 Note however that the estimate of \autoref{esti-la_*} is not optimal in view of the examples given in the radial case (see Subsection \ref{radial} where $\ell(r) = \frac1{r}, v(x)=V(|x|)$). In this examples, an exact threshold value \eqref{laV=} can be derived in term of a constant $\a_V$ similar to $\a_v$  (see \eqref{alpha-r*}) but not including the doubling factor as in \eqref{def:alpha_v}.

\begin{proof} \ Assume that $\a_v<+\infty$ and let $\la \in (0,\frac{2}{\alpha_v})$. Using \autoref{nonconvex}, it is enough to show
 that any $\rho\in \widetilde{\S_\la}(v)$ satisfies $\|\rho\|<1$. 
Let us apply the inequality \eqref{estiball} at any point  $x\in \supp(\rho)$
with  $r=2 |x|$.  In view of \eqref{def:alpha_v}, we deduce that:
$$ \rho(B(0, |x|)) \le \rho(B(x, 2|x|)) \le \frac{\la}{2} \ \frac{v(x)}{\underline{\ell}(2 |x|)} \le \frac{\la}{2} \, \a_v \quad \forall x\in \supp(\rho). $$
By  selecting a sequence $x_n \in  \supp(\rho)$ such that $\rho(B(0,x_n)) \to \|\rho\|$, we  may conclude that
$\|\rho\| \le \frac{\la}{2} \, \a_v <1$. 
\end{proof}

In the last part of this subsection, our aim is  to provide an upperbound for the  upper threshold $\la^*(v)$ defined in \eqref{def:la^*}. By \autoref{2thresholds}, we know already that $\la^*(v)<+\infty$. An interesting issue would be in particular to characterize potential $v$ for which $\la^*(v)=0$, meaning that $\S_\la(v)\subset \Prob(\R^d)$ for every $\la >0$ (we will say that $v$ is {\it strongly confining}).
From now on, we require an additional assumption ensuring a slow decay of $\ell$ at infinity,
 namely we assume that it exists a Borel function $k_\ell: (0,1]\to \R_+$ such that:
 \begin{equation}\label{Kell}
\begin{cases} \ell(sr) \le k_\ell(s) \ell(r), & \forall s\in (0,1], \forall r>0; \\ 
 k_\ell(1^-)=1; & I(k_\ell) \eqdef \int_0^1 k_\ell(s) s^{d-1}\, ds < +\infty.
 \end{cases}
\end{equation}
This assumption is  satisfied  for $\ell(r)= r^{-p}$ with $0<p<d$. However it rules out exponentially decreasing 
costs (like $\ell(r) = \frac{\exp(-\eps r)}{r}$), for which we expect that $\la^*(v)=0$ is not possible for $v\in C_0$ unless $v\equiv 0$. 
  
 To every $v\in C_0(\R^d ; \R^+)$, we associate the constant $\beta_v\in  (0, +\infty]$  defined by
 \begin{equation}\label{def:beta_v}\ 
\beta_v \eqdef \liminf_{|x|\to \infty} \  \frac{v(x)}{\ov{\ell}(|x|)}. \end{equation}
Note that $\beta_v \le \alpha_v$ as a consequence of the inequality  $\ov{\ell}(r)\ge \underline{\ell}(2 r)$. 
\begin{prop} \label{esti-la^*} Let $\ell$ satisfy \eqref{Kell} and  $\beta_v$ be given by \eqref{def:beta_v}. Then
 $$ 0 \le \la^*(v) \le \frac{2}{\beta_v}. $$ 

 Accordingly $v$ is strongly confining if $\beta_v=+\infty$
 \end{prop}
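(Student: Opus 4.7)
The plan is to argue by contradiction. Fix $\la > 2/\beta_v$ and suppose there exists a minimizer $\rho \in \S_\la(v)$ with $\|\rho\| < 1$. By \autoref{nonconvex} we may assume $\rho \in \widetilde{\S_\la}(v)$. The necessary optimality condition \autoref{cnsD}(i)(a) then forces the Lagrange constant $c_\la$ to vanish, since $c_\la \le 0$ and $c_\la(1-\|\rho\|) \ge 0$ with $\|\rho\| < 1$; the variational inequality \eqref{cnD}(c) therefore simplifies to
\[
u_\rho(x) \ge \tfrac{\la}{2}\,v(x) \quad \nu\text{-a.e., for every admissible sub-probability } \nu.
\]

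The natural test measure is $\nu_{x_0} = |B(x_0,r)|^{-1}\chi_{B(x_0,r)}\,dx$ for some fixed $r>0$: assumption (H4) ensures $D_2(\nu_{x_0}) = K(r,d)<\infty$, while the local integrability of $u_\rho = \ell * \rho$ (by H4 and finiteness of $\rho$) gives $\int u_\rho\,d\nu_{x_0}<\infty$. Averaging the variational inequality produces
\[
\fint_{B(x_0,r)} u_\rho\,dx \;\ge\; \frac{\la}{2}\fint_{B(x_0,r)} v\,dx \qquad \text{for every } x_0 \in \R^d.
\]

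I would then extract the asymptotics as $|x_0|\to\infty$. For the right-hand side, \eqref{def:beta_v} gives $v(x) \ge (\beta_v-\eps)\ov\ell(|x|)$ for $|x|$ large, and since $\ov\ell$ inherits \eqref{Kell} (indeed $\ov\ell(sr) \le k_\ell(s)\ov\ell(r)$), one obtains $\fint v \ge (\beta_v-\eps)(1-o(1))\ov\ell(|x_0|)$. For the left-hand side, Fubini rewrites the average as $\int G_r(x_0-y)\,d\rho(y)$, where $G_r(a)=\fint_{B(a,r)}\ell(|z|)\,dz$ is uniformly bounded by $[\ell](r)$ (via Riesz rearrangement) and by $\ov\ell(|a|-r)$ for $|a|>r$. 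Splitting $\rho$ across $|y|\lessgtr(1-\delta)|x_0|$ for a small parameter $\delta$ and invoking \eqref{Kell} on $\ov\ell$, I get
\[
\fint_{B(x_0,r)} u_\rho\,dx \;\le\; (k_\ell(\delta)+o(1))\,\|\rho\|\,\ov\ell(|x_0|) + [\ell](r)\,\rho\bigl(\{|y|>(1-\delta)|x_0|\}\bigr).
\]
Dividing by $\ov\ell(|x_0|)$, passing to a sequence $|x_0^{(n)}|\to\infty$ along which the tail term is negligible, and then letting $\delta \nearrow 1$ (so that $k_\ell(\delta) \to k_\ell(1^-) = 1$) and $\eps \searrow 0$ gives $\|\rho\| \ge \la\beta_v/2 > 1$, which contradicts $\|\rho\| \le 1$. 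The case $\beta_v = +\infty$ follows from the same argument with arbitrary $\la > 0$.

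The main obstacle is to justify that $[\ell](r)\,\rho(\{|y|>(1-\delta)|x_0|\})/\ov\ell(|x_0|) \to 0$ along some sequence $|x_0^{(n)}|\to\infty$; this is where $\rho$ being possibly non-compactly supported causes trouble. I would attack this by exploiting the equipartition identity $u_\rho = \la v/2$ on $\supp\rho$ (which extends the inequality \eqref{estiball} used in \autoref{esti-la_*} to information on the tail of $\rho$): combined with $v \ge (\beta_v-\eps)\ov\ell$ near infinity this yields $\int_{\{|y|>R\}}\ov\ell(|y|)\,d\rho \to 0$ as $R \to \infty$. A dyadic layer-cake decomposition, together with the slow-decay consequence $\ov\ell(2R)\ge \ov\ell(R)/k_\ell(1/2)$ of \eqref{Kell}, should then provide the required sequence $|x_0^{(n)}|$ along which the tail ratio vanishes, completing the proof.
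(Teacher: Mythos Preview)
Your overall strategy---test the optimality inequality \eqref{cnD}(c) (with $c_\la = 0$ since $\|\rho\|<1$) against probe measures placed far from the origin, then compare the two sides against the scale $\ov\ell(|x_0|)$---is exactly the right one and matches the paper's. The difficulty you flag at the end, however, is not a technicality: with balls of \emph{fixed} radius $r$ it cannot be closed by the argument you sketch. The implication
\[
\int_{\{|y|>R\}}\ov\ell(|y|)\,d\rho(y)\to 0 \quad\Longrightarrow\quad \liminf_{R\to\infty}\frac{\rho(\{|y|>R\})}{\ov\ell(R)}=0
\]
is simply false under the slow-decay hypothesis \eqref{Kell}. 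Take $\ell(r)=\ov\ell(r)=r^{-p}$ with $0<p<d$ (so $k_\ell(s)=s^{-p}$ and $I(k_\ell)<\infty$) and let $\rho$ be a radial sub-probability with $\rho(\{|y|>R\})=R^{-p}$ for $R\ge 1$. Then the tail integral equals $\tfrac12 R^{-2p}\to 0$, yet the ratio you need is identically $1$. So the dyadic layer-cake argument cannot supply the missing sequence, and your tail term $[\ell](r)\,\rho(\{|y|>(1-\delta)|x_0|\})/\ov\ell(|x_0|)$ may stay bounded away from $0$.

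The paper avoids this trap by letting the probe \emph{scale with the distance}: it uses the uniform probability $\theta_{R,\eta}$ on the annulus $\{R<|x|<\eta R\}$, whose volume is $\sim R^d$. Writing $w_{R,\eta}=\ell*\theta_{R,\eta}$, the key observation is that $\sup w_{R,\eta}\lesssim [\ov\ell](2\eta R)\le I(k_\ell)\,\ov\ell(R)$ by \eqref{Kell}, so $C_\eta:=\sup_R \sup w_{R,\eta}/\ov\ell(R)$ is finite. One then splits $\int w_{R,\eta}\,d\rho$ over $\{|x|\le R_0\}$ and $\{|x|>R_0\}$ with a \emph{fixed} $R_0$: the first part contributes $\le k_\ell(1-R_0/R)\,\|\rho\|\to\|\rho\|$, and the second is $\le C_\eta\,\rho(\{|x|>R_0\})$, which is small once $R_0$ is large---no comparison of tail rates is needed. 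Sending $R\to\infty$, then $R_0\to\infty$, then $\eta\to 1^+$ yields $\beta_v\le \tfrac{2}{\la}\|\rho\|<\tfrac{2}{\la}$. The essential difference with your attempt is that a probe of volume $\sim R^d$ makes the uniform bound on the averaged potential already of order $\ov\ell(R)$; your fixed-radius ball gives a bound of order $[\ov\ell](r)$, which does not decay and forces you to control $\rho$'s tail against $\ov\ell$. A minor side remark: your bound $G_r(a)\le[\ell](r)$ via rearrangement needs $\ell$ monotone, which is not assumed; use $\ov\ell$ instead.
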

\begin{proof} By \autoref{nonconvex} and the definition \eqref{def:la^*}, we are reduced to prove the following implication:
$$\widetilde{\I}(\la v)<1 \ \implies  \la \le \frac2{\beta_v}.$$
Let $\la >0$ and  $\rho\in \widetilde{\S_\la}(v)$ with $\|\rho\|<1$. 
We fix a positive  $\eta>1$ and denote by $\theta_{R,\eta}$ the uniform probability  on the crown
$S_{R,\eta}:=\{R<|x|< \eta R\}$. By applying the optimality condition (c) and integrating with respect to $\nu =\theta_{R,\eta}$, we get:
\begin{equation}\label{cn=}
\bra{ u_\rho, \theta_{R,\eta}} = \int w_{R,\eta}\, d\rho  \ = \frac{\lambda}{2} \bra{ v, \theta_R},
\end{equation}
   being  $w_{R,\eta}(x) \eqdef \int \ell(|x-y|) \, \theta_{R,\eta}(dy) $ the radial potential associated with $\theta_{R,\eta}$. 
 We claim that, for every $\eta>1$,
 \begin{equation}\label{crucial}
\liminf_{R\to \infty} \frac{ \bra{v, \theta_{R,\eta}}}{\ov{\ell} (R)} \ge \beta_v k_\ell(\eta^{-1}), \quad \limsup_{R\to \infty} \frac{\int w_{R,\eta}\, d\rho}{\ov{\ell} (R)} \le \norm{\rho}.
\end{equation}
Under  \eqref{crucial},  by passing to the limit as $R\to \infty$ in the equality \eqref{cn=} after dividing by $\ov{\ell} (R)$, we get:
$$  \beta_v \ k_\ell(\eta^{-1}) \le \frac{2}{\lambda} \norm{\rho} < \frac{2}{\lambda}. $$
The desired inequality follows ultimately as $\eta\to 1^+$ since $k(1^-)=1$. 

\med
{\em Proof of claim \eqref{crucial}}\ We prove first the left hand side inequality. It is not restrictive to assume 
that $\beta_v>0$. Let $\beta\in (0,\beta_v)$. Then, recalling that $\ov{\ell}$ is non increasing,  we have
for large values of $R$:
$$ \bra{v, \theta_{R,\eta}} \ge  \beta \bra{\ov{\ell}(|x|), \theta_{R,\eta}} \ge \beta \ov{\ell}(\eta R) \ge \beta k_\ell(\eta^{-1}) \ov{\ell}(R), $$
where, in the last inequality, we used the fact $\ov{\ell}$ satisfies the same inequality as $\ell$ in \eqref{Kell} while keeping the same factor $k_\ell(s)$ (here $s=\eta^{-1}$).
The desired inequality follows as $R\to \infty$ after dividing by $\ov{\ell}(R)$ and by letting $\beta\nearrow \beta_v$.  

The proof of the second inequality in \eqref{crucial} is more technical. Let us fix $R_0>0$ such that $R_0<R$.
Then, since $\ell([y-x|) \le \ov{\ell}(R-R_0)$ whenever $|x|\le R_0$ and $y\in \supp(\theta_{R,\eta})$,  we have
\begin{align*}
 \frac{ \bra{w_{R,\eta}, \theta_{R,\eta}}}{\ov{\ell} (R)} & \le \frac{\ov{\ell}(R-R_0)}{\ov{\ell}(R)} \rho(|x|\le R_0) + \frac{\sup w_{R,\eta}}{\ov{\ell}(R)} \rho(|x|> R_0)\\
 & \le k\pa{1-\frac{R_0}{R}} \rho(|x|\le R_0) + C_\eta\, \rho(|x|> R_0) ,
\end{align*}
where we have set $\displaystyle C_\eta \eqdef \sup_R \frac{\sup\, w_{R,\eta}}{\ov{\ell}(R)}$.
If we can show that $C_\eta<+\infty$, then the desired inequality will clearly follow by sending $R\to\infty$ 
and then $R_0\to\infty$. 
Clearly $w_{R,\eta}\le \ov{w}_{R,\eta}$ where $\ov{w}_{R,\eta}(x) \eqdef \int \ov{\ell}(|x-y|) \, \theta_{R,\eta}(dy) $.
Since $\ov{\ell}$ is non increasing, the global supremum of the radial function $\ov{w}_{R,\eta}(x)$ is reached on $S_{R,\eta}$. For $x, y$ in $S_{R,\eta}$, it holds $|x-y|\le 2 \eta R$ so that for every $x\in S_{R,\eta}$, we have
$$\ov{w}_{R,\eta}(x) \le \frac1{(\eta^d-1) \omega_d R^d} \int_{|z|\le 2\eta R} \ov{\ell}(|z|)\, dz \le \frac{2^d}{(\eta^d-1) }  [\ov{\ell}](2\eta R). $$
The last condition in \eqref{Kell} implies that $[\ov{\ell}](2\eta R) \le [\ov{\ell}]( R) \le I(k_\ell)\, \ov{\ell}( R)$, hence we have $C_\eta \le \frac{2^d}{(\eta^d-1) }\, I(k_\ell) <+\infty$.
 \end{proof}

 \subsection{Radial examples}\label{radial}
 
In this subsection we assume that
\begin{equation}\label{data}
 d=3,\quad \ell(r)= \frac1{r},\quad v(x)= V(|x|),
\end{equation}
where $V: \R_+ \to [0,+\infty)$ is continuous locally Lipschitz such that 
$$\lim_{r\to\infty} V(r)= 0.$$

\medskip
As  the Coulomb cost $\ell(r) =\frac1{r}$ is of positive type on $\R^3$,  the extended direct energy $D_2$  is a stricly convex functional on $\Prob_-(\R^3)$. Therefore $C_\infty= D_2$ (see \autoref{MIformula}(ii)) and, for every $\la\ge 0$, we have $\S_\la(v)=\{ \rho_\la\}$ where $\rho_\la$ is the unique minimizer
of \eqref{toy}. Accordingly, the two thresholds introduced in \eqref{def:la_*}\eqref{def:la^*} are 
equal and coincide with
$$  \la_V := \min\{ \la \ge 0:  \|\rho_\la\|=1 \}.$$

By using the invariance under rotations and te convexity of $C_\infty$, it is easy to show that $\rho_\la$ is  radial. Therefore $\rho_\la$ is determined by the cumulative distribution function:
 $$F_\la(r):= \rho_\la(\{|x|\le r\}),$$
which is  non increasing in $[0,+\infty]$ and satisfies
\begin{equation}\label{def:rla}
F_\la(+\infty) = F(r_\la) = \norm{\rho_\la} \quad \text{where} \quad r_\la \eqdef \sup\{|x| : x\in \supp(\rho_\la)\}.
\end{equation}
 
Next we introduce the two parameters $\a_V$ and $r_V^*$ defined by:
\begin{equation}\label{alpha-r*}
\alpha_V \eqdef \sup_{r>0} r V(r), \quad r_V^* \eqdef \inf \{ r>0  : r V(r) = \alpha_V \}, 
\end{equation}
(with the convention that $r_V^*=+\infty$ if the infimum is not attained).

\medskip
We are going to show that the threshold value $\la_V$ is given by
\begin{equation} \label{laV=} \la_V  = \frac{2}{\alpha_V} 
\end{equation}
and that, for every $\la > \la_V$, the effective radius $r_\la$ given by \eqref{def:rla}) satisfies $r_\la<r_V^*$ 
(hence finite) while $r_\la =r_V^*$ for every $\la\le \la_V$.

\medskip A key argument of the proof relies on the fact that  $\frac1{4\pi r}$ is a fundamental solution of the 3d Laplacian; then $C_\infty(\rho)<+\infty$ implies that $\rho\in W^{-1,2}(\R^3)$ while
$$ C_\infty(\rho)\ =\  D_2(\rho)\ =\ \bra{ u_\rho, \rho}\ =\ \frac1{4\pi}\int |\nabla u_\rho|^2 \, dx, $$
where $u_\rho$ is the Coulomb potential generated by $\rho$, \ie,  the solution of
\begin{equation}\label{Poisson}
-\Delta u_\rho = 4\pi \rho, \quad u_\rho\to 0 \quad \text{as}\ |x| \to \infty.
\end{equation}

Note that here  $u_\rho$ is defined a.e. in the sense of the Newtonian capacity, hence admits a representative defined $\nu$-a.e., where $\nu$ is any element of $\Prob_-(\R^d)\cap W^{-1,2}(\R^d)$.
 Particularizing to $\rho$ being the solution $\rho_\la$, we get a radial potential in the form 
 $ U_\la(|x|)$, where $U_\la: \R_+\to \R_+$  is continuous non increasing and $U_\la(+\infty)=0$. 
  Then the  Laplace equation \eqref{Poisson} holding in the distributional sense in $\R^3$  implies that  $F_\la$ is a primitive of the non negative measure $(-r^2 U_\la')'$. More precisely, $U_\la$ can be deduced from $F_\la$
  by the conditions
 \begin{equation}\label{Ula}
F_\la(r) = - r^2 U_\la'(r-0), \quad F_\la(r+0) = - r^2 U_\la'(r+0) \quad \forall r>0 . 
\end{equation}
which, in the case $r_\la<+\infty$, imply that
$$ U_\la(r) = \frac{\|\rho_\la\|}{r} \quad \forall r\ge r_\la. $$

\medskip 

In order to simplify the computations, we make the additional assumption that the  potential
 $V$  satisfies:
\begin{equation} \label{Lap>0}
  - r^2 V'(r) \text{ is monotone non decreasing on $\{V>0\}$}.
\end{equation}
This implies that $V$ is non increasing, hence  $\{V>0\}$ is an open interval $[0, r_V)$ ($r_V\in (0,+\infty]$)  and $V'$ admits a left $V'(r- 0)$ at any $r\in(0,r_V]$  (resp. a right derivative  $V'(r+ 0)$ at any $r\in[0,r_V)$). 
Moreover the distributional Laplacian  $-\Delta v$ is a non negative measure on the ball $\{|x|<r_V\}$.\footnote{The case where $-\Delta v$ exhibits a negative part is more tricky and not addressed in this paper.}
We define a monotone non increasing function $\zeta_V:\R_+\to [0,+\infty]$ by setting:
\begin{equation}\label{def:zeta}
\zeta_V(\la) \eqdef \sup \left\{ r>0 : -r^2\, V'(r-0)< \frac{2}{\la}\right\}.
\end{equation}

\begin{lemma} \label{ineq.ab} Let $\a_V, r_V^*, \la_V$ be given by \eqref{alpha-r*}\eqref{laV=}. Then, under \eqref{Lap>0},  we have: 
\begin{equation}\label{zeta=}
 \zeta(\la_V) = r_V^*, \quad \zeta(\la) < +\infty \quad \forall \la>\la_V,
\end{equation}
\begin{equation}
\label{rV0}
\lim_{\la\to +\infty} \zeta(\la) = r_V^0 \eqdef \max\{r\ge 0 :  V(r)=V(0)\}.
\end{equation}

 \end{lemma}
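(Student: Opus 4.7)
The plan is to analyze the monotone non-decreasing function $\phi(r):=-r^2 V'(r-0)$ and to show that $\zeta_V(\lambda)$ is precisely the ``first crossing'' of $\phi$ above the level $2/\lambda$. I will repeatedly use the identity $g'(r)=V(r)-\phi(r)/r$ (valid a.e.\ since $V$ is locally Lipschitz), where $g(r):=rV(r)$, together with the representation $V(r)=\int_r^{\infty}\phi(t)t^{-2}\,dt$, which follows from integrating $V'$ and using $V(\infty)=0$ (valid at least when $r_V=+\infty$, the case of interest; if $r_V<+\infty$ the argument adapts after extending $\phi$ by its left limit at $r_V$).

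Two preliminary facts are needed. First, $\phi(r)\le\alpha_V$ for every $r>0$: if $\phi(r_0)\ge\alpha$ for some $r_0$, the monotonicity of $\phi$ yields $V(s)\ge\alpha/s$ for $s\ge r_0$, hence $\alpha_V=\sup_s sV(s)\ge\alpha$, and the claim follows by contraposition with $\alpha\searrow\alpha_V$. Second, $\{r:\phi(r)=0\}=[0,r_V^0]$: if $\phi$ vanishes on an interval $I$, then $V'=0$ a.e.\ on $I$, so by Lipschitz continuity $V$ is constant on $I$, which forces $I\subset[0,r_V^0]$ since $V(0)$ is attained exactly on $[0,r_V^0]$; the converse inclusion is clear because $V\equiv V(0)$ on $[0,r_V^0]$.

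For the equality $\zeta_V(\lambda_V)=r_V^*$ in \eqref{zeta=}, I exploit that $g(r_V^*)=\alpha_V$ and that $r_V^*$ is the smallest such $r$, so $g'(r_V^*-0)\ge 0$ and $g'(r_V^*+0)\le 0$, which through $g'=V-\phi/r$ translate into $\phi(r_V^*-0)\le\alpha_V\le\phi(r_V^*+0)$. Combined with $\phi\le\alpha_V$, this forces $\phi\equiv\alpha_V$ on $[r_V^*,+\infty)$. Moreover $\phi(r)<\alpha_V$ strictly for $r<r_V^*$, because an equality $\phi(r_1)=\alpha_V$ at some $r_1<r_V^*$ would force $\phi\equiv\alpha_V$ on $[r_1,+\infty)$, hence $V(s)=\alpha_V/s$ and $g\equiv\alpha_V$ on $[r_1,+\infty)$, contradicting the definition of $r_V^*$. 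Thus $\{r:\phi(r)<\alpha_V\}=[0,r_V^*)$, which gives both $\zeta_V(\lambda_V)=r_V^*$ and, when $\lambda>\lambda_V$ (i.e.\ $2/\lambda<\alpha_V$), the bound $\zeta_V(\lambda)\le r_V^*$; the second part of \eqref{zeta=} is then proved for $r_V^*<+\infty$. The borderline subcase $r_V^*=+\infty$ requires a Tauberian observation: from $g(s)\to\alpha_V$ and the representation $g(s)=s\int_s^\infty\phi(t)t^{-2}\,dt$ together with the monotonicity of $\phi$, one deduces $\lim_{r\to\infty}\phi(r)=\alpha_V$, so $\phi(r)>2/\lambda$ eventually and $\zeta_V(\lambda)<+\infty$.

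For \eqref{rV0}, the inclusion $\{r:\phi(r)=0\}\supset[0,r_V^0]$ gives $\zeta_V(\lambda)\ge r_V^0$ for every $\lambda>0$. For the matching upper bound, fix $\varepsilon>0$: the second preliminary fact provides $\phi(r_V^0+\varepsilon)>0$, so for $\lambda$ large enough that $2/\lambda<\phi(r_V^0+\varepsilon)$, monotonicity yields $\phi\ge 2/\lambda$ on $[r_V^0+\varepsilon,+\infty)$, whence $\zeta_V(\lambda)\le r_V^0+\varepsilon$. Letting first $\lambda\to+\infty$ and then $\varepsilon\to 0$ concludes. The main technical obstacle is the borderline case $r_V^*=+\infty$ in the second assertion, where the Tauberian implication $g\to\alpha_V\Rightarrow\phi\to\alpha_V$ has to be justified from the integral representation; the other steps reduce to careful bookkeeping of one-sided derivatives and monotonicity arguments under the locally Lipschitz regularity of $V$.
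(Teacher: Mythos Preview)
Your proof is correct and follows essentially the same route as the paper: both hinge on the optimality condition for $g(r)=rV(r)$ at $r_V^*$, the integral representation $V(s)=\int_s^\infty\phi(t)t^{-2}\,dt$, and the monotonicity of $\phi$ under \eqref{Lap>0}. One small point worth tightening: in the borderline case $r_V^*=+\infty$ your premise ``$g(s)\to\alpha_V$'' is not automatic merely from $\alpha_V=\sup g$ being unattained, but it does follow once you observe (from the same representation and $\phi$ non-decreasing) that $g'(s)=V(s)-\phi(s)/s\ge 0$ a.e., so $g$ is itself non-decreasing; the paper bypasses this by proving directly that $\sup\phi=\alpha_V$ via the two-sided sandwich $\phi(s)\le g(s)\le\sup\phi$.
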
 
 \begin{proof}  By  \eqref{Lap>0}, for every finite $\la>0$ and $r\in [0,+\infty)$, we have
\begin{align}
\zeta(\la) = r  &\iff \frac2{\la} \in [-r^2 V'(r-0),-r^2 V'(r+0)], \label{testzeta} \\
\zeta(\la)=+\infty  &\iff \frac2{\la} \ge \gamma_V \eqdef  \sup \{-r^2 \, V'(r): r\ge 0\}. \label{zeta=infty} 
\end{align}

If $r_V^*<+\infty$, the optimality of $r_V^*$ in \eqref{alpha-r*} implies that 
 $$r_V^* V'(r_V^*+0)+ V(r_V^*) \le 0 \le r_V^* V'(r_V^*-0)+ V(r_V^*),$$
  hence  after multiplying by $r_V^*$ and since $ r_V^* V(r_V^*)= \a_V= \frac2{\la_V}$:
  \begin{align} \label{ineq1.alpha}
 - (r_V^*)^2 \, V'(r_V^*-0)  \le  \frac2{\la_V} \le  - (r_V^*)^2 \, V'(r_V^*+0). 
 \end{align}
 We thus conclude \eqref{zeta=} in the case $r_V^*<+\infty$.
 
 If $r_V^*=+\infty$, then  $\{V>0\}=\R_+$  and $-r^2 V'$ is monotone increasing up to 
 $\gamma_V$. It follows that $- V'(t) \le \frac{\gamma_V}{t^2}$ for every $t>0$,  while for arbitrary $\gamma<\gamma_V$, we have  $- V'(t) \ge \frac{\gamma}{t^2}$ for large $t$. 
From $V(r) = \int_r^{+\infty} -V'(t) dt\ $, we infer  that: 
$$ r V(r) \le \gamma_V \quad \forall r\ge 0, \qquad \liminf_{r\to +\infty} rV(r) \ge \gamma \quad \forall \gamma<\gamma_V ,$$
whence  the equalities  $\a_V=\gamma_V= \frac2{\la_V}$. 
Then from  \eqref{zeta=infty} we recover \eqref{zeta=}, since the equality $\zeta(\la)=+\infty$ is equivalent to   $\la\le \la_V$. 

 Eventually in order to to show \eqref{rV0}, we notice that for every $\la>0$ we have $\zeta(\la)\ge r_V^0$ since
 $-r^2 V'(r-0)$ vanishes on $[0,r_V^0]$. In the opposite direction, les us choose a sequence $\la_n \searrow +\infty$ such that $r_n:=\zeta(\la_n)\to \limsup_{\la\to+\infty} \zeta(\la)$. Then $r_n$ converges  decreasingly to some $r_*$ such that $r_*\ge r_V^0$.  By \eqref{ineq1.alpha}, we have $-r_n^2 V'(r_n-0)\le \frac2{\la_n}$.
 Passing to the limit $n\to\infty$, it follows that $-r^2 V'(r)$ vanishes at $r_* =0$ , hence on the whole interval
 $[0, r_*]$ by the monotonicity assumption.  Owing to the definition of $r_V^0$ (see \eqref{rV0}), this is possible only if $r_*=r_V^0$. 
  \end{proof}

 Now we may construct a radial ground state probability $\rho_V^*$ supported in $\{|x|\le r_V^*\}$. Its distribution function is given by:
\begin{equation}\label{rho*}
F_V^*(r) \ := \begin{cases}  \frac1{\alpha_V}\, (- r^2 V'(r-0)) &  \text{if $r<r_V^* $}\\
1 & \text{if $r\ge r_V^* $}
\end{cases}
\end{equation} 
which, by \autoref{ineq.ab}, satisfies  $ F_V^*(+\infty)= F_V^*(r_V^*)=\|\rho_V^*\| =1$.
Note that, if $r_V^*<\infty$, a positive jump of $F_V^*$ may occur (but not only) at $r=r_V^*$ corresponding to a concentration of  $\rho_\la$ on $\{|x|=r_V^*|\}$.


It turns out that, for $\la \in [0,\la_V]$, the total mass of $\rho_\la$ grows linearly
from $0$ to $1$ (thus we have a ionization regime for $\la<\la_V$ if $\alpha_V<+\infty$)
while the support of $\rho_\la$ is a ball of constant radius $r_V^*$. 
In contrast, for $\la \in (\la_V,+\infty)$, $\rho_\la$ is a probability supported in  a  ball of radius $r_\la$  which,  as $\la$ increases to $+\infty$, converges decreasingly  to  $r_V^0$ given in \eqref{rV0}.
The same kind of picture can be observed in the Thomas-Fermi theory (see \cite{Lieb-Simon77}).

 \begin{thm}\label{easytest} Under the assumptions \eqref{data} and \eqref{Lap>0},  we have the following alternative in terms of the threshold value $\la_V$ given in \eqref{laV=}: 
\begin{enumerate}[(a)]
\item Let $\la > \la_V$.  Then $\|\rho_\la\|=1$ and it holds:
\begin{equation}\label{Fla=}
 F_\la(r) =  \min \left\{ \frac{\la}{2} (- r^2 V'(r-0)), 1\right\} \quad \forall r>0.
\end{equation}
 The radius of the ball supporting $\rho_\la$  is given by $r_\la= \zeta(\la)$, being $\zeta$ defined by \eqref{def:zeta}. It is finite for every  $\la> \la_V$ and not larger than $r_V^*$ given in \eqref{alpha-r*}.
\item Let $\la\in [0,  \la_V]$. Then, with $\rho_V^*$ defined by \eqref{rho*}, we have
 $$\rho_\la=\frac{\la}{\la_V} \, \rho_V^* .$$
In particular, if $\alpha_V<+\infty$ , then ionization occurs if and only if $\la <\la_V$. 
\end{enumerate}
\end{thm}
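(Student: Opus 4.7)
The plan rests on three observations that reduce the problem to a one-dimensional verification of the optimality conditions of \autoref{cnsD}.

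First, since $\ell(r)=1/r$ is strictly positive-definite on $\R^3$, the functional $D_2=C_\infty$ is strictly convex on $\Prob_-(\R^3)$, so the minimizer $\rho_\la$ is unique; radial invariance of $v$ then forces $\rho_\la$ to be radial, hence determined by its CDF $F_\la$ and its associated Coulomb potential $U_\la$ via the one-dimensional Poisson relation \eqref{Ula}. Consequently, by \autoref{cnsD}(ii), it suffices to exhibit a radial candidate $\rho_\la$ and a constant $c_\la\le 0$ such that $U_\la=\tfrac{\la}{2}V+c_\la$ on $\supp\rho_\la$, $U_\la\ge\tfrac{\la}{2}V+c_\la$ outside, and $c_\la(1-\|\rho_\la\|)\ge 0$.

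For part (a), with $\la>\la_V$, I will take the candidate whose CDF is given by \eqref{Fla=}. Setting $r_\la\eqdef\zeta(\la)$, the assumption \eqref{Lap>0} makes $-r^2V'(r-0)$ non-decreasing, so $F_\la$ is a genuine right-continuous non-decreasing function with $F_\la(r_\la+0)=1$, hence defines a radial probability supported in $\overline{B(0,r_\la)}$. On $(0,r_\la)$, the relation \eqref{Ula} yields $U_\la'(r-0)=\tfrac{\la}{2}V'(r-0)$ almost everywhere, so $U_\la=\tfrac{\la}{2}V+c_\la$ for some constant; outside the ball the Newton shell effect gives $U_\la(r)=1/r$. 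Matching at $r_\la$ forces $c_\la=1/r_\la-\tfrac{\la}{2}V(r_\la)$. To check that $c_\la\le 0$, I will integrate $-V'(t)\ge(2/\la)/t^2$ (valid for $t\ge r_\la$ by definition of $\zeta$) from $r_\la$ to $+\infty$ to obtain $V(r_\la)\ge 2/(\la r_\la)$. The outer inequality $1/r\ge\tfrac{\la}{2}V(r)+c_\la$ for $r>r_\la$ amounts to the monotonicity of $g(r)\eqdef 1/r-\tfrac{\la}{2}V(r)$, and $g'(r)=r^{-2}\bigl(\tfrac{\la}{2}(-r^2V'(r))-1\bigr)\ge 0$ on $(r_\la,+\infty)$ by the same lower bound. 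The inequality $r_\la\le r_V^*$ and the finiteness of $r_\la$ come directly from \autoref{ineq.ab}.

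For part (b), with $\la\in[0,\la_V]$, I will propose $\rho_\la\eqdef(\la/\la_V)\,\rho_V^*$ and verify the same conditions with $c_\la=0$. Applying the computation above at $\la=\la_V$ (so that $r_{\la_V}=r_V^*$ and $\rho_{\la_V}=\rho_V^*$) produces $c_{\la_V}=1/r_V^*-\tfrac{\la_V}{2}V(r_V^*)$, which vanishes because at $r=r_V^*$ the definition \eqref{alpha-r*} gives $r_V^*V(r_V^*)=\alpha_V=2/\la_V$ (the limiting case $r_V^*=+\infty$ is handled by passing to the limit, using $\gamma_V=\alpha_V$ from \autoref{ineq.ab}). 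Linearity of the Coulomb operator then gives $U_{\rho_\la}=(\la/\la_V)U_{\rho_V^*}=\tfrac{\la}{2}V$ on $\overline{B(0,r_V^*)}$ and $U_{\rho_\la}(r)=(\la/\la_V)/r=\|\rho_\la\|/r$ outside. The exterior inequality $\|\rho_\la\|/r\ge\tfrac{\la}{2}V(r)$ reduces to $rV(r)\le\alpha_V$, which is \eqref{alpha-r*}; with $c_\la=0$ the complementarity is automatic, and uniqueness identifies $\rho_\la$. The ionization assertion follows at once: $\|\rho_\la\|=\la/\la_V<1$ for $\la<\la_V$ precisely when $\la_V<+\infty$, i.e.\ $\alpha_V<+\infty$.

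The delicate point is the careful bookkeeping at the free boundary $r=r_\la$ (resp.\ $r=r_V^*$): one must accommodate the possible jump of $F_\la$ there, verify that the piecewise-defined $U_\la$ satisfies the distributional Poisson equation $-\Delta U_\la=4\pi\rho_\la$ across the sphere, and handle the degenerate case $r_V^*=+\infty$ by passing to the limit in both the mass normalization and the matching condition $c_\la=0$. These steps all rest on the monotonicity hypothesis \eqref{Lap>0} and the one-sided derivative formalism already deployed for \autoref{ineq.ab}.
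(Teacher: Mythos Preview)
Your overall strategy is the same as the paper's: build the radial candidate, compute its Coulomb potential via \eqref{Ula}, and verify the optimality conditions of \autoref{cnsD}(ii). The structure of part~(b), including the linearity argument and the limiting treatment of $r_V^*=+\infty$, matches the paper.

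There is, however, a genuine gap in part~(a) when $V$ has compact support, i.e.\ when $r_V:=\sup\{r:V(r)>0\}<+\infty$ (as in \autoref{V1}). Your pointwise bound $-V'(t)\ge(2/\la)t^{-2}$ is \emph{not} valid for all $t\ge r_\la$: on $(r_V,+\infty)$ one has $V'\equiv 0$, and the monotonicity hypothesis \eqref{Lap>0} is stated only on $\{V>0\}$. Integrating only where the bound holds gives
\[
V(r_\la)=\int_{r_\la}^{r_V}(-V'(t))\,dt\ \ge\ \frac{2}{\la}\Bigl(\frac{1}{r_\la}-\frac{1}{r_V}\Bigr),
\]
hence merely $c_\la\le 1/r_V$, not $c_\la\le 0$. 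The same oversight breaks your monotonicity claim $g'\ge 0$ on all of $(r_\la,+\infty)$: for $r>r_V$ you have $g(r)=1/r$, which is decreasing, so the outer inequality $g(r)\ge c_\la$ there is precisely the missing condition $c_\la\le 0$.

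A clean fix uses the point $r_V^*$ rather than integration to infinity. Your derivative computation does show that $g'(r+0)\ge 0$ on $[r_\la,r_V)$, so $g$ is nondecreasing there; since $r_\la\le r_V^*<r_V$ (by \autoref{ineq.ab}), this gives
\[
c_\la=g(r_\la)\ \le\ g(r_V^*)=\frac{1}{r_V^*}-\frac{\la}{2}V(r_V^*)=\frac{1}{r_V^*}\Bigl(1-\frac{\la}{2}\,\alpha_V\Bigr)=\frac{1}{r_V^*}\Bigl(1-\frac{\la}{\la_V}\Bigr)<0
\]
for $\la>\la_V$. Once $c_\la\le 0$ is established, the outer inequality for $r>r_V$ follows immediately from $g(r)=1/r\ge 0\ge c_\la$.
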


\begin{proof} \ We will be using the sufficient part of the optimality conditions obtained in \autoref{cnsD}, which applies in particular to our radial solution $\rho_\la$ (recall that, in our case,  $\widetilde{\S_\la}(v)= \S_\la(v)$). Clearly these conditions are satisfied if we can check that, for a suitable $r_\la \in [0,+\infty]$, we have:
 \begin{equation}\label{optila}
U_\la -  \frac{\la}{2} V  = c_\la  \quad \forall r \le r_\la, \quad U_\la - \frac{\la}{2} V \ge c_\la \quad \forall r\ge r_\la,
\end{equation}
 where $c_\la\le 0$ is a suitable constant such that $c_\la=0$ if $\|\rho_\la\|<1$ or if $r_\la=+\infty$. 

\paragraph{\bf First case: $\la > \la_V$} We may take $r_\la=\zeta(\la)$ which, according to \autoref{ineq.ab},
is finite not larger than $r_V^*$ and  decreases to $r_V^0$ as $\la\to\infty$. Moreover, thanks to \eqref{testzeta},  it satisfies $- \frac{\la}{2}r_\la^2 V'(r_\la-0) \le 1 \le - \frac{\la}{2}r_\la^2 V'(r_\la+0)$. This ensures that $F_\la$ given in \eqref{Fla=} is associated to a probability $\rho_\la$ and we have
$$  F_\la(r) = \frac{\la}{2} (- r^2 V'(r-0))  \quad \text{if $r\le r_\la$,} \qquad F_\la(r) = 1  \quad \text {if $r\le r_\la$.} $$
We have  now to show that such  $\rho_\la$ fulfills the optimality criterium \eqref{optila}.
 From \eqref{Ula}, we know  that the potential $U_\la$ generated by $\rho_\la$ satisfies
 $$  - r^2 U_\la'(r-0) = -\frac{\la}{2} r^2 V'(r-0)  \quad \forall r \le r_\la, \quad U_\la(r) = \frac1{r} \quad \forall r\ge r_\la. $$
 It follows that 
$$ U_\la (r) =\begin{cases} 
  c_\la + \frac{\la}{2} V(r) & \text{if $r<r_\la$} \\
  \frac{1}{r} & \text{if $r>r_\la$,} \end{cases} $$
where  $c_\la = \frac1{r_\la} - \frac{\la}{2} V(r_\la)$ has been chosen so that  $U_\la$ is continuous at $r_\la$. Since $\la>\la_V$ and recalling that, by \eqref{alpha-r*}, we have $r_\la V(r_\la) \le \a_V$,
we deduce that $c_\la< 0$. It remains to check  the inequality part of 
\eqref{optila}. To that aim we consider the function $f(r) \eqdef \frac1{r}-\frac{\la}{2} V(r) $.
whose right derivative exists and is given by $f'(r+0) = - \frac{\la}{2} V'(r+0) - \frac1{r^2}$.
By \eqref{testzeta} and  \eqref{Lap>0},  for every $r\ge r_\la$, we have $- \frac{\la}{2} r^2 V'(r+0)\ge - \frac{\la}{2} r_\la^2 V'(r_\la+0) \ge 1$, hence $f'(r+0) \ge 0$. The wished inequality follows since $f(r_\la) =c_\la$ 
(in fact $f$ reaches its global maximum at $r=r_\la$). 

\paragraph{\bf Second case: $\la \le \la_V$} Since $\rho_\la\equiv 0$ for $\la =0$, it is not restrictive to assume that 
$\la_V>0$ (\ie, $\a_V<+\infty$) while $ 0<\la\le \la_V$. By construction the measure $\rho_\la:=\frac{\la}{\la_V} \, \rho_V^*$ satisfies $\|\rho_\la\|=\frac{\lambda}{\la_V}$ and is supported on the closed ball of radius $r_\la= r_V^*$.
Therefore, from \eqref{Ula} and the definition \eqref{rho*}, its potential $U_\la$ is characterized by the equalities:
 $$  - r^2 \, U_\la'(r-0) = -\frac{\la}{2} r^2 \, V'(r-0)  \quad \forall r\le r_V^*, \quad
  U_\la(r) = \frac {\la }{\la_V} \frac1{r} \quad \forall r\ge r_V^*.$$
If $r_V^*<+\infty$, by taking into account the continuity of $U_\la$ is continuous at $r=r_V^*$, we obtain
$$ U_\la (r) =\begin{cases} 
c_\la + \frac{\la}{2} V(r) & \text{if $r<r_V^*$} \\
  \frac{1}{r} & \text{if $r>r_\la$} \end{cases} $$
with $c_\la= \frac{\la}{\la_V}  \left(\frac1{r_V^*} - \frac{\la_V}{2} V(r_V^*)\right)$.
The minimality of $r_V^*$ in \eqref{alpha-r*}  and \eqref{laV=} implies that $c_\la=0$ while, for any  $r\ge r_V^*$,
it holds $  U_\la(r) - \frac{\la}{2} V(r) =   \frac{\la}{2r}  (\a_V- r V(r)) \ge 0 .$
Therefore the optimality condition \eqref{optila} is satisfied and $\rho_\la$ is optimal. 
If $r_V^*=\infty$, we are led to the equality  $U_\la = c_\la + \frac{\la}{2} V(r)$ holding for every $r>0$. Then, by  sending $r\to \infty$ and since  $U_\la$ and $V$ are vanishing at infinity, we recover that $c_\la=0$
and still obtain that \eqref{optila} is satisfied.
This concludes the proof of \autoref{easytest}.
\end{proof}

We end this subsection with  several  examples where the different situations described in \autoref{easytest} appear explicitely: a positive threshold $\la_V$ determining two regimes appear in the first three examples. In the third one 
the solution is not compactly supported whenever  $0<\la<\la_V$, while  in the last example the potential is  confining for every $\la>0$. 

 Note that, by means of the relations given in \autoref{equirepar}, we may explicit the opposite of the minimum value of \eqref{toy} as follows:
 \begin{equation} \label {Minfty-rad}  M_\infty(\la v) = \begin{cases}
 		\frac{\la}{2} \, \bra{v,\rho_\la}  & \text{if $0\le \la \le \la_V$} \\
 		\frac{\la}{2} \, \bra{v,\rho_\la} +  \frac{\la}{2} \, V(r_\la) - \frac1{r_\la} & \text{if $\la \ge \la_V$,}
\end{cases}
\end{equation}
where the bracket  $\bra{v,\rho_\la}$ can be recovered for every $\la\ge \la_V$  by using the formula
\footnote{We use that $\bra{v,\rho_\la} =\int_0^\infty \rho_\la( \{v\ge r\})\, dr$ where 
$\rho_\la( \{v\ge r\}) = F_\la(t)$ being $t=V^{-1}(r)$.}
\begin{equation} \label {trick} 
 \bra{v,\rho_\la} = -\int_0^{+\infty} F_\la (t) \, V'(t)\, dt  =  -\int_0^{r_\la} F_\la (t) \, V'(t)\, dt 
+  \, V(r_\la). 
\end{equation}
The relations \eqref{Minfty-rad}\eqref{trick} are used for computing $M_\infty(\la v)$ in the four examples below
and we recover the behavior  of $M_\infty(\la v)$ as $\la\to \infty$ and as $\la\to 0$
predicted by \autoref{CI-dense-domain} and \autoref{slopeM} respectively.
Moreover, in the three first examples, we observe the quadratic behavior of $M_\infty(\la v)$ with respect to $\la $ for $\la <\la_V$ as it was predicted in \autoref{linear}.

\begin{example}\label{V1}  \ Let $V(r)= (1-r^2)_+$. Then
$$   \alpha_V= \frac{2}{3\sqrt{3}}, \quad \la_V= 3\sqrt{3}, \quad r_V^*= \frac1{\sqrt{3}}, \quad r_V^0=0.$$
Accordingly, we have $r_\la= \min\{\frac1{\sqrt{3}}, \la^{-\frac1{3}} \}$
and $\rho_\la $ is given by
$$ \rho_\la = \frac{3\la}{4\pi} \, \L^3 \res B(0, r_\la). $$
The minimum value of \eqref{toy} is the opposite of:
$$ M_\infty(\la v) = \begin{cases} \frac{2\, \la^2} {15 \sqrt{3}}   & \text{if $\la \le 3 \sqrt{3}$}\\
\la - \frac9{5} \la^{\frac1{3}}   & \text{if $\la \ge  3 \sqrt{3}$}
 \end{cases} $$

This example shows that the inclusion  $\supp \rho \subset \supp v$ of \autoref{supprho} may be strict, as it is here the case for $\lambda > \sqrt{3}$.
\end{example}

\begin{example}\label{V2}  
Let $V(r) = \frac1{r} \wedge 1$. Then: 
$$   \alpha_V= 1, \quad \la_V=2, \quad r_V^*=r_V^0= 1 .$$ 
Note that here the maximum of $r V$ is reached at any $r\ge 1$. We get $r_\la=1$  for every $\la>0$    and
$$ \rho_\la = \frac{\la \wedge 2}{8\pi}\,  \mathcal{H}^2 \res \{|x|=1\}, $$
while 
$$ M_\infty(\la v) = \begin{cases} \frac{\la^2}{4}  & \text{if $\la \le 2$}\\
\la -1  & \text{if $\la \ge  2$.}
 \end{cases} $$

In this example  $\rho_\la$ concentrates on the interface $|x|=1$ where  $V'$
exhibits a jump. Surprisingly $\rho_\la$ remains constant when $\la$ is beyond the ionization threshold $\la_V=2$.
\end{example}

\begin{example}\label{V3}  
Let $V(r) = \frac1{1+r}$. Then: 
 $$ \alpha_V = 1, \quad \la_V = 2, \quad r_V^* = +\infty, \quad r_V^0=0. $$ 
 The radius $r_\la$ of the ball supporting $\rho_\la$ is given by
 $$ r_\la = +\infty \quad  \text{if $\la \le 2$}, \quad 
 r_\la =\frac{1}{\sqrt{\frac{\la}{2}}-1} \quad \text{if $\la > 2$}.
 $$
Like in the Thomas-Fermi model (see \cite{Lieb-Simon77}), $\rho_\la$ is not compactly supported once $\la$ is below a threshold value. After tedious but straightforward computations,  we get
$$ \rho_\la =  \frac{\la \wedge 2}{4\pi} \frac1{r(1+r)^3} \, \L^3 \res B(0, r_\la) $$
and
$$ M_\infty(\la v) = \begin{cases} \la + 1 - 3 \, \sqrt{\frac{\la}{2}} + \frac1{ 3\, \sqrt{\frac{\la}{2}}} & \text{if $\la \ge2$} \\
	\frac{ \la^2} {12}  &\text{if $\la \le 2$.} \end{cases} $$
\end{example}

\begin{example}\label{V4}
 Let $V(r) = \frac1{\sqrt{r}} \wedge 1$. Then
$$   \alpha_V=+\infty, \quad \la_V=0, \quad r_V^*= +\infty, \quad r_V^0=0.$$
In view of \autoref{easytest}, the potential $V$ is strongly confining, \ie, $\rho_\la\in \Prob(\R^3)$ for every $\la>0$.  The repartition function of $\rho_\la$
 is given by  
 $$  F_\la(r) = \begin{cases}
 	\frac{\la}{4} \sqrt{r} \wedge 1 & \text{ if $r\ge 1$} \\
 	0 & \text{if $r< 1$.}
 \end{cases} $$
Accordingly $r_\la = \max \left\{\frac{16}{\la^2}, 1\right\}$ and the probability  $\rho_\la$ reads
$$ \rho_\la = \begin{cases}  \frac{\la}{16\pi} \, \Big(  \frac{1}{2}\, r^{-\frac5{2}}\,  \L^3 \res \Sigma_\la + \mathcal{H}^2 \res \{|x|=1\}  \Big) & \text{if $\la < 4$} \\
  \frac{1}{4\pi}\, \mathcal{H}^2 \res \{|x|=1\} & \text{if $\la \ge  4$} ,
 \end{cases} $$
where, for $\la<4$, we denote  $\Sigma_\la := \{1< |x|< \frac{16}{\la^2}\}$.
 After some computations, we get
 $$ M_\infty(\la v) = \begin{cases} 
 	\frac{\la^2}{16} \log \frac{16}{\la^2} + \frac{3 \la^2}{16}   & \text{if $\la \le 4$} \\
 	\la - 1   & \text{if $\la \ge  4$.}
 \end{cases} $$

Notice that for $\la<4$, we have a mixed situation where $\rho_\la$  concentrates a fraction of its mass  on the interface $|x|=2$ while the other part is distributed in the crown $\Sigma_\la$. This kind of phenomenon was already observed for an anistropic cost function $\ell$ in case of a quadratic confining potential (see for instance \cite{carillo}).
\end{example}

\section{Continuum models as limits of small range interaction costs}\label{crowd}

In this section we study  the asymptotic  behavior of the multi-marginal cost $C_N$ when the two-particles repulsive cost $\ell$ is rescaled with respect to a small  parameter $\eps$ which tends to zero while $N\to \infty$. More precisely, we propose to replace $\ell$ with $\ell_\eps(t) \eqdef \ell(t/\eps)$ in the definition \eqref{def:c_N} of $c_N$, thus  leading to the small range interaction cost:
\begin{equation}\label{def:cNeps}
  c_N^\eps(x_1, \dotsc, x_N) = \frac{2}{N(N-1)} \sum_{1 \leq i < j \leq N}  \ell \pa{\frac{\abs{x_i-x_j}}{\eps}}.
\end{equation}
Motivated by the passage from discrete to continuous models arising for instance in crowd or traffic congestion, we will assume that the particles stay in the closure $\Ob$ of a bounded Lipschitz domain $\O\subset\R^d$ and we
propose to scale $\eps$ so that
\begin{equation}\label{def:kappa}
	N\eps^d \to \kappa
\end{equation}
where  $\kappa$ is a positive  parameter which  roughly represents  the limit volume as $\eps\to 0$ of a cluster of $N$ particles placed at the nodes of a $\eps Q$-periodic grid, where $Q = (-\frac{1}{2},\frac{1}{2})^d$ is the unit cube of $\R^d$. 

Choosing $N$ to be the driving parameter as in the previous sections, we set 
\[ \eps = \eps_N \eqdef \pa{\frac{\kappa}{N}}^{1/d}. \]
Accordingly, the functional on $\Prob(\Ob)$ under study will be
\begin{equation} \label{def:FNeps}
	F_N(\rho) \eqdef \inf \gra{\int c_N^{\eps_N}(x_1, \dotsc, x_N) \ dP(x_1, \dotsc, x_N) \st P \in \Pi_N(\rho)}.
\end{equation}

Our aim is to characterize the $\Gamma$-limit of $F_N$ as a functional on $\Prob(\Ob)$. It is natural to expect such a $\Gamma$-limit to be the restriction to $\Prob(\Ob)$ of a local functional defined on absolutely continuous measures  that is of the form $F_\infty(\rho) = \int_\O f(\frac{d\rho}{dx})\, dx$ for a suitable convex l.s.c integrand $f$. This guess is confirmed in a particular case on which we focus now on.
The general case is, in our opinion, a very challenging issue that deserves further studies.

\medskip
Henceforth we assume that the repulsive potential is given by
\begin{equation} \label{hardpot}
	\ell(t) = \begin{cases} 0 & \text{if $t \geq 1$} \\ +\infty & \text{otherwise.}\end{cases}
\end{equation}

Note that this function $\ell$ does not satisfy the local integrability assumption (H4) given in the introduction. The corresponding interaction energy given \eqref{def:cNeps} is related to the so called {\em  hard sphere model}, in which congruent spheres of diameter $\eps$ centered at $x_i$ are packed in a container in such a way that they do not overlap each other.

Despite its simplicity this hard-sphere repulsive potential is used in physics for understanding the equilibrium and dynamical properties of a variety of materials, including simple fluids, colloids, glasses,  granular media \cite{torquato2006packing}. We suggest that it can be also used in the crowd motion modeling in order to justify the passage from the discrete to the continuous level, with a possible link with congestion  transport theory as  depicted  in \cite{maury2018congested}.

In this crowd model the particles or individuals have a given minimal distance $\eps$ to each other and are located on discrete subset $\Sigma$ of a container $\Ob$. To any such configuration we may associate the empirical measure 
\[ \mu_\Sigma \eqdef \frac{1}{\sharp(\Sigma)} \sum_{x\in \Sigma} \delta_x. \]

Accordingly, we may define another functional on $\Prob(\Ob)$ by setting
\begin{equation} \label{def:FNmu}
	\widetilde{F_N}(\rho) = \begin{cases} F_N(\rho)  & \text{if $\rho = \mu_\Sigma$ for some $\Sigma \subset \Ob$} \\ +\infty & \text{otherwise.} \end{cases}
\end{equation} 
As can be readily checked, $\widetilde{F_N}$ is non-convex and larger than $F_N$. Observe that both functionals $F_N$ and $\widetilde{F_N}$ are indicator functions, meaning that they take values in $\gra{0,+\infty}$, due to the particular choice of the cost function $\ell$ in \eqref{hardpot}. Moreover, they share the same Fenchel conjugate as we have:
\begin{equation} \label{dualFN}
	F_N^*(v) = \widetilde{F_N}^*(v) = \sup \gra{S_N v(x_1, \dotsc, x_N) \st x_i \in \Ob, \abs{x_i - x_j} \geq \eps_N \text{ if $i \neq j$}}.
\end{equation}
Therefore, since $F_N$ is convex and weakly lower semicontinuous, we have $\widetilde{F_N}^{**} = F_N$, which means that $F_N$ coincides with the convex l.s.c. envelope of $\widetilde{F_N}$. Note that the supremum in \eqref{dualFN} is actually a maximum, as $\Ob^N$ is compact.
 
\medskip   
Our convergence result involves a constant related to the densest sphere packing volume fraction in $\R^d$ namely:
\begin{equation} \label{def:packing}
	\gamma_d \eqdef \inf_{k\in \N^*} \frac{S(Q_k)}{k^d} = \lim_{k\to \infty} \frac{S(Q_k)}{k^d},
\end{equation}
 where $Q_k = \left[ -\frac{k}{2}, \frac{k}{2} \right]^d$ and, for any Borel set $A\subset\R^d$, $S(A)$ denotes the maximal number of points in $A$ with mutual distance larger or equal to $1$, that is (denoting $\Delta$ the diagonal of $\R^d$)
\begin{equation} \label{def:S}
	S(A) \eqdef \sup \gra{\sharp(\Sigma) \st \Sigma\subset A, \abs{x-y} \geq 1 \ \forall (x,y) \in \Sigma^2\setminus \Delta} 
\end{equation}
Further details related to this constant $\gamma_d$ will be given later. 
 
A specific feature of the confined  hard spheres model is that the parameter $\kappa$ introduced in \eqref{def:kappa} cannot exceed a threshold which depends of $\gamma_d$ and of the volume of the container. More precisely, the congestion ratio $\theta$ defined by 
\begin{equation} \label{def:theta}
	\theta \eqdef \frac{\kappa}{\gamma_d |\O|}
\end{equation}
is required to be not larger than 1. In virtue of \autoref{congestion} below, this condition is necessary to have that the $\Gamma$-limit of $(F_N)$ (resp $\widetilde{F_N}$) is not identically $+\infty$. 

\begin{thm}\label{micro-macro}
Let $\ell$ be given by \eqref{def:FNeps} and assume that $\theta$ given by \eqref{def:theta} satisfies $\theta\in [0,1)$. Then the sequences $(F_N)$ and $(\widetilde{F_N})$ defined  by \eqref{def:FNeps} and \eqref{def:FNmu} respectively $\Gamma$-converge to the indicator function
of the set
$$ \mathcal{K} \eqdef \gra{ \rho\in \Prob(\Ob) \ \st\   \rho \ll \L^d\res\O\ ,\  \frac{d\rho}{d\L^d}\,  \le\, \frac1{\theta \, |\O|} \text{ a.e.}}. $$

In other words, for every absolutely continuous $\rho\in \Prob(\Ob)$, the shared $\Gamma$-limit is given by $F_\infty(\rho) = \int_{\Ob} f(\frac{d\rho}{dx}) dx$ where $f$ is the indicator of the interval $[0, \frac1{\theta |\O|}]$, while $F_\infty(\rho) = +\infty$ otherwise. 
\end{thm}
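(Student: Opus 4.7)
Since $\widetilde{F_N} \ge F_N$ pointwise and, as noted right after \eqref{dualFN}, $F_N = \widetilde{F_N}^{**}$ is the convex l.s.c.\ envelope of $\widetilde{F_N}$, it suffices to establish the $\Gamma$-liminf inequality for $F_N$ (which transfers to $\widetilde{F_N}$ by $\widetilde{F_N}\ge F_N$) and to produce a recovery sequence for $\widetilde{F_N}$ (which transfers to $F_N$ by the same inequality). Both $\Gamma$-limits will then coincide with the indicator of $\K$, which is weakly-$*$ lower semicontinuous since the density bound passes to weak-$*$ limits.

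\textbf{Liminf.} Let $\rho_N \wconv \rho$ with $\liminf_N F_N(\rho_N) < \infty$, so $F_N(\rho_N)=0$ along a subsequence. Pick $P_N \in \Pi_N(\rho_N)$ concentrated on the set $\{ x\in \Ob^N : \abs{x_i-x_j}\ge \eps_N \text{ for } i\ne j\}$. For any closed ball $B\subset\R^d$, since $P_N$-almost every configuration is $\eps_N$-separated, the rescaling in \eqref{def:S} yields
\[
N\rho_N(B) \ = \ \int_{\Ob^N} \sum_{i=1}^N \One_B(x_i)\, dP_N(x) \ \le \ S(B/\eps_N).
\]
From \eqref{def:packing} a cubical sandwich gives $S(B/\eps_N)\eps_N^d/|B| \to \gamma_d$ as $\eps_N\to 0$, and combined with $N\eps_N^d\to\kappa$ this shows $\rho_N(B)\le (\gamma_d/\kappa)|B| + o(1)$. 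Passing to the limit along radii $r$ such that $\rho(\partial B)=0$ and invoking Lebesgue's differentiation theorem, $\rho \ll \L^d$ with density bounded above by $\gamma_d/\kappa = 1/(\theta|\O|)$ a.e.\ on $\Ob$, so $\rho \in \K$.

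\textbf{Recovery.} Given $\rho \in \K$, I construct $\eps_N$-separated $N$-point sets $\Sigma_N \subset \Ob$ with $\mu_{\Sigma_N}\wconv\rho$. First I regularize to $\rho^\eta \eqdef (1-\eta)\rho + \eta|\O|^{-1}\L^d\res\O$, whose density is at most $(1-\eta)\gamma_d/\kappa + \eta/|\O|$; the strict inequality $\theta<1$ yields $1/|\O|<\gamma_d/\kappa$, so this density is bounded by $(1-\eta')\gamma_d/\kappa$ for some fixed $\eta'=\eta'(\eta)>0$. Next, I partition $\Ob$ into cubes $Q_\alpha$ of side $\delta_N$ with $\eps_N \ll \delta_N \to 0$, reserving a buffer strip of width $\eps_N$ around each cube; in each inner sub-cube $Q_\alpha^\flat$ I place $n_\alpha \eqdef \floor{N\rho^\eta(Q_\alpha)}$ points in an optimal $\eps_N$-packing, which is feasible because
\[
n_\alpha \ \le \ \frac{(1-\eta')\gamma_d\, \delta_N^d N}{\kappa} \ \le \ S(Q_\alpha^\flat/\eps_N)
\]
for $N$ large, by \eqref{def:packing} applied to $Q_\alpha^\flat/\eps_N$ whose side diverges. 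The buffer keeps adjacent cubes globally $\eps_N$-apart. The total point count differs from $N$ by $O(|\O|/\delta_N^d)=o(N)$, which I compensate by packing the missing points into a reserved subregion disjoint from the $Q_\alpha^\flat$'s, whose Lebesgue measure can be made to tend to zero. A Riemann-sum estimate then gives $\mu_{\Sigma_N}\wconv\rho^\eta$, and a diagonal extraction as $\eta\to 0$ yields the recovery sequence for $\rho$.

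\textbf{Main obstacle.} The hardest step is the recovery: when $\rho$ saturates the density bound, the allocation per mesoscopic cube forces a nearly optimal packing, whereas $\gamma_d$ is only an asymptotic density. Both the strict inequality $\theta<1$ (giving a uniform safety margin) and the smoothing $\rho\to\rho^\eta$ (turning a pointwise bound into a strict one) seem necessary to make the packing feasibility hold for finite $\delta_N/\eps_N$. The most delicate bookkeeping is controlling the boundary-layer losses on $\partial\O$, the buffer-strip losses between cubes, and the rounding errors from the floor function, so that their combined mass defect stays $o(1)$.
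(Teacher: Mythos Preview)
Your proof is correct, but both halves take a different route from the paper, and the contrast is instructive.

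For the $\Gamma$-liminf, the paper goes through duality: it applies the Fenchel inequality $\langle v,\rho\rangle \le \limsup_N F_N^*(v)$, expresses $F_N^*(v)$ via the layer-cake formula $\int_0^\infty N^{-1}\sharp(\{v>t\}\cap\Sigma_N)\,dt$, bounds each slice by $n_{\eps_N}(\{v>t\})/N$, and invokes the packing lemma to obtain $\langle v,\rho\rangle\le(\gamma_d/\kappa)\int_\Omega v\,dx$ for all nonnegative continuous $v$. Your direct argument on balls --- $N\rho_N(B)\le S(B/\eps_N)$ from the existence of an optimal plan supported on separated configurations, then the packing asymptotics --- is more elementary and bypasses duality entirely; it exploits the fact that the marginal of a plan supported on separated configurations inherits the packing bound pointwise, not just on average.

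For the recovery, the paper does \emph{not} use a mesoscopic cube grid at scale $\delta_N$. Instead it first reduces by density to piecewise constant probabilities $\rho=\sum_i t_i\One_{A_i}\,dx$ on finitely many fixed open sets $A_i\Subset\Omega$ with $|\partial A_i|=0$ (Lemma~\ref{K0}), then on each $A_i$ it packs points at spacing $\delta_i\eps_N$ with $\delta_i=(u^*/(\eta t_i))^{1/d}$ so that, by the $\tilde n_\eps$ asymptotics, the empirical measure on $A_i$ converges to $\One_{A_i}/|A_i|$; the leftover mass goes into $A_0=\Omega\setminus\overline{\omega}$, and one sends $\eta\to 1$. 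The advantage of the paper's approach is that the $A_i$'s are fixed as $N\to\infty$, so the packing lemma is invoked once per piece with no uniformity in $\alpha$ to track, and the buffer-strip / boundary-cube bookkeeping you rightly flag as delicate is absorbed into the density lemma and the $\tilde n_\eps$ construction (which already builds in the $\eps$-distance to $\partial A_i$). Your mesoscopic-scale construction is the standard one in statistical mechanics and works, but the paper's step-function reduction is cleaner precisely because it decouples the geometric approximation of $\rho$ from the packing at scale $\eps_N$.
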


As a corollary, we have the convergence of the Wasserstein distance $W_2(\rho, \mathcal{K}_N) = \min \gra{W_2(\rho, \nu) : \nu \in  \mathcal{K}_N}$, where
\[ \mathcal{K}_N \eqdef \gra{\mu_\Sigma \st \Sigma\subset \Ob, \sharp(\Sigma) = N, |x - y| \ge \e_N \text{ on $\Sigma_N^2\setminus\Delta$}}. \]

\begin{corollary}\label{distance}
For every $\rho \in \Prob(\Ob)$, it holds \ $W_2(\rho,\mathcal{K}_N) \to W_2(\rho,\mathcal{K})$ as $N\to\infty$.
\end{corollary}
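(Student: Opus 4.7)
The plan is to deduce \autoref{distance} from \autoref{micro-macro} by reading the $\Gamma$-convergence there as a Kuratowski convergence of the level sets $\mathcal{K}_N=\{\widetilde{F_N}=0\}$ to $\mathcal{K}=\{F_\infty=0\}$, and then transferring it to the Wasserstein distance via the compactness of $\Ob$.

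First I would record that, since $\Ob$ is compact, $(\Prob(\Ob),W_2)$ is a compact metric space whose topology coincides with the weak$^*$ one, so that the map $\nu\mapsto W_2(\rho,\nu)$ is weakly$^*$ continuous. Next I would verify that both $\mathcal{K}_N$ and $\mathcal{K}$ are weakly$^*$ closed: for $\mathcal{K}$ this is immediate since the density bound passes to weak$^*$ limits, and for $\mathcal{K}_N$ one argues that a weak$^*$ limit of $\mu_{\Sigma_n}$ with $\sharp(\Sigma_n)=N$ and mutual distances $\ge\e_N$ must, up to relabeling and extracting subsequences, come from pointwise convergence in $\Ob^N$ of the $N$ atoms, the spacing condition preventing any coalescence at the limit. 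Hence both sets are compact and the minima $W_2(\rho,\mathcal{K}_N)$, $W_2(\rho,\mathcal{K})$ are attained, say at $\nu_N\in\mathcal{K}_N$ and $\bar\nu\in\mathcal{K}$; non-emptiness of $\mathcal{K}_N$ for $N$ large comes for free from the $\Gamma$-$\limsup$ inequality applied to any element of the (non-empty) set $\mathcal{K}$.

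For the upper bound I would invoke the $\Gamma$-$\limsup$ at $\bar\nu$ to produce a recovery sequence $\tilde\nu_N\wconv\bar\nu$ with $\widetilde{F_N}(\tilde\nu_N)\to 0$, which forces $\tilde\nu_N\in\mathcal{K}_N$ eventually, so that by continuity of $W_2$
\[ \limsup_{N\to\infty} W_2(\rho,\mathcal{K}_N)\le \lim_{N\to\infty} W_2(\rho,\tilde\nu_N)=W_2(\rho,\bar\nu)=W_2(\rho,\mathcal{K}). \]
For the lower bound I would extract from the precompact sequence $(\nu_N)$ a subsequence $\nu_{N_k}\wconv\nu_\infty$; since $\widetilde{F_{N_k}}(\nu_{N_k})=0$, the $\Gamma$-$\liminf$ inequality forces $F_\infty(\nu_\infty)=0$, i.e.\ $\nu_\infty\in\mathcal{K}$, and continuity of $W_2$ then yields
\[ \lim_{k\to\infty} W_2(\rho,\mathcal{K}_{N_k})=\lim_{k\to\infty}W_2(\rho,\nu_{N_k})=W_2(\rho,\nu_\infty)\ge W_2(\rho,\mathcal{K}). \]
Applying this to an arbitrary converging subsequence proves $\liminf_N W_2(\rho,\mathcal{K}_N)\ge W_2(\rho,\mathcal{K})$, and combining the two bounds gives the claim.

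The only mildly technical point is the weak$^*$ closedness of $\mathcal{K}_N$; once this is in hand, everything else is a standard conversion of $\Gamma$-convergence of indicator functions into convergence of distances on the compact metric space $(\Prob(\Ob),W_2)$.
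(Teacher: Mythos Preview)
Your argument is correct and is precisely the natural way to derive the corollary from \autoref{micro-macro}; the paper itself states \autoref{distance} without proof, treating it as an immediate consequence of the $\Gamma$-convergence of $\widetilde{F_N}$. The ingredients you isolate---that $W_2$ metrizes weak$^*$ convergence on $\Prob(\Ob)$ since $\Ob$ is compact, that $\mathcal{K}_N$ is closed (via compactness of $\Ob^N$ and the closed spacing constraint), and that $\Gamma$-convergence of the indicators $\widetilde{F_N}$ yields Kuratowski convergence of $\mathcal{K}_N$ to $\mathcal{K}$, hence convergence of distances---are exactly what is needed, and your recovery-sequence / subsequential-limit treatment of the two inequalities is sound.
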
 

Before presenting the proof, we need some results linked with the packing constant $\gamma_d$. First we mention that the equality in \eqref{def:packing} is a consequence of a classical result in ergodic theory (see for instance \cite[Thm 2.1]{licht2002global}) which  applies to any set function $S$ on Borel subsets of $\R^d$  which is translation invariant and subadditive on disjoint sets, \ie, $S(A\cup B) \le S(A) + S(B)$ whenever $A\cap  B=\emptyset$. Observe moreover that, for our $S$ given in \eqref{def:S}, the subadditivity property above applies also for intersecting subsets.

Given a bounded Borel subset $A\subset \R^d$, we define for every $\eps > 0$: 
\begin{align*}
&n_\eps(A) \eqdef \sup \gra{ \sharp(\Sigma) \st \Sigma\subset A, \abs{x-y} \geq \eps \ \forall (x,y)\in\Sigma^2\setminus \Delta} \\
&\tilde n_\eps(A) \eqdef \sup \gra{\sharp(\Sigma) \st \Sigma\subset A, \abs{x-y} \geq \eps \ \forall (x,y)\in(\Sigma^2\setminus \Delta) \cup (\Sigma\times \partial A)}.
\end{align*}

Note that, since $A$ is bounded, the suprema above are finite, hence they are both maxima: $n_\e(A)$ is the maximal number of points in $A$ with mutual distance $\ge\e$, while ${\tilde n_\e}(A)$ denotes the maximal number of non overlaping open balls of diameter $\e$ contained in $A$. Obviously $n_\e$ and ${\tilde n_\e}$ are non decreasing set functions and ${\tilde n_\e} \le n_\e$.

It can be readily checked that $n_\e(Q_a) = S(Q_{a/\e})$ for every $a>0$ and that
\[ n_\e(A\cup B) \leq n_\e(A) + n_\e(B) \quad \text{for all Borel sets $A, B$}. \]
In contrast, the set function ${\tilde n_\e}$ is super-additive on disjoint sets, \ie,
\[ {\tilde n}_\e(A\cup B) \geq {\tilde n}_\e(A) + {\tilde n}_\e(B) \quad \text{ whenever $A\cap B = \emptyset$}. \]

\begin{lemma}\label{packinglemma}  Let $A$ be a bounded Borel subset of $\R^d$ with non empty interior such that $|\partial A| = 0$. Then
\begin{equation}
	\label{gamma-d} \lim_{\e\to 0} \e^d n_\e(A) = \lim_{\e\to 0} \e^d \tilde{n}_\e(A) = \gamma_d |A|. 
\end{equation}

Furthermore, for every $\e>0$ let $\Sigma_\e\subset A$ be an optimal subset for $ n_\e(A)$ (resp. for $\tilde{n}_\e(A)$). 
Then the associated empirical measure $\mu_\e= \frac1{n_\e} \sum_{i=1}^{n_\e} \delta_{x_i}$ converges tightly to the uniform probability density on $A$
as $\e\to 0$.
\end{lemma}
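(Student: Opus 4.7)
The plan is to establish \eqref{gamma-d} by a sandwich argument, using subadditivity of $n_\e$ to get the upper bound $\limsup_{\e \to 0} \e^d n_\e(A) \le \gamma_d |A|$ and superadditivity of $\tilde n_\e$ (on disjoint sets) to get the matching lower bound $\liminf_{\e \to 0} \e^d \tilde n_\e(A) \ge \gamma_d |A|$. Combined with the trivial inequality $\tilde n_\e \le n_\e$, this forces all four extremal quantities to coincide with $\gamma_d |A|$.

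First I would handle the cube case. By translation invariance and the scaling $x \mapsto x/\e$, one has $n_\e(Q_a) = S(Q_{a/\e})$ and $\tilde n_\e(Q_a) = n_\e(Q_{a-2\e}) = S(Q_{(a-2\e)/\e})$ for every $a > 2\e$. Since $S$ is monotone in $a$, the limit $S(Q_k)/k^d \to \gamma_d$ along integers extends to arbitrary real scales, giving $\lim_{\e \to 0}\e^d n_\e(Q_a) = \lim_{\e \to 0} \e^d \tilde n_\e(Q_a) = \gamma_d a^d$. Next, for the upper bound on $n_\e(A)$, I would cover $A$ by the union $A_h^+$ of the axis-aligned grid cubes of side $h$ that meet $A$; subadditivity and the cube case give $\limsup_{\e \to 0} \e^d n_\e(A) \le N_h^+ \gamma_d h^d = \gamma_d |A_h^+|$, and since $|\partial A|=0$ one has $|A_h^+| \to |A|$ as $h \to 0$.

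For the lower bound, I would use the family $\mathcal{C}_{h,\e}$ of grid cubes of side $h$ that are contained in $A$ and at distance $\ge \e$ from $\partial A$. Inside each $Q \in \mathcal{C}_{h,\e}$, I would pack $n_\e(Q_{h-2\e})$ points that are $\e$-separated and at distance $\ge \e$ from $\partial Q$; because of the $\e$-buffer inside each cube, points from different cubes of $\mathcal{C}_{h,\e}$ stay at mutual distance $\ge \e$, and because of the $\e$-buffer around each cube, all selected points lie at distance $\ge \e$ from $\partial A$. This union is thus admissible for $\tilde n_\e(A)$, yielding $\tilde n_\e(A) \ge |\mathcal{C}_{h,\e}| \cdot n_\e(Q_{h-2\e})$. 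Letting $\e \to 0$ with $h$ fixed gives $\liminf \e^d \tilde n_\e(A) \ge \gamma_d h^d \, |\mathcal{C}_{h}^{-}|$, where $\mathcal{C}_h^{-}$ is the collection of grid cubes contained in $A$; then $h \to 0$ gives $\gamma_d |A|$ thanks to $|\partial A|=0$.

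For the second claim on empirical measures, fix an optimiser $\Sigma_\e$ and let $\mu_\e$ be the associated empirical measure. For any grid cube $Q$ of side $h$, the restriction $\Sigma_\e \cap Q$ is $\e$-separated inside $Q \cap A$, so the upper bound in Step~2 (applied to $Q\cap A$, which has negligible boundary) gives $\limsup_{\e\to 0} \e^d \#(\Sigma_\e \cap Q) \le \gamma_d |Q\cap A|$. Summing over the finitely many cubes meeting $A$ yields $\e^d n_\e(A)$ on the left, which tends to $\gamma_d|A| = \sum_Q \gamma_d |Q\cap A|$; equality in the sum of $\limsup$'s forces equality termwise, so $\lim_\e \e^d \#(\Sigma_\e \cap Q) = \gamma_d |Q\cap A|$ for every grid cube $Q$. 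Dividing by $n_\e(A)$ and approximating any $\phi \in C(\Ob)$ by step functions on a fine grid via uniform continuity gives $\int \phi\, d\mu_\e \to |A|^{-1}\int_A \phi\, dx$, i.e.\ the tight convergence to the uniform probability on $A$. The same argument goes through with $\tilde n_\e$ in place of $n_\e$. The main obstacle is the bookkeeping in Step~3: one must choose the double buffer (inside each cube and around each cube) so that the resulting configuration is admissible for $\tilde n_\e(A)$, and verify that the cube count $|\mathcal{C}_{h,\e}|$ has the desired asymptotic $\sim |A|/h^d$ under the nested limit $\e \ll h \ll 1$; everything else is a routine application of subadditivity and the cube-scaling identity.
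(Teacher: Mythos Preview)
Your proposal is correct and follows the same overall sandwich strategy as the paper (cube case first, then subadditivity of $n_\e$ for the upper bound and superadditivity of $\tilde n_\e$ for the lower bound). There are, however, two genuine differences in execution worth noting.

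For the lower bound on $\tilde n_\e(A)$, the paper does not use your double-buffer grid construction. Instead it invokes the Besicovitch--Morse covering theorem to produce a countable disjoint family of closed cubes $Q_n \subset \mathring A$ with $|\mathring A \setminus \cup_n Q_n| = 0$, and then applies superadditivity of $\tilde n_\e$ directly. This is cleaner and avoids the bookkeeping you flag as the ``main obstacle'' (tracking $|\mathcal{C}_{h,\e}|$ through the nested limit $\e \ll h$), at the price of citing a non-trivial covering theorem. Your approach is more elementary and self-contained.

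For the empirical-measure statement, the paper's argument is shorter than yours: it extracts a weak cluster point $\mu$ of $(\mu_\e)$, shows the one-sided bound $\mu(Q_\delta(x_0)) \le |Q_\delta(x_0)\cap A|/|A|$ for every cube (exactly your $\limsup$ estimate), deduces that $\mu$ is absolutely continuous with density $\le 1/|A|$, and then observes that the constraint $\mu(\bar A)=1$ with $|\bar A|=|A|$ forces the density to equal $1/|A|$. This avoids your ``equality in a sum of $\limsup$'s forces termwise equality'' step and the subsequent step-function approximation; on the other hand, your argument is perfectly valid (for a finite sum with $\sum_Q \limsup a_{Q,\e} \le L$ and $\sum_Q a_{Q,\e} \to L$, one gets $\liminf a_{Q,\e} \ge L - \sum_{Q'\neq Q}\limsup a_{Q',\e} \ge L_Q$), and has the minor advantage of giving convergence of the full sequence directly rather than via subsequences.
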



%

\begin{proof}
First we show \eqref{gamma-d} when $A = Q_a$ for $a>0$. Observe that $n_\e(Q_a) = S(Q_{\frac{a}{\e}})$ while $S(Q_{k_\e}) \leq S(Q_{\frac{a}{\e}}) \leq S(Q_{k_\e+1})$ being $k_\e$ the integer part of $a/\e$. Then the equality $\lim_{\e\to 0} \e^d n_\e(Q_a) = \gamma_d a^d$ follows since, by \eqref{def:packing}, we have  $S(Q_{k_\e})\sim \gamma_d\, k_\e^{-d}$ as $\e\to 0$.

On the other hand, for $\e < \delta < a$, we have $n_\e(Q_{a - \delta}) \leq {\tilde n_\e}(Q_a) \leq n_\e(Q_{a})$. Thus by applying the previous convergence to $Q_a$ and $Q_{a - \delta}$ and then sending $\delta\to 0$, we deduce that $\lim_{\e\to 0} \e^d\, {\tilde n_\e}(Q_a) = \gamma_d a^d.$     

Let now $A$ be a Borel set with non empty interior $\mathring{A}$ such that $|\partial A|=0$. We consider the family of hypercubes $\mathcal{A} \eqdef \gra{Q(x, a) \st x \in \ov A, a>0}$ being $Q(x,a)= x + Q_a$. As $\ov A$ is compact, for every $\delta > 0$ we may find a finite subfamily $\{Q_i: i\in I\} \subset \mathcal{A}$ such that $\ov A \subset \cup_{i\in I} Q_i $ and $\sum_{i\in I} |Q_i| \leq |\ov A| + \delta$. By the subadditivity property of the set function $n_\e$ and by using the first step, we get
\[ \limsup_{\e\to 0} \e^d n_\e( A) \leq \sum_{i\in I} \limsup_{\e \to 0} \e^d n_\e(Q_i) \leq \gamma_d \sum_{i\in I} |Q_i| 
\leq \gamma_d (|\ov A| + \delta) = \gamma_d (|A| + \delta), \]
where we used also that $|\partial A| = 0$; by sending $\delta\to 0$ we get
\[ \limsup_{\e\to 0} \e^d n_\e( A) \leq \gamma_d |A|. \]

On the other hand by Besicovitch's Covering Theorem (generalized by Morse to families of hypercubes, see for instance \cite{morse1947perfect}), there exists a countable family $\gra{Q_n} \subset \mathcal{A}$ such that $Q_n \subset \mathring{A}$, $Q_m \cap Q_n =\emptyset$ if $m \neq n$, with $\left|\mathring{A} \setminus \cup_n Q_n \right| = 0.$  By the superadditivity of the set function ${\tilde n_\e}$ and by applying first step to each $Q_n$, we deduce that
\[ \liminf_{\e\to 0} \e^d {\tilde n}_\e( A) \geq \sum_n \liminf_{\e\to 0} \e^d n_\e( Q_n) \geq \gamma_d \sum_{i\in I} |Q_i| \geq \gamma_d |\mathring{A}|\ . \]
Recalling that $\tilde{n}_\e( A) \leq n_\e(A)$ and that $\partial A$ was assumed to be of vanishing Lebesque measure, we deduce
the converse inequality  $ \liminf_{\e\to 0} \e^d n_\e(A) \ge \gamma_d |A|,$
whence  \eqref{gamma-d}.

\medskip
Let us now prove the second assertion of \autoref{packinglemma}.  Let $\Sigma_\e$ be an optimal subset of $A$ associated with $n_\e(A)$. Up to a subsequence we may assume that $\mu_\e$ converges tightly to a probability measure $\mu$ supported on the compact set $\ov A$. For every $x_0 \in \R^d$  and $\delta > 0$, by applying \eqref{gamma-d}, we find that: 
\[ \mu (Q_\delta(x_0)) \leq \liminf_{\e\to 0} \frac{\sharp(\Sigma_\e \cap Q_\delta(x_0))}{n_\e(A)} \leq \limsup_{\e\to 0}  \frac{n_\e(Q_\delta(x_0)\cap A)}{n_\e(A)} = \frac{| Q_\delta(x_0)\cap A|}{|A|}. \]

Since $x_0$ and $\delta$ are arbitrary, we infer that $\mu$ is an absolutely continuous measure with a density  $\frac{d\mu}{dx} \le \frac{1}{|A|}$ a.e. in $\ov A$. Thus as $\mu(\ov A)=1$ (and $|\ov A|=|A|$), we conclude that $\mu$ is the  uniform probability density on $A$ and it is the unique cluster point of $\mu_\e$ as $\e\to 0$.
In view of the first equality in \eqref{gamma-d}, the previous arguments work as well when substituting $n_\e$ with $\tilde{n}_\e$.
\end{proof}

\begin{remark} \label{packingequivalence}
In view of \autoref{packinglemma}, the relation between $\gamma_d$ given in \eqref{def:packing} and the best density of spheres packing constant in $\R^d$ is now clear since the maximal volume fraction of non overlapping spheres of diameter $\e$ which can be placed in a regular subset $A$ of $\R^d$, namely $\tilde{n}_\e(A) \,\omega_d \pa{\frac{\e}{2}}^d$ (being $\omega_d$ the volume of the unit sphere), is asymptotically equal to $\gamma_d\, \frac{\omega_d}{2^d}$.
The exact value of this volume fraction is well known for $d \leq 3$, where the optimal configuration can be recovered from a periodic lattice (regular hexagonal lattice for $d=2$). We refer to \cite{conway1999sphere} for a survey in more dimensions of the celebrated sphere packing problem.										
\end{remark}


As a consequence of \autoref{packinglemma} we have 

\begin{lemma}\label{congestion} Let $\ell$ be the repulsive potential as defined in \eqref{hardpot} and assume that there exists $(\rho_N)\in \Prob(\Ob)$ such that $\limsup_{N\to\infty} F_N(\rho_N) < +\infty$. Then the parameter $\kappa$ defined in \eqref{def:kappa} satisfies the inequality $\kappa \leq \gamma_d \abs{\O}$. 
Conversely, if the previous  inequality is strict, then  $\limsup_{N\to \infty} \inf \widetilde{F_N} < +\infty$. 
\end{lemma}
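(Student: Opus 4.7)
The plan rests on the following dichotomy: with $\ell$ given by \eqref{hardpot}, both $c_N^{\varepsilon_N}$ and therefore $F_N$ take only values in $\gra{0,+\infty}$. Hence $F_N(\rho)<+\infty$ is equivalent to $F_N(\rho)=0$, which is in turn equivalent to the existence of $P\in\Pi_N(\rho)$ concentrated on the admissible set $A_N \eqdef \gra{(x_1,\dotsc,x_N)\in \Ob^N \st \abs{x_i-x_j}\ge \varepsilon_N,\ i\neq j}$. The whole lemma will then reduce to counting points in $\Ob$ with pairwise distance $\ge \varepsilon_N$ and applying the asymptotic \eqref{gamma-d} of \autoref{packinglemma}, legitimate since $\Omega$ is a bounded Lipschitz domain and therefore satisfies $\abs{\partial\Omega}=0$.

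For the first assertion, the hypothesis $\limsup_N F_N(\rho_N)<+\infty$ forces $A_N\neq \emptyset$ for all $N$ large. Extracting any $(x_1^N,\dotsc,x_N^N)\in A_N$ produces $N$ points in $\Ob$ at mutual distance at least $\varepsilon_N$, so by the very definition of $n_{\varepsilon_N}(\Ob)$ one has $N \le n_{\varepsilon_N}(\Ob)$. Multiplying by $\varepsilon_N^d$ and invoking $\varepsilon_N^d n_{\varepsilon_N}(\Ob)\to \gamma_d\abs{\Omega}$ together with $N\varepsilon_N^d\to\kappa$ yields $\kappa\le \gamma_d\abs{\Omega}$.

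For the converse direction, assume the strict inequality $\kappa<\gamma_d\abs{\Omega}$. Then $N/n_{\varepsilon_N}(\Ob) \to \kappa/(\gamma_d\abs{\Omega}) < 1$, and hence $N < n_{\varepsilon_N}(\Ob)$ for all $N$ large enough. I would then pick $N$ points $x_1^N,\dotsc,x_N^N$ inside an optimal packing realising $n_{\varepsilon_N}(\Ob)$ and set $\rho_N \eqdef \mu_{\Sigma_N}$ with $\Sigma_N=\gra{x_1^N,\dotsc,x_N^N}\subset\Ob$. The symmetrization of $\delta_{(x_1^N,\dotsc,x_N^N)}$ then lies in $\Pi_N(\rho_N)$ and is supported in $A_N$, giving $\widetilde{F_N}(\rho_N)=F_N(\rho_N)=0$ and hence $\inf\widetilde{F_N}=0$ for all $N$ large, which proves the claim. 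No single step seems to be a real obstacle: the content of the lemma essentially repackages the asymptotic counting estimate of \autoref{packinglemma} together with the $\gra{0,+\infty}$ structure of $F_N$ inherited from the hard-sphere potential.
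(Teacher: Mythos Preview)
Your proof is correct and follows essentially the same approach as the paper: reduce the finiteness of $F_N(\rho_N)$ to the nonemptiness of $A_N$, deduce $N\le n_{\varepsilon_N}(\Ob)$, and pass to the limit via \autoref{packinglemma}; the converse is handled identically by exhibiting an empirical measure on $N$ well-separated points. The only cosmetic difference is that the paper invokes the dual formula \eqref{dualFN} (with $v=0$) to extract the $N$-point configuration, whereas you argue directly in the primal; your explicit remark that $\abs{\partial\Omega}=0$ for a Lipschitz domain is a useful justification that the paper leaves implicit.
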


\begin{proof}
Since $F_N$ is an indicator function, the finiteness of $F_N(\rho_N)$ implies that $\inf F_N= -F_N^*(0) =0$  which in view of \eqref{dualFN} is equivalent to the existence of a subset $\Sigma \subset \Ob$ such that $\sharp(\Sigma)=N$ and $|x-y| \ge \e_N$ for all $(x,y) \in \Sigma^2\setminus\Delta.$ Therefore it is necessary that $N\le n_{\e_N} (\Ob)$ for $N$ large. By applying \eqref{gamma-d} to $A=\Ob$ and to the sequence $\e_N$, we get $n_{\e_N}(\Ob) \sim \e_N^{-d} \gamma_d |\O|$ as $N\to \infty$, thus concluding to the desired inequality since, with the help of \eqref{def:kappa}, we have
\[ \lim_{N\to \infty} \frac{N}{n_{\e_N}(\Ob)} = \frac{\kappa}{\gamma_d |\O|} \leq 1. \]

Conversely, if $\frac {\kappa}{\gamma_d |\O|} < 1$, then the inequality  $N \le n_{\e_N}(\Ob)$ holds for large $N$, ensuring the existence of a set $\Sigma\subset \Ob$ such that $\sharp(\Sigma)=N$ and $\widetilde{F_N}(\mu_\Sigma) = 0$.
\end{proof}

\begin{proof}[Proof of \autoref{micro-macro}] Since $F_N \le \widetilde{F_N}$, it is enough to establish the $\Gamma$-$\liminf$ inequality for $(F_N)$ and the $\Gamma$-$\limsup$ inequality for $(\widetilde{F_N})$. We proceed in two steps:

\med
{\bf Step 1} ({\em $\Gamma$-$\liminf$ inequality}).\ As $F_N$ is an indicator function, proving the $\Gamma$-$\liminf$ inequality amounts to show that, for any sequence $(\rho_N)$ such that $F_N(\rho_N) = 0$ and $\rho_N \wconv \rho$ in $\Prob(\Ob)$, it holds $\rho \in \mathcal{K}$.
Let $\f \in C^0_+(\Ob)$. Then, by applying the Fenchel inequality, we have: 
\begin{equation}\label{Fenchel}
 \bra{v, \rho} = \lim_{N\to \infty} \bra{v, \rho_N} \leq \limsup_{N\to \infty} F_N^*(v).
\end{equation}
By selecting an optimal subset $\Sigma_N \subset \Ob$ in \eqref{dualFN}, we deduce that
\[ F_N^*(v) = \bra{v, \mu_{\Sigma_N}} \ =\ \int_0^{+\infty} \frac{\sharp \left( \gra{v > t} \cap \Sigma_N \right)}{N} dt, \]
where in the last  equality we used the fact that $\mu_{\Sigma_N}(\gra{v > t}) = \frac{\sharp \left(\gra{v > t} \cap \Sigma_N \right)}{N}$.

Next we invoke the definition of the set function $n_{\e_N}$ to infer that:
\[ F_N^*(v) \leq \int_0^{+\infty} \frac{n_{\e_N} \left(\gra{v > t} \right)}{N} dt. \] 
By observing that, for a.e. $t \in [0, \sup v]$, the open subset $\gra{v > t}$ is non empty with a Lebesgue negligible boundary, we apply \eqref{gamma-d} to pass to the limit $N\to\infty$ in the previous equality with the help of dominated convergence:
\begin{ieee*}{rCl}
\limsup_{N \to \infty} F_N^*(v) & \leq & \lim_{N\to\infty} \int_0^{+\infty} \frac{ n_{\e_N} \left(\gra{v > t} \right)}{N} dt \\
& = & \int_0^{\sup v}  \frac{\gamma_d}{\kappa} |\gra{v > t}| dt = \frac{\gamma_d}{\kappa} \int_\O v(x) dx.
\end{ieee*}

Therefore, from \eqref{Fenchel}, we see that the inequality $\bra{v, \rho} \leq \frac{\gamma_d}{\kappa} \int_\O v(x) dx $ holds for every
non negative continuous test function $v$. It follows that $\rho$ is an absolutely continuous probability on $\Ob$ with a density $u$ such that $u \leq \frac{\gamma_d}{\kappa}$ a.e. This implies that $\rho\in \mathcal{K}$ since $\frac{\gamma_d}{\kappa} = \frac{1}{\theta |\O|}.$

\med
{\bf Step 2. $\Gamma$-$\limsup$ inequality}  We need to show that $\mathcal{K} \subset \mathcal{K}_\infty$ where
\[ \mathcal{K}_\infty \eqdef \gra{\rho \in \Prob(\Ob) \st \exists \Sigma_N\subset \Ob, F_N(\mu_{\Sigma_N}) = 0, \mu_{\Sigma_N} \wconv \rho}. \]
Since $\mathcal{K}_\infty$ is a closed subset of $\Prob(\Ob)$ equipped with the tight convergence, it is enough to 
show that $\mathcal{K}_0 \subset \mathcal{K}_\infty$ being $\mathcal{K}_0$ a dense subset of $\mathcal{K}$, namely (see \autoref{K0} below):
\begin{equation}\label{stepwise}
\mathcal{K}_0 = \gra{\rho\in \Prob(\Ob)\ \st\  \rho = \left(\sum_{i\in I} t_i \One_{A_i}\right) dx\ ,\ t_i\le u^*}, 
\end{equation}
where $u^*:= \frac1{\theta |\O|}$ and $\{A_i, i\in I\}$ is a finite family of disjoint open subsets such that 
$A_i\subset\subset \O$ and $|\partial A_i| = 0$.

Let $\rho \in \mathcal{K}_0$ be in the form above and denote $\omega = \cup_{i\in I} A_i$, where $I = \gra{1, \dotsc, K}.$
By setting $A_0 = \Omega\setminus\ov{\omega}$, we obtain a partition $\cup_{i=0}^K A_i$ of full measure in $\Omega$. 
Note that the condition
$\sum_{i\in I} t_i |A_i| =1$ with $t_i\le u^*$ implies that $1 \le u^* |\omega|$. Thus the volume ratio of $\omega$ 
satisfies $\frac{|\omega|}{|\O|} \geq \theta$ (recall that $\theta < 1$).

%
Let us  fix a parameter $\eta < 1$ that  ultimately will be sent to $1$. Then, for every $i\in I$, we set: 
\begin{equation}\label{delta-i}
\d_i \eqdef \left(\frac{ u^*}{\eta t_i}\right)^{1/d}, \quad  \e_{i,N} = \d_i \e_N \quad \text{(thus $\e_{i,N}>\e_n$)}\, .
\end{equation}

According to \eqref{gamma-d}, for any $i\in I$, there exists a set $\Sigma_{i,N} \subset A_i$ such that:
\[ \bigcup_{x\in \Sigma_{i,N}} B(x,\e_{i,N}) \subset A_i, \quad \sharp(\Sigma_{i,N}) = \tilde{n}_{\e_{i,N}}(A_i) \sim \frac{\gamma_d}{\e_N^d \d_i^d} |A_i|, \quad \mu_{\Sigma_{i,N}} \wconv \frac{\One_{A_i}}{|A_i|}. \]
Then we set $\Sigma'_N: = \cup_{i\in I} \Sigma_{i,N}$ and, taking into account \eqref{def:kappa}, \eqref{delta-i} and the equality $\sum_{i\in I} t_i |A_i| = 1$, we obtain:
\[ \lim_{N\to \infty} \frac{\sharp(\Sigma'_N)}{N} = \eta, \quad \frac{\sum_{x\in \Sigma'_N} \delta_x}{N} \wconv \eta \rho. \]

Since $\eta<1$, we know that $\sharp(\Sigma'_N) < N$ for large $N$ and we need completing $\Sigma'_N$ with a subset $\Sigma_{0,N}\subset A_0$ such that $\sharp(\Sigma_{0,N})= N - \sharp(\Sigma'_N)$ and $\{ B(x,\e_N) \st x\in \Sigma_{0,N}\}$ is a family of disjoints balls in $A_0$. This requires that  $N - \sharp(\Sigma'_N) \le \tilde{n}_{\e_N}(A_0)$. This condition  is indeed satisfied  for $\eta$ close to $1$ and large $N$ since, by \eqref{gamma-d}, it holds $ \lim_{N\to \infty} \frac{\sharp(\Sigma_{0,N})}{N}= 1-\eta$ while $ \lim_{N\to \infty} \frac{\tilde{n}_{\e_N}(A_0)}{N} = u^* |A_0|$. Possibly passing to a subsequence, we may assume that $\mu_{\Sigma_{0,N}} \wconv \rho_{0,\eta}$ where $\rho_{0,\eta} \in \Prob(\ov{A_0})$.

Summarizing we have obtained a set $\Sigma_N = \Sigma'_N \cup \Sigma_{0,N}$ such that $\sharp(\Sigma_N) = N$ and $\mu_{\Sigma_N} = (1 - \eta) \mu_{\Sigma'_N} + \eta \mu_{\Sigma_{0,N}}$. In addition $\mu_{\Sigma_N}$  is admissible since by construction
 $\{B(x,\e_n) \st x\in \Sigma_N\}$ is a family of disjoint balls contained in $\Omega$. Therefore the weak limit of $\mu_{\Sigma_N}$ as $N\to\infty$ namely $\rho_\eta \eqdef \eta \rho + (1-\eta) \rho_{0,\eta}$ belongs to $\mathcal{K}_\infty$. Since this is true for any $\eta<1$ close to $1$, by sending $\eta\to 1$ we conclude that $\rho\in \mathcal{K}_\infty$.
\end{proof}

\begin{lemma} \label{K0}
	Assume that $\theta$ given by \eqref{def:theta} satisfies $\theta <1$ and let $\mathcal{K}_0$ be the subset of $\mathcal{K}$ defined in \eqref{stepwise}. Then $\mathcal{K}_0$ is weakly* dense in $\mathcal{K}$.
\end{lemma}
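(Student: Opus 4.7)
The plan is to prove density of $\mathcal{K}_0$ in $\mathcal{K}$ by a two-step approximation exploiting the strict inequality $\theta < 1$, which gives slack between the pointwise bound $u^*$ and the average density $1/|\Omega|$. Let $\rho = u\,dx\in \mathcal{K}$, so $0\le u\le u^*$ and $\int u = 1$.

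\textbf{Step 1: Reduction to a strict bound.} First I would mollify $u$ by forming the convex combination
\[ u_\lambda \eqdef (1-\lambda) u + \lambda\, \frac{\mathbf{1}_\Omega}{|\Omega|}, \qquad \lambda\in(0,1). \]
Since $\theta <1$, we have $1/|\Omega| = \theta\, u^* < u^*$, so $u_\lambda \le c_\lambda \eqdef (1-\lambda) u^* + \lambda/|\Omega| < u^*$ almost everywhere, while $\int u_\lambda = 1$. As $\lambda\to 0^+$, $u_\lambda\to u$ in $L^1(\Omega)$, hence $u_\lambda\,dx \wconv \rho$. Thus it suffices to approximate each $u_\lambda$ with the strict pointwise bound $c_\lambda < u^*$.

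\textbf{Step 2: Step-function approximation on an interior dyadic grid.} For $\delta>0$, partition $\R^d$ by the half-open dyadic cubes of side $\delta$, and keep only the collection $\mathcal{Q}_\delta^{\text{in}}$ of those cubes $Q$ whose closure lies in $\Omega$ (a finite family since $\Omega$ is bounded). Set
\[ t_Q \eqdef \frac{1}{|Q|}\int_Q u_\lambda\,dx \le c_\lambda, \qquad  u_{\delta,\lambda} \eqdef \sum_{Q\in \mathcal{Q}_\delta^{\text{in}}} t_Q\, \mathbf{1}_{\mathring Q}. \]
Since $\Omega$ is Lipschitz, the remainder $\Omega\setminus \bigcup_{Q\in \mathcal{Q}_\delta^{\text{in}}} Q$ has Lebesgue measure $\to 0$ as $\delta\to 0$; therefore $m_{\delta,\lambda} \eqdef \int u_{\delta,\lambda} \to 1$ and the martingale-type approximation $u_{\delta,\lambda}\to u_\lambda$ holds in $L^1(\Omega)$.

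\textbf{Step 3: Normalisation and diagonal extraction.} To restore unit mass, I would rescale: set $\tilde u_{\delta,\lambda} \eqdef m_{\delta,\lambda}^{-1}\, u_{\delta,\lambda}$. Because $c_\lambda < u^*$ and $m_{\delta,\lambda}\to 1$, for $\delta$ small enough we still have $m_{\delta,\lambda}^{-1} c_\lambda \le u^*$, so the values $m_{\delta,\lambda}^{-1} t_Q$ respect the bound. The measure $\tilde \rho_{\delta,\lambda}:=\tilde u_{\delta,\lambda}\,dx$ then lies in $\mathcal{K}_0$: its support is the disjoint finite union of open cubes $\mathring Q$, each compactly contained in $\Omega$, each with $|\partial \mathring Q| = 0$. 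Moreover $\tilde u_{\delta,\lambda}\to u_\lambda$ in $L^1$ as $\delta\to 0$, hence $\tilde\rho_{\delta,\lambda}\wconv u_\lambda dx$. Since the weak* topology on the tight set $\Prob(\Ob)$ is metrizable, a diagonal extraction $\delta_n\to 0$, $\lambda_n\to 0$ yields a sequence in $\mathcal{K}_0$ weak* converging to $\rho$.

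The only subtlety is the interplay near $\partial\Omega$: the cubes intersecting the boundary are discarded, which causes a controlled mass defect $1-m_{\delta,\lambda}$. The strict inequality $\theta<1$ (Step 1) is precisely what provides enough slack in the pointwise bound to absorb the renormalisation factor $m_{\delta,\lambda}^{-1}>1$ without violating $\le u^*$; if one only had $\theta = 1$, this redistribution would be impossible and $\mathcal{K}_0$ would not be dense in $\mathcal{K}$.
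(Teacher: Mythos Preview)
Your proof is correct and takes a genuinely different route from the paper's. The paper proceeds in three stages: it first treats densities $u\in C(\O,[0,u^*])$ with $\supp u\Subset\O$, approximating them by step functions on the level sets $A_i=\{t_{i-1}<u<t_i\}$; then it invokes convolution to reduce general compactly supported elements of $\mathcal{K}$ to continuous ones; finally it handles the boundary by the explicit construction $u_n=\One_{K_n}(u+s_n(u^*-u))$, where the slack $\theta<1$ enters through the positivity of $\int_\O(u^*-u)$.

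Your argument is more streamlined: you use the slack \emph{first}, via the convex combination with the uniform density, to force a strict bound $c_\lambda<u^*$; then a single dyadic-cube averaging produces the step function, and a scalar renormalisation fixes the mass defect from discarding boundary cubes. This avoids both the level-set construction and the mollification step, and the cubes automatically satisfy $A_i\Subset\O$ and $|\partial A_i|=0$. The trade-off is that the paper's level-set method would work verbatim for any open $\O$ with $|\partial\O|=0$, whereas your Step~2 needs $|\O\setminus\bigcup_{Q\in\mathcal Q_\delta^{\mathrm{in}}}Q|\to 0$, which you correctly secure from the Lipschitz assumption. Both proofs hinge on exactly the same idea---that $\theta<1$ provides room to redistribute mass---but yours packages it more economically.
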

  
\begin{proof} Let us consider first $\rho\in \mathcal{K}$ of the form $\rho = u dx$ where $u\in C(\O, [0,u^*]) $
 ($u^*= \frac1{\theta |\O|}$) and $\supp u\subset\subset\O$. Given $\d>0$, we select an increasing sequence of real numbers $t_0 = 0 < t_1 < \dots t_{k-1} < t_k = u^*$ such that $t_{i}-t_{i-1}\le \d$ and $|\{u=t_i\}|=0$ for all $i\ge 1$; then we set $\O_i= \gra{u > t_i}$ for $i\ge 0$ and $A_i = \O_{i-1} \setminus \ov{\O_i}$ for $1\le i\le k$.
	
We see that the $A_i$'s are non empty disjoint open subsets of $\O$ such that $\cup_{i=1}^k A_i$ is of full Lebesgue measure in $\O_0 = \gra{u > 0}$ and we have $\Omega_0 \Subset \O$ by our assumption on the support of $u$. Therefore we obtain an element $\rho_\delta = u_\delta dx \in \mathcal{K}_0$ by setting:
\[ u_\delta \eqdef \sum_{i=1}^k u_i \One_{A_i} \quad \text{where} \quad u_i = \frac{1}{|A_i|} \int_{A_i} u(x) dx. \]
As $|u_\d - u| \leq \d$, we clearly have $\rho_\d \wconv \rho$ hence $\rho \in \ov{\mathcal{K}_0}$.

By using the classical convolution approximation argument (which preserves the integral constraint $\int_\O u(x) dx = 1$ and the inequality $u \le u^*$), we can deduce that the same conclusion applies to all $\rho \in \mathcal{K}$ with compact support in $\O$.
The last step consists in showing that any element $\rho = u\ dx$ in $\mathcal{K}$ can be approximated by a sequence $\rho_n = u_n dx$ in $\mathcal{K}$
such that $\supp u_n \Subset \O$. Thanks to the assumption $u^* |\O| > 1$, the non negative function $u^*-u$ has a positive integral over $\O$.
Then we may take a non decreasing sequence of compact sets $K_n$ such that $\O= \cup_n K_n$ and consider the following sequence:
\[ u_n = \One_{K_n} (u + s_n (u^*-u))\quad \text{where}\quad s_n = \frac{\int_{\O\setminus K_n} u dx}{\int_{K_n} (u^* - u) dx}. \]

As $\lim_n \int_{K_n} (u^* - u_n) dx = \int_{\O} (u^*-u) dx > 0$, we have $s_n \to 0$ and  therefore $u_n \le u^*$ for large $n$. In addition $\int u_n dx = 1$ while $\supp u_n\Subset\O$ by construction. As $u_n\to u$ in $L^1(\O)$, we infer that $u \in \ov{\mathcal{K}_0}$.
\end{proof}

\bigskip

\appendix

\section{Tools and notation from convex analysis}\label{A}

Let $X$ be a topological vector space, and $f \colon X \to (-\infty, +\infty]$. The \emph{lower semi-continuous envelope} of $f$, denoted $\overline{f}$ is the greatest lower semi-continuous function below $f$, \ie,
\[ \overline{f}(x) = \sup \gra{g(x) \st g \leq f, g \ \text{ is l.s.c.}}  = \inf_{x_n \to x} \{\liminf_{n \to \infty} f(x_n)\}. \]

The \emph{convex hull} of $f$, denoted by $\cl f$, is the largest convex lower semi-continuous  function below $f$. It may be defined as the function whose epigraph is the closed convex hull of the epigraph of $f$ in $X \times \overline{\R}$. Notice that $f$ is lower semi-continuous iff $\ov{f}=f$ and that if $f$ is convex then  $\cl f = \ov{f}$. 

\noindent The \emph{Legendre-Fenchel conjugate} of $f$, denoted by $f^*$, is defined on $X^*$ as
\[ f^*(v) = \sup \gra{\bra{v,x} - f(x) \st x \in X}, \]
while its biconjugate , denoted  by $f^{**}$, is given by
\[ f^{**}(x) = \sup \gra{\bra{v,x} - f^*(v) \st v \in X^*}. \]
We list some well-known properties (see for instance \cite{bouchitte2006, zalinescu2002convex}) 
\begin{align} \label{prop-Fenchel} \begin{cases} 
(a)& \text{$f^*$ is convex and lower semi-continuous;} \\
(b)& \text{ if $f \leq g$, then $f^* \geq g^*$;  }  \\ 
(c)& \text{if  $\{f^*<+\infty\}$ is non empty, then $f^{**} = \cl f$ ;}\\
(d)& \text{$(\ov{f})^*= f^*$ and  $(\ov{f})^{**} = f^{**}$;}\\
(e)& \text{if $f$ is convex, then $\ov{f} =  f^{**}$.}\\
 \end{cases}
 \end{align}


Let $f \colon X \to (-\infty, +\infty]$ be a convex l.s.c. function and $x_0\in X$ such that  $f(x_0)<+\infty.$  
Then, for every $z\in X$, the map $t \mapsto \frac1{t} (f(x_0+tz)-f(x_0))$ is non decreasing on $(0,+\infty)$. We define  the {\em recession function} of $f$ as follows: 
$$ f_\infty(z) :=\sup_{t>0} \frac{f(x_0+tz)-f(x_0)}{t} = \lim_{t\to \infty} \frac{f(x_0+tz)-f(x_0)}{t}. $$ 
The next result shows that the definition above does not depend of $x_0$. 
\renewcommand{\thelemma}{\empty{}} 
  \begin{lemma}  $f_\infty$  coincides with the support function of the subset ${\rm dom} \, f^* := \{v\in X^* \st f^*(v)<+\infty\}$, that is:
\begin{equation}\label{recession}
 f_\infty(z) \ =\ \sup \{ \bra{v,x} \st v\in  {\rm dom} \, f^* \}.
\end{equation}
 Accordingly $f_\infty$ is convex l.s.c. and positively one-homogeneous on $X$.
\end{lemma}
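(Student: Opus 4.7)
The plan is to exploit the biconjugate representation $f=f^{**}$, which holds because $f$ is convex, lower semicontinuous, and proper at $x_0$. Writing
\[ f(x_0+tz)=\sup_{v\in X^*}\bigl\{\langle v,x_0+tz\rangle-f^*(v)\bigr\}=\sup_{v\in\operatorname{dom} f^*}\bigl\{\langle v,x_0+tz\rangle-f^*(v)\bigr\}, \]
the difference quotient rewrites as
\[ \frac{f(x_0+tz)-f(x_0)}{t}=\sup_{v\in\operatorname{dom} f^*}\Bigl\{\langle v,z\rangle+\tfrac{1}{t}\bigl(\langle v,x_0\rangle-f^*(v)-f(x_0)\bigr)\Bigr\}. \]
This identity is the central object; everything follows from estimating both sides of it.

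For the upper bound, I would use the Fenchel inequality $f(x_0)\ge\langle v,x_0\rangle-f^*(v)$, which makes the additive correction $\tfrac{1}{t}(\langle v,x_0\rangle-f^*(v)-f(x_0))$ non-positive for every $v\in\operatorname{dom} f^*$. Therefore each term in the supremum is bounded above by $\langle v,z\rangle$, giving
\[ \frac{f(x_0+tz)-f(x_0)}{t}\le\sup_{v\in\operatorname{dom} f^*}\langle v,z\rangle, \]
and passing to the limit $t\to+\infty$ yields $f_\infty(z)\le\sup\{\langle v,z\rangle:v\in\operatorname{dom} f^*\}$.

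For the matching lower bound, I would fix an arbitrary $v\in\operatorname{dom} f^*$ and simply drop the supremum in the identity above, keeping the single corresponding term. Since $\langle v,x_0\rangle-f^*(v)-f(x_0)$ is a fixed real number, dividing by $t$ and letting $t\to+\infty$ gives $f_\infty(z)\ge\langle v,z\rangle$. Taking the supremum over $v\in\operatorname{dom} f^*$ gives the reverse inequality and hence the claimed equality \eqref{recession}. The last assertion of the lemma is then automatic: the right-hand side of \eqref{recession} is by definition the support function of the (non-empty) set $\operatorname{dom} f^*$, so it is convex, lower semicontinuous, and positively $1$-homogeneous as a pointwise supremum of continuous linear functionals.

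I do not expect any serious obstacle here; the only subtlety to flag is the standing hypothesis that $\operatorname{dom} f^*$ is non-empty, which is exactly the assumption used (via (c) in \eqref{prop-Fenchel}) to guarantee $f=f^{**}$. If $\operatorname{dom} f^*=\emptyset$ the right-hand side of \eqref{recession} is $-\infty$, while $f$ itself is not proper in the convex sense, so the statement is vacuous in that degenerate case.
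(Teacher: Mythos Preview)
Your proof is correct. The paper states this lemma without proof in its appendix as a standard tool from convex analysis, so there is no paper proof to compare against; your argument via the biconjugate representation $f=f^{**}$ and the Fenchel inequality is the natural one and goes through cleanly. One minor remark: the non-emptiness of $\operatorname{dom} f^*$ is not an extra standing hypothesis here but a consequence of $f$ being convex, l.s.c., and proper (you have $f(x_0)<+\infty$ and $f>-\infty$ by assumption), so the caveat in your last paragraph is unnecessary.
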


\section{Some basic results in \texorpdfstring{$\Gamma$}{Gamma}-convergence theory}  \ For an extensive study  we refer
to  the monographs  \cite{attouch1984variational,braides2002gamma, dal2012introduction}. Recall that a sequence of functionals $(F_n)$ defined on a topological space $X$ is said to be $\Gamma-$ convergent to  $F: X\mapsto (-\infty, +\infty]$ if the two following conditions hold:
 \begin{enumerate}[i)]
\item \ ($\Gamma-\liminf$) \ If $x_n\to x$ in $X$, then $\liminf_{n\to\infty} F_n(x_n)\ge F(x)$.
\item ($\Gamma-\limsup$) \ For all $x\in X$, there is a recovering sequence $(x_n)$ in $X$ such that $x_n\to x$ and 
$\limsup_{n\to\infty} F_n(x_n)\le F(x)$.
\end{enumerate}

%
It is easy to show that if $F_n \Gconv F$, then $F$ is l.s.c on $X$ . Furhermore we have the following equivalence:
$$ F_N \Gconv F  \iff   \overline{F_n}  \Gconv F . $$
As a consequence, one can readily check that, if $(F_n)$ is monotone non decreasing, then we have 
$F_N \Gconv F$ where  $F = \sup_n  \ov{F_n}.$

\medskip
In addition we will use the two following results:
 \begin{prop}[{\cite[Proposition 5.9]{dal2012introduction}}] \label{prop:pointwise-gamma}
	Let $X$ be a separable Banach space, $(G_n)_{n\in\N}$ a sequence of equi-Lipschitz functionals on $X$. Then:
$$ G_n \Gconv G  \iff  G_n(x)\to G(x) \ ,\ \forall x\in X.$$
\end{prop}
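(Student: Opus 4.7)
The plan is to exploit equi-Lipschitzness to reduce both $\Gamma$-inequalities to the pointwise statement evaluated at the limit point. Denote by $L$ a common Lipschitz constant for all the $G_n$.

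\textbf{From $\Gamma$-convergence to pointwise convergence.} I would first test the $\Gamma$-liminf inequality on the constant sequence $x_n\equiv x$, which immediately yields $\liminf_n G_n(x) \ge G(x)$. For the reverse, I would pick a recovery sequence $y_n\to x$ with $\limsup_n G_n(y_n)\le G(x)$ and use the uniform Lipschitz bound
\[
|G_n(x)-G_n(y_n)| \le L\,\|x-y_n\| \longrightarrow 0,
\]
which gives $\limsup_n G_n(x)\le G(x)$. Combining the two bounds yields $G_n(x)\to G(x)$.

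\textbf{From pointwise convergence to $\Gamma$-convergence.} For the $\Gamma$-limsup inequality, I would again use the constant sequence $x_n\equiv x$, for which pointwise convergence directly gives $\lim_n G_n(x)=G(x)$, producing an admissible recovery sequence. For the $\Gamma$-liminf inequality, given an arbitrary sequence $x_n\to x$, I would write
\[
G_n(x_n) \ge G_n(x) - L\,\|x_n-x\|,
\]
so that passing to the liminf and using pointwise convergence $G_n(x)\to G(x)$ gives $\liminf_n G_n(x_n)\ge G(x)$, as required.

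\textbf{Where the difficulty (if any) lies.} There is no real obstacle: the whole proof is driven by the single observation that an equi-Lipschitz family cannot distinguish between a convergent sequence and its limit point, so both $\Gamma$-inequalities collapse to the pointwise statement at $x$. The separability of $X$ is not actually needed for the equivalence itself; it would only enter if one wanted complementary statements such as the metrizability of $\Gamma$-convergence or the extraction of $\Gamma$-convergent subsequences from equi-bounded-from-below equi-Lipschitz families. The limit $G$ is automatically $L$-Lipschitz, which is a harmless byproduct of the argument.
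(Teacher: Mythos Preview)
Your proof is correct and is the standard argument. Note, however, that the paper does not supply its own proof of this proposition: it merely cites \cite[Proposition 5.9]{dal2012introduction}, so there is nothing to compare against beyond confirming that your argument is sound.
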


\begin{prop} \label{prop:attouch-dual}
	Let $X$ be a separable Banach space  and $X^*$ its dual endowed with the weak* topology. Let $(F_n)_{n \in \N}$ be a sequence of convex   functionals  on $X^*$ satisfying the equicoercivity condition
	$$  \sup F_n(\rho_n) < +\infty \ \Rightarrow  \ \sup_n \norm{ \rho_n}_{X^*} <+\infty. $$
 Then the following  equivalence hold:
$$  F_N \Gconv F \quad \text{in $X^*$}\ \iff\  F_N^* \Gconv F^* \quad \text{in $X$} .$$
\end{prop}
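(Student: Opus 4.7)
The plan is to exploit Fenchel--Moreau duality in both directions, using that each $F_n$ is convex and weakly* lower semicontinuous on $X^*$ (hence satisfies the biconjugate identity $F_n^{**}=F_n$), and similarly $F=F^{**}$. The engine of the argument is the Fenchel inequality $F_n(\rho)+F_n^*(v)\ge\bra{v,\rho}$ combined with the observation that along a weakly* convergent sequence $\rho_n\wconv\rho$ with $(\norm{\rho_n}_{X^*})$ bounded---which equicoercivity guarantees as soon as $F_n(\rho_n)$ is bounded---and a strongly convergent $v_n\to v$ in $X$, the pairing is continuous: $\bra{v_n,\rho_n}\to\bra{v,\rho}$.

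For the direct implication, assume $F_n\Gconv F$ in $(X^*,\mathrm{weak}^*)$. To establish the $\Gamma$-$\liminf$ of $F_n^*$ at $v$, given $v_n\to v$ in the norm of $X$ and $\rho\in X^*$ with $F(\rho)<+\infty$, I would apply the $\Gamma$-$\limsup$ for $F_n$ to obtain $\rho_n\wconv\rho$ with $\limsup_n F_n(\rho_n)\le F(\rho)$. Equicoercivity bounds the $\rho_n$, and Fenchel yields $\liminf_n F_n^*(v_n)\ge\bra{v,\rho}-F(\rho)$; the supremum over $\rho$ then gives $\liminf_n F_n^*(v_n)\ge F^*(v)$. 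For the $\Gamma$-$\limsup$, I would test with the constant sequence $v_n=v$ and argue by contradiction: if $F_{n_k}^*(v)>F^*(v)+\eps$ along a subsequence, pick near-maximizers $\rho_k$ in the Fenchel supremum defining $F_{n_k}^*(v)$; equicoercivity bounds $(\rho_k)$ in $X^*$, so a subsequence converges weakly* to some $\rho_\infty$. The $\Gamma$-$\liminf$ of $F_n$ at $\rho_\infty$ combined with Fenchel then yields the chain
$$
F^*(v)\ \ge\ \bra{v,\rho_\infty}-F(\rho_\infty)\ \ge\ \limsup_j\bigl(\bra{v,\rho_{k_j}}-F_{n_{k_j}}(\rho_{k_j})\bigr)\ \ge\ F^*(v)+\tfrac{\eps}{2},
$$
a contradiction. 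The converse implication is carried out by the symmetric scheme, exchanging the roles of $(X,\norm{\cdot})$ and $(X^*,\mathrm{weak}^*)$; the biconjugate identity $F_n=F_n^{**}$ is used to realize $F_n(\rho)$ as a supremum over $v\in X$ and so to import the assumed strong $\Gamma$-convergence of $F_n^*$.

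The main obstacle is the boundedness of the near-maximizers appearing in the Fenchel (or biconjugate) supremum, needed to extract a cluster point and close the $\Gamma$-$\limsup$ inequality by contradiction. On the $X^*$-side this is exactly what the stated equicoercivity delivers; on the $X$-side it follows because equicoercivity of the $F_n$ on $X^*$ transfers via conjugation into local equi-boundedness of the $F_n^*$ on $X$, so that the biconjugate supremum $F_n(\rho)=\sup_v(\bra{v,\rho}-F_n^*(v))$ is essentially realized on a bounded subset of $X$ where the assumed strong $\Gamma$-convergence can be invoked. In the applications of this paper the functionals in play, namely $C_N$ and $\ov{C_N}$, have effective domain contained once and for all in the weakly* compact set $\Prob_-(\R^d)$, which makes these compactness steps automatic.
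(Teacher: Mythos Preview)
The paper does not prove this proposition: its ``proof'' consists of a reference to \cite[Theorem 3.11]{attouch1984variational} for the reflexive case and to Az\'e \cite{aze1986convergence} for the non-reflexive separable setting. Your proposal therefore supplies an argument where the authors chose to cite, so the two cannot really be compared as proofs.

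That said, your sketch has genuine gaps in the $\Gamma$-$\limsup$ steps. In the forward direction you claim that the near-maximizers $\rho_k$ of $\bra{v,\cdot}-F_{n_k}(\cdot)$ are bounded ``by equicoercivity'', but the stated hypothesis only controls \emph{sublevel sets} of the $F_n$; from $\bra{v,\rho_k}-F_{n_k}(\rho_k)>F^*(v)+\eps/2$ one cannot conclude that $F_{n_k}(\rho_k)$ stays bounded, since the linear term $\bra{v,\rho_k}$ may diverge with $\norm{\rho_k}$ at a comparable rate (take $F_n(\rho)=\norm{\rho}$ and $v$ on the unit sphere). Some extra input---superlinear growth, or $v$ in the interior of the domain of $F^*$, or the convexity in a more refined way---is needed here. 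The converse implication is more delicate still: your scheme produces near-maximizers $v_k$ in $X$, and even if they are bounded this does not yield a \emph{strongly} convergent subsequence in an infinite-dimensional $X$, so the contradiction cannot be closed as written.

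You do flag the compactness obstacle in your last paragraph and correctly observe that in the paper's applications the effective domains of the $F_N$ lie once and for all in the weakly* compact set $\Prob_-(\R^d)$, which makes the extraction on the $X^*$ side automatic. In fact, in that specific setting the monotonicity $\ov{C_N}\nearrow C_\infty$ (with each $\ov{C_N}$ l.s.c.) already delivers the $\Gamma$-convergence directly, so the full strength of the cited duality theorem is not actually needed there.
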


\begin{proof}
	See \cite[Theorem 3.11]{attouch1984variational} in the reflexive case and D. Az\'e \cite{aze1986convergence}
 for a generalization to the non-reflexive case . \end{proof}

\bibliographystyle{plain}
\bibliography{biblio.bib}

\begin{thebibliography}{10}

\bibitem{attouch1984variational}
H.~Attouch.
\newblock {\em Variational convergence for functions and operators}.
\newblock Applicable Mathematics Series. Pitman (Advanced Publishing Program),
  Boston, MA, 1984.

\bibitem{aze1986convergence}
D.~Az\'{e}.
\newblock Convergence des variables duales dans des probl\`emes de transmission
  \`a travers des couches minces par des m\'{e}thodes d'\'{e}pi-convergence.
\newblock {\em Ricerche Mat.}, 35(1):125--159, 1986.

\bibitem{bouchitte2006}
G.~Bouchitt\'{e}.
\newblock {\em Convex Analysis and Duality}.
\newblock Encyclopedia of Mathematical physics. Academic Press, 2006.

\bibitem{bouchitte2020relaxed}
Guy Bouchitt{\'e}, Giuseppe Buttazzo, Thierry Champion, and Luigi De~Pascale.
\newblock Relaxed multi-marginal costs and quantization effects.
\newblock In {\em Annales de l'Institut Henri Poincar{\'e} C, Analyse non
  lin{\'e}aire}. Elsevier, 2020.

\bibitem{braides2002gamma}
Andrea Braides et~al.
\newblock {\em Gamma-convergence for Beginners}, volume~22.
\newblock Clarendon Press, 2002.

\bibitem{Brandts2020}
Jan Brandts, Sergey Korotov, and Michal K\v{r}\'{\i}\v{z}ek.
\newblock {\em Simplicial partitions with applications to the finite element
  method}.
\newblock Springer Monographs in Mathematics. Springer, Cham, [2020] \copyright
  2020.

\bibitem{buttazzo2018continuity}
Giuseppe Buttazzo, Thierry Champion, and Luigi De~Pascale.
\newblock Continuity and estimates for multimarginal optimal transportation
  problems with singular costs.
\newblock {\em Applied Mathematics \& Optimization}, 78(1):185--200, 2018.

\bibitem{carillo}
J.~A. Carrillo, J.~Mateu, M.~G. Mora, L.~Rondi, L.~Scardia, and J.~Verdera.
\newblock The equilibrium measure for an anisotropic nonlocal energy.
\newblock {\em Calc. Var. Partial Differential Equations}, 60(3):Paper No. 109,
  28, 2021.

\bibitem{choquet1958diametre}
Gustave Choquet.
\newblock Diam\`etre transfini et comparaison de diverses capacit\'es.
\newblock {\em S\'eminaire Brelot-Choquet-Deny. Th\'eorie du potentiel}, 3,
  1958-1959.
\newblock talk:4.

\bibitem{conway1999sphere}
J.~H. Conway and N.~J.~A. Sloane.
\newblock {\em Sphere packings, lattices and groups}, volume 290 of {\em
  Grundlehren der Mathematischen Wissenschaften [Fundamental Principles of
  Mathematical Sciences]}.
\newblock Springer-Verlag, New York, third edition, 1999.
\newblock With additional contributions by E. Bannai, R. E. Borcherds, J.
  Leech, S. P. Norton, A. M. Odlyzko, R. A. Parker, L. Queen and B. B. Venkov.

\bibitem{cotar2015infinite}
Codina Cotar, Gero Friesecke, and Brendan Pass.
\newblock Infinite-body optimal transport with coulomb cost.
\newblock {\em Calculus of Variations and Partial Differential Equations},
  54(1):717--742, 2015.

\bibitem{cotar2019next}
Codina Cotar and Mircea Petrache.
\newblock Next-order asymptotic expansion for n-marginal optimal transport with
  coulomb and riesz costs.
\newblock {\em Advances in Mathematics}, 344:137--233, 2019.

\bibitem{dal2012introduction}
Gianni Dal~Maso.
\newblock {\em An introduction to $\Gamma$-convergence}, volume~8.
\newblock Springer Science \& Business Media, 2012.

\bibitem{diaconis1980finite}
Persi Diaconis and David Freedman.
\newblock Finite exchangeable sequences.
\newblock {\em The Annals of Probability}, pages 745--764, 1980.

\bibitem{Frostman}
Otto Frostman.
\newblock Potentiel d'\'equilibre et capacit\'e des ensembles avec quelques
  applications \`a la th\'eorie des functions,lund.
\newblock {\em Doctoral thesis}, 115 s., 1935.

\bibitem{SL}
Thomas Lebl\'{e} and Sylvia Serfaty.
\newblock Large deviation principle for empirical fields of log and {R}iesz
  gases.
\newblock {\em Invent. Math.}, 210(3):645--757, 2017.

\bibitem{licht2002global}
Christian Licht and G\'erard Michaille.
\newblock Global-local subadditive ergodic theorems and application to
  homogenization in elasticity.
\newblock {\em Annales Math\'ematiques Blaise Pascal}, 9(1):21--62, 2002.

\bibitem{Lieb-Simon77}
Elliott~H. Lieb and Barry Simon.
\newblock The {T}homas-{F}ermi theory of atoms, molecules and solids.
\newblock {\em Advances in Math.}, 23(1):22--116, 1977.

\bibitem{dimarino2022grandcanonical}
Simone~Di Marino, Mathieu Lewin, and Luca Nenna.
\newblock Grand-canonical optimal transport.
\newblock {\em arXiv.2201.06859}, 2022.

\bibitem{maury2018congested}
Bertrand Maury.
\newblock Congested transport and microscopic and macroscopic scales.
\newblock In {\em European {C}ongress of {M}athematics}, pages 427--442. Eur.
  Math. Soc., Z\"{u}rich, 2018.

\bibitem{morse1947perfect}
Anthony~P. Morse.
\newblock Perfect blankets.
\newblock {\em Trans. Amer. Math. Soc.}, 61:418--442, 1947.

\bibitem{pass2013optimal}
Brendan Pass.
\newblock Optimal transportation with infinitely many marginals.
\newblock {\em Journal of Functional Analysis}, 264(4):947--963, 2013.

\bibitem{petrache2017next}
Mircea Petrache and Sylvia Serfaty.
\newblock Next order asymptotics and renormalized energy for {R}iesz
  interactions.
\newblock {\em J. Inst. Math. Jussieu}, 16(3):501--569, 2017.

\bibitem{Serfaty2015}
Sylvia Serfaty.
\newblock {\em Coulomb gases and {G}inzburg-{L}andau vortices}.
\newblock Zurich Lectures in Advanced Mathematics. European Mathematical
  Society (EMS), Z\"{u}rich, 2015.

\bibitem{serfaty2018systems}
Sylvia Serfaty.
\newblock Systems of points with {C}oulomb interactions.
\newblock In {\em Proceedings of the {I}nternational {C}ongress of
  {M}athematicians---{R}io de {J}aneiro 2018. {V}ol. {I}. {P}lenary lectures},
  pages 935--977. World Sci. Publ., Hackensack, NJ, 2018.

\bibitem{torquato2006packing}
Monica Skoge, Aleksandar Donev, Frank~H. Stillinger, and Salvatore Torquato.
\newblock Packing hyperspheres in high-dimensional {E}uclidean spaces.
\newblock {\em Phys. Rev. E (3)}, 74(4):041127, 11, 2006.

\bibitem{zalinescu2002convex}
Constantin Zalinescu.
\newblock {\em Convex analysis in general vector spaces}.
\newblock World scientific, 2002.

\end{thebibliography}

\end{document}